\providecommand\@dotsep{5}
\def\listtodoname{List of Todos}
\def\listoftodos{\@starttoc{tdo}\listtodoname}
\newtheorem {theorem}{Theorem}[section]
\newtheorem {lemma}[theorem]{Lemma}
\newtheorem {corollary}[theorem]{Corollary}
\newtheorem {definition}[theorem]{Definition}
\theoremstyle{remark}
\newtheorem {remark}[theorem]{Remark}
\newtheorem {example}[theorem]{Example}
\DeclareMathOperator{\gr}{grad}
\newcommand\Z{\mathbb{Z}}
\newcommand\Q{\mathbb{Q}}
\def\Hp{\mathcal{H}}
\newcommand \bJ {\bar{J}}
\newcommand \bh {\mkern3mu \overline{\mkern-3mu H \mkern-1mu} \mkern1mu}
\newcommand \bHp {\overline{\Hp}}
\newcommand\Ta{\mathbb{T}_\alpha}
\newcommand\Tb{\mathbb{T}_\beta}
\def\Sym{\mathrm{Sym}}
\def\spinc {{\operatorname{spin^c}}}
\def\fin\qedhere
\def\pr {{\text{pr}}}
\DeclareMathOperator{\Hom}{Hom}
\def\from {{\leftarrow}}
\def\s{\mathfrak s}
\newcommand\alphas{\boldsymbol\alpha}
\newcommand\betas{\boldsymbol\beta}
\def\bs{\bar{\mathfrak{\s}}}
\def\gr{\mathrm{gr}}
\def\ff {{\mathbb{F}}}
\def\fin\qedhere
\def\from {{\leftarrow}}
\def\ccdot {\! \cdot \!}
\newcommand{\bunderline}[1]{\underline{#1\mkern-2mu}\mkern2mu }
\def\du {\bar{d}}
\def\dl {\bunderline{d}}
\def\Hm{\mathbb{H}^-}
\def\CF {\mathit{CF}}
\def\HF {\mathit{HF}}
\newcommand\CFhat{\widehat{\CF}}
\newcommand \CFp {\CF^+}
\newcommand \CFm {\CF^-}
\newcommand \HFm {\HF^-}
\newcommand \CFinf {\CF^{\infty}}
\newcommand \CFo {\CF^{\circ}}
\newcommand \HFo {\HF^{\circ}}
\def\CFI {\mathit{CFI}}
\def\HFI {\mathit{HFI}}
\newcommand \CFIm {\CFI^-}
\newcommand \HFIm {\HFI^-}
\newcommand \CFIo {\CFI^{\circ}}
\newcommand \HFIo {\HFI^{\circ}}
\def\inv{\iota}
\def\Inv{\mathfrak{I}}
\newcommand{\co}{\nobreak\mskip2mu\mathpunct{}\nonscript
\mkern-\thinmuskip{:}\penalty300\mskip6muplus1mu\relax}
\def\rp{\mathbb{RP}}
\def\Vert{\operatorname{Vert}}
\def\co{\colon\thinspace}
\begin{document}

\title{On homology cobordism and local equivalence between plumbed manifolds}

\author[Irving Dai]{Irving Dai}
\author[Matthew Stoffregen]{Matthew Stoffregen}
\thanks{ID was partially supported by NSF grant DGE-1148900.}
\thanks{MS was partially supported by NSF grant DMS-1702532.}

\address{Department of Mathematics, Princeton University, Princeton, NJ 08540}
\email{idai@math.princeton.edu}

\address {Department of Mathematics, Massachusetts Institute of Technology, Cambridge, MA 02142}
\email{mstoff@mit.edu}

\begin{abstract}
We establish a structural understanding of the involutive Heegaard Floer homology for all linear combinations of almost-rational (AR) plumbed three-manifolds. We use this to show that the Neumann-Siebenmann invariant is a homology cobordism invariant for all linear combinations of AR plumbed homology spheres. As a corollary, we prove that if $Y$ is a linear combination of AR plumbed homology spheres with $\mu(Y) = 1$, then $Y$ is not torsion in the homology cobordism group. A general computation of the involutive Heegaard Floer correction terms for these spaces is also included.
\end{abstract}

\keywords{Involutive Floer homology, homology cobordism}

\subjclass[2010]{57R58, 57M27}

\maketitle

\section{Introduction and Results}
\label{sec:1}
The aim of the present work is to investigate the involutive Heegaard Floer homology of a certain class of plumbed three-manifolds, as a means for understanding their span in the homology cobordism group. The application of Floer theory to the problem of homology cobordism is well-known and has an established history in the literature; see e.g.\ \cite{FSinstanton}, \cite{FurutaHom}, \cite{Froy}, \cite{AbsGraded}, \cite{Triangulations}.  In this paper, we build on the set-up of \cite{DM}, which uses the involutive Floer package developed by Hendricks-Manolescu \cite{HMinvolutive} and Hendricks-Manolescu-Zemke \cite{HMZ} to study the family of almost-rational plumbed three-manifolds. (We refer to these as AR plumbed manifolds, for short.) This family, which includes all Seifert fibered rational homology spheres with base orbifold $S^2$, is defined via placing certain combinatorial constraints on the usual three-dimensional plumbing construction; see Section~\ref{sec:2.2}. In \cite{DM}, the involutive Floer homology of the class of AR plumbed manifolds was computed and a start was made on understanding connected sums of such manifolds in a restricted range of cases. 

Our main result (see Theorem~\ref{thm:1.1} below) provides a structural understanding of the involutive Floer homology for all linear combinations of AR plumbed manifolds. We use this to derive a nontorsion result for all linear combinations $Y$ of AR plumbed homology spheres with Rokhlin invariant $\mu(Y) = 1$. We are also able to relate the Neumann-Siebenmann invariant to the involutive Floer homology (in the case of linear combinations of AR manifolds), and carry out a general computation of the involutive Floer correction terms $\du$ and $\dl$ for these spaces.

\subsection{Overview and Motivation} \label{sec:1.1} \hspace*{\fill} \\
\indent
Historically, the homology cobordism group $\Theta_{\mathbb{Z}}^3$ has occupied a central place in the development of three- and four-manifold topology. Several gauge-theoretic invariants have been used to derive results involving $\Theta_{\mathbb{Z}}^3$, including a proof that $\Theta_{\mathbb{Z}}^3$ has a $\mathbb{Z}^\infty$ subgroup (originally due to Furuta \cite{FurutaHom} and Fintushel-Stern \cite{FSinstanton}). In turn, the structure of $\Theta_{\mathbb{Z}}^3$ has turned out to be connected to several questions in classical topology, especially concerning the triangulability of high-dimensional topological manifolds. Following this program, in \cite{Triangulations} Manolescu disproved the triangulation conjecture by ruling out 2-torsion in $\Theta_{\mathbb{Z}}^3$ with Rohklin invariant one. This was established via the introduction of a new gauge-theoretic invariant, called Pin(2)-equivariant Seiberg-Witten Floer homology, along with a related suite of homology cobordism invariants $\alpha$, $\beta$, and $\gamma$. (See also work of Lin \cite{Lin}.) Despite these advances, many problems involving the structure of $\Theta_{\mathbb{Z}}^3$ remain open. One particularly obvious question to ask is whether $\Theta_{\mathbb{Z}}^3$ contains any torsion. In light of Manolescu's result, it is also natural to ask this in the more restricted setting of having Rokhlin invariant one, in which case there is no 2-torsion. For further results in this direction, see work of Saveliev \cite{Sav} and Lin-Ruberman-Saveliev \cite{LRS}.

Recently, work by the second author has involved understanding the Pin(2)-equivariant Seiberg-Witten Floer homology of Seifert fibered spaces (see \cite{Stoffregen}, \cite{Stoffregen2}).  These Floer homologies have explicit algebraic models which make them amenable to computation; and, in addition, one can attempt to identify classical invariants such as the Neumann-Siebenmann invariant (defined in \cite{Neu}, \cite{Sieb}) for such spaces in terms of their Floer homology (see \cite[Conjecture 4.1]{Triangulations}). Using the formulation of Pin(2)-equivariant monopole Floer homology by Lin \cite{Lin}, many of these results can be extended to the class of AR plumbed manifolds by utilizing combinatorial techniques inspired by the work of N\'emethi on lattice homology \cite{Nem}. (See work by the first author in \cite{Dai}.) Although this gives a good understanding of the Pin(2)-homology of individual AR manifolds, a general description of the Pin(2)-homology of connected sums is more difficult. See \cite{Stoffregen2} for results in this direction. In the present paper, we will study the subgroups of $\Theta^3_\Z$ generated by Seifert spaces and AR plumbed manifolds; these will be denoted by $\Theta_{SF}$ and $\Theta_{AR}$, respectively. 

On the symplectic geometry side, in \cite{HMinvolutive} Hendricks and Manolescu defined a new three-manifold invariant called involutive Heegaard Floer homology. This is a modification of the usual Heegaard Floer homology of Ozsv\'ath and Szab\'o, taking into account the conjugation action $\iota$ on the Heegaard Floer complex coming from interchanging the $\alpha$- and $\beta$-curves. More precisely, given a rational homology sphere $Y$ equipped with a self-conjugate $\spinc$-structure $\s$, one can associate to the pair $(Y, \s)$ an algebraic object called an $\inv$-complex, from which one constructs a well-defined three-manifold invariant $\HFIm(Y, \s)$.\footnote{Involutive Floer homology is defined for all three-manifolds $Y$, but in this paper we will only need the case where $Y$ is a rational homology sphere.} (There are analogous constructions for the other three flavors of Heegaard Floer homology.) This is a module over $\ff[U, Q]/(Q^2)$, where $\ff = \Z/2\Z$ and the degrees of $U$ and $Q$ are $-2$ and $-1$, respectively. We also have the involutive Floer correction terms $\du$ and $\dl$, which are the analogues of the $d$-invariant in Heegaard Floer homology. These are homology cobordism invariants, but are not additive under connected sum. See Section~\ref{sec:2.1} for a review of involutive Floer homology. 

In \cite{HMZ}, Hendricks, Manolescu, and Zemke constructed an abelian group $\Inv_{\Q}$, consisting of all possible $\inv$-complexes up to an algebraic equivalence relation called local equivalence.\footnote{In \cite{HMZ}, only the case of integer homology spheres was considered, but the construction is essentially the same. See Section~\ref{sec:2.1}.} This notion is modeled on the relation of homology cobordism, in the sense that if two (integer) homology spheres are homology cobordant, then their $\inv$-complexes are locally equivalent. The group operation on $\Inv_{\Q}$ is given by tensor product. If $Y$ is a rational homology sphere equipped with a self-conjugate $\spinc$-structure $\s$, then taking the local equivalence class of the (grading-shifted) $\inv$-complex of $(Y, \s)$ gives an element of $\Inv_{\Q}$, which we denote by $h(Y, \s)$:
\[
(Y, \s) \mapsto h(Y, \s) \in \Inv_{\Q}.
\]
In \cite{HMZ} it was shown that $h$ takes connected sums to tensor products, and hence that restricting to the case of integer homology spheres yields a homomorphism
\[
h\co \Theta_\mathbb{Z}^3 \rightarrow \Inv_{\Q}.
\]
This is the analogue in the involutive Floer setting of the chain local equivalence group $\mathfrak{CLE}$, defined by the second author in \cite{Stoffregen} for Pin(2)-homology. We can thus attempt to study $\Theta_\mathbb{Z}^3$ by understanding the structure of the group $\Inv_{\Q}$ and the image of the map $h$. One should think of $\Inv_{\Q}$ as capturing all of the information contained in the involutive Floer homology, from the point of view of homology cobordism. Note that we can further restrict $h$ to the subgroups $\Theta_{SF}$ and $\Theta_{AR}$ of $\Theta_{\mathbb{Z}}^3$. See Section~\ref{sec:2.1} for a more precise discussion of local equivalence and the construction of $\Inv_{\Q}$. 

As in the Pin(2)-case, one can use the lattice homology construction of Ozsv\'ath-Szab\'o \cite{Plumbed} and N\'emethi \cite{Nem} to determine the involutive Heegaard Floer homology of the class of AR manifolds. This was carried out by the first author and Manolescu in \cite{DM}. In this paper, we use the algebraic model developed in \cite{DM} to complete the analysis of $h(\Theta_{SF})$ and $h(\Theta_{AR})$. In addition to providing a structural understanding of the involutive Floer homology for all linear combinations of AR manifolds, this will allow us to derive some applications about the subgroups $\Theta_{SF}$ and $\Theta_{AR}$.

\subsection{Statement of Results} \label{sec:1.2} \hspace*{\fill} \\
\indent
Consider the Brieskorn homology spheres $\Sigma(p, 2p-1, 2p+1)$ for $p \geq 3$ odd. In \cite{Stoffregen2} it was shown that these are linearly independent in $\Theta_{\mathbb{Z}}^3$ by using the Manolescu correction terms $\alpha, \beta,$ and $\gamma$. An analogous argument was given in \cite{DM} using the involutive Floer homology. For convenience, we re-parameterize slightly and denote by $Y_i$ the local equivalence class
\[
Y_i = h(\Sigma(2i+1, 4i+1, 4i+3))
\]
for $i \geq 1$.\footnote{This is a slight abuse of notation from e.g.\ \cite{Stoffregen2}, \cite{DM}, where $Y_p$ is used to denote $\Sigma(p, 2p-1, 2p+1)$.} (Here, we suppress writing the $\spinc$-structure, since in the case of an integer homology sphere the $\spinc$-structure is unique.) A schematic picture of the $Y_i$ is given in Figure~\ref{fig:1}. Our main result is that these classes in fact form a basis for $h(\Theta_{AR})$:
\begin{theorem}
\label{thm:1.1}
We have
\[
h(\Theta_{SF}) = h(\Theta_{AR}) \cong \mathbb{Z}^\infty.
\]
An explicit basis may be given by (the image of) the Poincar\'e homology sphere, together with (the images of) the Brieskorn homology spheres $\Sigma(p, 2p-1, 2p+1)$ for $p \geq 3$ odd.  
\end{theorem}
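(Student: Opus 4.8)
The plan is to reduce Theorem~\ref{thm:1.1} to two assertions about the group $\Inv_{\Q}$ and then address each in turn. Since $h$ is a homomorphism sending connected sums to tensor products \cite{HMZ}, the group $h(\Theta_{AR})$ is precisely the subgroup of $\Inv_{\Q}$ generated, under $\otimes$ and inversion, by the local equivalence classes $h(\Sigma)$ as $\Sigma$ ranges over AR plumbed homology spheres, and likewise for $h(\Theta_{SF})$. Since every Seifert fibered homology sphere is AR, the containment $h(\Theta_{SF}) \subseteq h(\Theta_{AR})$ is automatic, so it suffices to prove:
\begin{enumerate}
\item[(a)] \emph{(Spanning.)} for every AR plumbed homology sphere $\Sigma$, the class $h(\Sigma)$ lies in the $\Z$-span inside $\Inv_{\Q}$ (written additively) of $\{h(\Sigma(2,3,5))\}\cup\{Y_i : i\geq 1\}$; and
\item[(b)] \emph{(Independence.)} the set $\{h(\Sigma(2,3,5))\}\cup\{Y_i : i\geq 1\}$ is linearly independent in $\Inv_{\Q}$.
\end{enumerate}
Granting (a) and (b), the subgroup $h(\Theta_{AR})$ equals this $\Z$-span, which is free of countably infinite rank; and since $\Sigma(2,3,5)$ and each $\Sigma(2i+1,4i+1,4i+3)$ is Seifert fibered, that span is contained in $h(\Theta_{SF})$. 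Hence $h(\Theta_{AR}) = h(\Theta_{SF}) \cong \Z^\infty$, with the asserted basis.

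Assertion (b) is the easier half. The $d$-invariant of the underlying $\ff[U]$-complex is a local equivalence invariant and is additive under $\otimes$, so it descends to a group homomorphism $\Inv_{\Q}\to\Q$; it sends $h(\Sigma(2,3,5))\mapsto 2$ and every $Y_i\mapsto 0$. Thus $h(\Sigma(2,3,5))$ has infinite order and the cyclic subgroup it generates meets $\Span_{\Z}\{Y_i\}$ trivially, so it remains only to recall that the $Y_i$ themselves are linearly independent in $\Inv_{\Q}$: this is exactly the involutive computation of \cite{DM} (parallel to the Pin(2) argument of \cite{Stoffregen2}), which separates the $Y_i$ using suitable homology cobordism functionals extracted from $\HFIm$.

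Assertion (a) is the heart of the matter, and the step I expect to be the main obstacle. The approach is to normalize $h(\Sigma)$ via the lattice-homology model of \cite{DM}: the $\iota$-complex of an AR plumbed homology sphere is built from a graded root equipped with a reflection-type involution, and the almost-rational hypothesis forces this root into a rigid shape. Up to local equivalence one should be able to prune the root to its essential skeleton and read off a ``standard'' $\iota$-complex encoded by finitely many integers --- a grading shift recording the $d$-invariant (this is where the Neumann--Siebenmann invariant enters), together with a symmetric tuple recording the heights of the surviving branches. The crux is then a purely algebraic classification: these standard $\iota$-complexes, together with their duals, are closed under $\otimes$ up to local equivalence, and the resulting abelian group is free with basis $\{h(\Sigma(2,3,5)), Y_1, Y_2, \dots\}$. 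Concretely, one must show that a graded root with several ``valleys'' can always be rewritten, after tensoring and passing to local equivalence, as a tensor product of the single-valley pieces realized by the $\Sigma(2i+1,4i+1,4i+3)$ and of grading-shift factors realized by copies of $\pm\Sigma(2,3,5)$. This is the full-strength version of the restricted connected-sum computations begun in \cite{DM}, and it will require developing enough normal-form machinery for tensor products of $\iota$-complexes --- in the spirit of \cite{HMZ} and \cite{Stoffregen} --- to control how two reflected graded roots interact under $\otimes$. Once (a) and (b) are established, Theorem~\ref{thm:1.1} follows as indicated above.
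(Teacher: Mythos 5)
Your reduction of the theorem to the two assertions (a) and (b) is the correct strategic frame, and your sketch of (a) correctly identifies the main objects (graded roots, pruning to a standard monotone form, decomposition into single-valley pieces and grading-shift factors under $\otimes$). However, there are two genuine problems.

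First, you do not actually prove (a), and (a) is the entire content of the theorem. You write that one ``must show that a graded root with several valleys can always be rewritten, after tensoring and passing to local equivalence, as a tensor product of the single-valley pieces \dots together with grading-shift factors,'' and that this ``will require developing enough normal-form machinery.'' That machinery is exactly what is missing. The paper's proof depends on a specific new technical device — a ``swap operation'' on pairs of monotone roots (Theorem~\ref{thm:3.5}), proved by a rather delicate geometric-complex argument — which feeds into an inductive decomposition (Theorem~\ref{thm:4.2}) showing that any monotone root $M(h_1,r_1;\dots;h_n,r_n)$ equals $\sum_i M(h_i,r_i) - \sum_i M(h_{i+1},r_i)$ in $\Inv_{\Q}$. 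Without some such relation (or an equivalent), there is no reason the subgroup generated by AR manifolds should be free on one-valley roots; tensor products of standard complexes are a priori much more complicated objects. So your (a) is a description of what needs to be done rather than a proof.

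Second, the proof you do give for (b) has a factual error. You claim the homomorphism $d\co\Inv_{\Q}\to\Q$ sends every $Y_i\mapsto 0$ and thereby separates $h(\Sigma(2,3,5))$ from $\Span_{\Z}\{Y_i\}$. But by Theorem~\ref{thm:4.4}, if $h(Y)=M(h_1,r_1;\dots;h_n,r_n)$ then $d(Y)=h_1$; since $Y_i=h(\Sigma(2i+1,4i+1,4i+3))=M(2i,0)$, one has $d(Y_i)=2i\neq 0$. The image of $\Span_{\Z}\{Y_i\}$ under $d$ is then all of $2\Z$, so $d$ does not separate $h(\Sigma(2,3,5))$ (which has $d=2$) from that span at all. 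The correct separation is the finer uniqueness statement Lemma~\ref{lem:4.3}, whose proof relies on the formula $\du(M)-\dl(M)=\max_i\tilde{\delta}(M_i)$ from \cite[Corollary 1.4]{DM}, applied after isolating positive and negative parts of a putative relation; the single tower $I_2$ has $\tilde{\delta}=0$ while $\tilde{\delta}(Y_i)=2i>0$, which is what actually does the separating. (Alternatively one can use the grading shift $\Delta_{\mathcal{Y}}$, but its well-definedness again amounts to Lemma~\ref{lem:4.3}.) Either way, the $d$-invariant argument as stated is wrong.
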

\noindent
There is nothing particularly special about the manifolds $\Sigma(p, 2p-1, 2p+1)$, except that they happen to realize a convenient set of $\inv$-complexes. The inclusion of $\Sigma(2, 3, 5)$ should be thought of as accounting for the possibility of an overall grading shift. Indeed, we have the following generalization of Theorem~\ref{thm:1.1}:

\begin{theorem}
\label{thm:1.2}
Let $Y$ be a linear combination of almost-rational plumbed three-manifolds, and let $\s$ be a self-conjugate $\spinc$-structure on $Y$. Then $h(Y, \s)$ is equal to a linear combination of the $Y_i$, up to a grading shift by some $\Delta \in \mathbb{Q}:$
\[
h(Y, \s) = \left( \sum_{i} c_iY_i \right)[\Delta].
\]
This expression is unique, in the sense that any such expansion for $h(Y, \s)$ must have the same $c_i$ and the same $\Delta$. Moreover, 
\[
\Delta = 2\bar{\mu}(Y, \s),
\]
where $\bar{\mu}(Y, \s)$ is the Neumann-Siebenmann invariant of $(Y, \s)$. In addition, the class $h(Y, \s)$ can be represented (up to orientation reversal) by an individual almost-rational plumbed manifold only if:
\begin{enumerate}
\item We have $c_i \in \{-1, 0, 1\}$ for all $i$, and 
\item The nonzero coefficients $c_i$ alternate in sign. More precisely, for each $c_i \neq 0$, we require that the next nonzero coefficient $c_j$ (with $j > i$) be given by $c_j = -c_i$.
\end{enumerate}
If $Y$ is to have the usual orientation, then the last nonzero coefficient must be $+1$; for $Y$ to have reversed orientation, the last nonzero coefficient must be $-1$.
\end{theorem}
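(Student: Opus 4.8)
The plan is to carry everything out inside the combinatorial model for $\inv$-complexes of AR manifolds from \cite{DM}, reducing each local equivalence class in sight to a normal form and then classifying these normal forms up to local equivalence.

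\textbf{Existence and uniqueness of the decomposition.} First I would recall from \cite{DM} that for an AR plumbed manifold $Y'$ with self-conjugate $\s'$ the class $h(Y', \s')$ is represented by a graded root carrying the conjugation involution $\inv$, and that any such $\inv$-complex is locally equivalent to a member of a normalized family indexed by a finite ``staircase'' --- the sequence of successive local maxima and minima of the grading function along the root. The key step is a \emph{merging lemma}: the tensor product of two standard-form $\inv$-complexes is again locally equivalent to a standard-form $\inv$-complex, with the new staircase obtained from the two old ones by an explicit combinatorial rule (a kind of interleaving with cancellation). Since any linear combination $Y$ of AR manifolds has $h(Y, \s) = \bigotimes_j h(Y'_j, \s'_j)^{\pm 1}$, the merging lemma shows $h(Y, \s)$ is itself a standard form, whether $Y$ is an integer or a rational homology sphere. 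A further, largely formal step identifies each standard-form class with a unique expression $\bigl(\sum_i c_i Y_i\bigr)[\Delta]$: existence by decomposing the staircase into its elementary ``V''-shaped pieces, which one checks are precisely the classes $Y_i$ up to grading shift; uniqueness from Theorem~\ref{thm:1.1}, which shows the $Y_i$ are linearly independent and independent of the grading-shift line $\{[\Delta] : \Delta \in \Q\}$ in $\Inv_\Q$ --- the extension past the even-integer shifts using that a nontrivial integral combination of the $Y_i$ has a branched graded root and so is not locally equivalent to a shifted tower $\Tower[\Delta]$.

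\textbf{The identification $\Delta = 2\bar{\mu}$.} Since grading shifts are additive under tensor product, $\Delta$ is additive under connected sum; the Neumann--Siebenmann invariant is additive as well, so it suffices to verify $\Delta = 2\bar{\mu}(Y', \s')$ when $Y'$ is a single AR plumbed manifold. For this I would use that the standard-form representative of $Y_i$ is centered at grading $0$ --- consistent with $\bar{\mu}(\Sigma(2i+1, 4i+1, 4i+3)) = 0$ --- together with the fact that, for an AR plumbing, the bottom grading of the associated graded root (equivalently, the shift $\Delta$ of its standard form) is computed from the plumbing lattice by N\'emethi's formula, which coincides with twice the lattice expression defining $\bar{\mu}$ via the Wu class. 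Reconciling these two conventions is the one place where a genuine computation is needed, but I expect it to reduce to bookkeeping once orientations and grading normalizations are fixed.

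\textbf{Realizability by a single AR manifold.} For the ``only if'' assertions I would characterize the image of the set of individual AR manifolds among the standard forms. The graded root of an honest AR manifold, with $\inv$ acting as the reflection exchanging its two halves, has a constrained profile: reading off its staircase and decomposing into elementary pieces shows that each $Y_i$ occurs with coefficient in $\{-1, 0, 1\}$ and that consecutive nonzero coefficients alternate in sign, since the grading function rises and falls alternately and each ``rise followed by fall'' contributes a single elementary block whose sign is fixed by the direction. The orientation statement then follows from the behavior of $h$ under orientation reversal, which dualizes the $\inv$-complex --- reversing the staircase and negating its last coefficient --- so that a class whose last nonzero coefficient is $+1$ (resp.\ $-1$) can be realized only by an AR manifold carrying the standard (resp.\ reversed) orientation. \textbf{Main obstacle.} I expect the merging lemma --- controlling how tensor products of graded-root $\inv$-complexes collapse back to standard form, with enough control over the resulting staircase --- to be the technical crux; granted it and the ensuing classification, the identification with $\bar{\mu}$ and the single-manifold characterization are essentially computations.
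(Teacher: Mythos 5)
Your high-level skeleton matches the paper's — reduce each AR manifold to a graded-root normal form, decompose into elementary $Y_i$-pieces, pin down $\Delta$ via $\bar{\mu}$ on single manifolds, and read off the realizability constraints from the shape of a single root — but the key technical step is misplaced, and the step you lean on is false as stated.

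The ``merging lemma'' you propose, namely that the tensor product of two monotone (staircase) roots is again locally equivalent to a monotone root with a combined staircase, is not true. A counterexample is already on the surface of the theorem you are proving: $Y_1 \otimes Y_2 = Y_1 + Y_2$, the connected sum of two genuine Brieskorn spheres, is precisely the kind of linear combination that the realizability clauses of Theorem~\ref{thm:1.2} rule out from being a single monotone root (the coefficients are both $+1$ and do not alternate in sign). More fundamentally, the tensor product of two one-dimensional geometric complexes is a two-dimensional cell complex, and the whole content of the paper's Section~\ref{sec:3} is that comparing such products to one-dimensional graded roots is genuinely hard. The paper never merges: instead it decomposes each individual monotone root \emph{first} into type-one roots (Theorem~\ref{thm:4.2}), so that after tensoring everything is automatically a linear combination of $Y_i$'s, with no re-normalization needed. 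What you label as ``a further, largely formal step'' — decomposing the staircase into its elementary ``V''-shaped pieces — is actually the technical heart of the proof; it requires the swap-operation local equivalence (Theorem~\ref{thm:3.5}), which is proved by a nontrivial geometric-complex argument rotating inner boxes and bridge cells.

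Two smaller points. First, invoking Theorem~\ref{thm:1.1} for uniqueness is mildly circular: in the paper, uniqueness (Lemma~\ref{lem:4.3}, which uses the formula $\du - \dl = \max_i \tilde{\delta}(M_i)$ from \cite{DM}) is established before, and is an input to, Theorem~\ref{thm:1.1}. Your alternative — that a nontrivial combination of $Y_i$'s is branched and hence not a shifted tower — only rules out equivalence with the trivial class; you still need an argument that distinguishes two different nontrivial combinations, which is exactly what the $\du - \dl$ computation supplies. Second, for $\Delta = 2\bar{\mu}$ the paper does not carry out the lattice computation itself but cites \cite[Section 8]{DM} (recorded here as Theorem~\ref{thm:4.4}) for the identity $r_n = -2\bar{\mu}$ in the parametrization $M(h_1, r_1; \ldots; h_n, r_n)$, combined with the observation that the $\mathcal{Y}$-family grading shift for a monotone root is $-r_n$; your plan to re-derive this from N\'emethi's formula and the Wu-class definition is feasible but heavier than what is needed once that input is granted.
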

\noindent
Note that in order for $h(Y, \s)$ to be zero in $\Inv_{\Q}$, both the coefficients $c_i$ and the grading shift $\Delta$ must be zero. In further sections, we will describe a convenient way of thinking about $\inv$-complexes and explain how to explicitly decompose the $\inv$-complex of a connected sum of AR manifolds as a linear combination of the $Y_i$. 

\begin{figure}[h!]
\center
\includegraphics[scale=1]{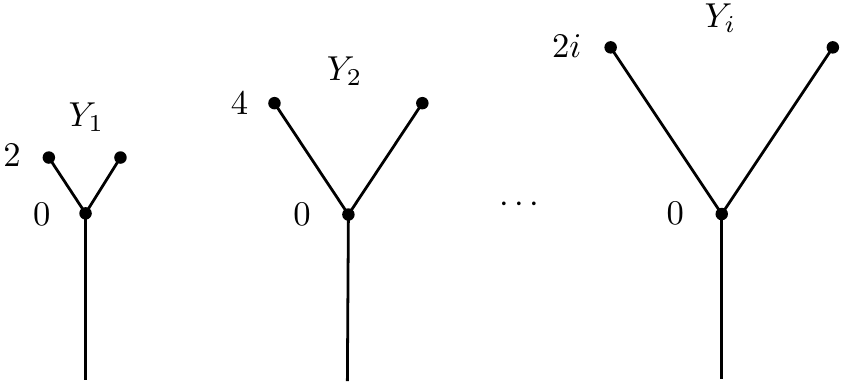}
\caption{Schematic picture of the $Y_i$. Here, $Y_i$ may be thought of as a particular submodule of the (grading-shifted) Heegaard Floer homology of $\Sigma(2i+1, 4i+1, 4i+3)$. See Section~\ref{sec:4} for details.}\label{fig:1}
\end{figure}

Theorem~\ref{thm:1.1} shows that the Neumann-Siebenmann invariant of $(Y, \s)$ can be read off from the local equivalence class of its involutive Floer homology, at least for linear combinations of AR manifolds. In particular, this implies that the Neumann-Siebenmann invariant restricted to $\Theta_{AR}$ is a homology cobordism invariant:

\begin{corollary}\label{cor:1.3}
The Neumann-Siebenmann invariant descends to a well-defined homomorphism $\bar{\mu}\co \Theta_{AR} \rightarrow \mathbb{Z}$.
\end{corollary}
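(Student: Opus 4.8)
The plan is to obtain Corollary~\ref{cor:1.3} as a formal consequence of Theorem~\ref{thm:1.2}, using in addition only the fact, recalled above, that $h$ is a homology cobordism invariant. I would begin by unwinding the statement: $\Theta_{AR}$ is by definition the subgroup of $\Theta_\Z^3$ generated by those AR plumbed manifolds that are integer homology spheres, so every class in $\Theta_{AR}$ is represented by some linear combination $Y$ of AR plumbed homology spheres. Such a $Y$ carries a unique $\spinc$ structure, which is automatically self-conjugate, and $\bar\mu(Y)$ is an integer; by Theorem~\ref{thm:1.2}, the class $h(Y)\in\Inv_\Q$ has a unique expansion $\bigl(\sum_i c_iY_i\bigr)[\Delta]$ with $\Delta=2\bar\mu(Y)$. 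I would then \emph{define} $\bar\mu$ on the homology cobordism class of $Y$ to be $\bar\mu(Y)=\tfrac12\Delta$.

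The main point to verify is that this is independent of the chosen representative. If $Y$ and $Y'$ are linear combinations of AR plumbed homology spheres that are homology cobordant, their $\inv$-complexes are locally equivalent, so $h(Y)=h(Y')$ as elements of $\Inv_\Q$; by the uniqueness clause of Theorem~\ref{thm:1.2}, the grading shift $\Delta$ in the expansion depends only on the element of $\Inv_\Q$, hence $\Delta$ agrees for $Y$ and $Y'$ and $\bar\mu(Y)=\bar\mu(Y')$. This shows $\bar\mu$ descends to a well-defined function $\bar\mu\co\Theta_{AR}\to\Q$, which is $\Z$-valued since each representative is an integer homology sphere. To see it is a homomorphism I would invoke the standard additivity of the Neumann--Siebenmann invariant under connected sum (which holds because it is computed from the signature and the square of the Wu class, both additive under boundary connected sum of plumbings), and note that, connected sum being the group operation on $\Theta_{AR}$, a $\Z$-valued function additive under $\#$ is automatically a group homomorphism. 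Alternatively, one can deduce the homomorphism property from the facts that $h\co\Theta_\Z^3\to\Inv_\Q$ is a homomorphism and that the grading shift of an $\inv$-complex is additive under tensor product.

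I do not expect a genuine obstacle: essentially all of the work is already contained in Theorem~\ref{thm:1.2}, whose proof supplies the identification $\Delta=2\bar\mu$. The single subtle point is that the Neumann--Siebenmann invariant is, \emph{a priori}, an invariant of plumbing data rather than of a homology cobordism class; but this is exactly what is resolved by writing $\bar\mu(Y)=\tfrac12\Delta$, since $\Delta$ depends only on the local equivalence class of the involutive Floer complex of $Y$, and therefore only on the homology cobordism class of $Y$.
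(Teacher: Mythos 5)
Your proposal is correct and follows essentially the same route as the paper: both deduce well-definedness from the uniqueness clause of Theorem~\ref{thm:1.2} together with the fact that $h(Y)$ is a homology cobordism invariant, and both obtain the homomorphism property from the additivity of $\bar{\mu}$ under connected sum (and its sign change under orientation reversal). The only difference is expository detail; no new ideas or gaps.
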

\begin{proof}
It is easy to check from the definitions that $\bar{\mu}$ changes sign under orientation reversal and is additive under connected sum. The local equivalence class $h(Y)$ is an invariant of homology cobordism, so the claim follows from the uniqueness part of Theorem~\ref{thm:1.2}.
\end{proof}
\noindent
This provides a partial proof of a conjecture by Neumann \cite{Neu}, which states that the Neumann-Siebenmann invariant is a homology cobordism invariant in general. Corollary~\ref{cor:1.3} may be viewed as a strengthening of e.g.\ \cite[Theorem 1.3]{Stoffregen}, \cite[Theorem 1.3]{Dai}, \cite[Theorem 1.2]{DM}.

Corollary~\ref{cor:1.3} also implies that if $Y$ is a linear combination of AR homology spheres with $\bar{\mu}(Y) \neq 0$, then $Y$ is not torsion in the homology cobordism group. We can phrase this in terms of the Rokhlin invariant $\mu$ instead, which has the advantage of being defined for all three-manifolds:

\begin{corollary}\label{cor:1.4}
Let $Y$ be homology cobordant to a linear combination of AR homology spheres, so that $[Y] \in \Theta_{AR}$. If $\mu(Y) = 1$, then $[Y]$ is not torsion in $\Theta_{\mathbb{Z}}^3$.
\end{corollary}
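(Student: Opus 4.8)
The plan is to combine Corollary~\ref{cor:1.3} with the classical fact that the Neumann-Siebenmann invariant is an integral lift of the Rokhlin invariant. Since $[Y] \in \Theta_{AR}$, first I would choose a genuine linear combination $Y'$ of AR homology spheres with $[Y'] = [Y]$; because the Rokhlin invariant $\mu$ is itself a homology cobordism invariant, $\mu(Y') = \mu(Y) = 1$. This reduction is what lets us work with a plumbed representative even though $Y$ need only be homology cobordant to one.

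Next I would recall that for each AR (indeed, for each plumbed) homology sphere one has the congruence $\bar{\mu} \equiv \mu \pmod 2$, and that both $\bar{\mu}$ and $\mu$ are additive under connected sum and change sign under orientation reversal. Hence $\bar{\mu}(Y') \equiv \mu(Y') \equiv 1 \pmod 2$, so $\bar{\mu}(Y')$ is an odd integer and in particular $\bar{\mu}(Y') \neq 0$.

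Finally, by Corollary~\ref{cor:1.3} the assignment $\bar{\mu}$ descends to a homomorphism $\bar{\mu} \co \Theta_{AR} \to \mathbb{Z}$, so $\bar{\mu}([Y]) = \bar{\mu}(Y') \neq 0$. A homomorphism to the torsion-free group $\mathbb{Z}$ sends torsion elements to $0$; since $\bar{\mu}([Y]) \neq 0$, the class $[Y]$ cannot be torsion in $\Theta_{\mathbb{Z}}^3$.

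The content here beyond bookkeeping is entirely packaged into Corollary~\ref{cor:1.3} (which itself rests on the uniqueness statement in Theorem~\ref{thm:1.2}); the congruence $\bar{\mu}\equiv\mu\pmod 2$ is a standard property of the Neumann-Siebenmann invariant and needs no new argument. Consequently there is no serious obstacle to this corollary: the only point requiring care is that $Y$ itself is not assumed plumbed, so one must genuinely invoke homology cobordism invariance of $\mu$ and the well-definedness of $\bar{\mu}$ on $\Theta_{AR}$ in order to transfer the parity computation to the representative $Y'$.
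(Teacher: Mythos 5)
Your argument is correct and matches the paper's: the paper's one-line proof ("The Neumann-Siebenmann invariant reduces to the Rokhlin invariant mod $2$") is exactly your key congruence $\bar\mu \equiv \mu \pmod 2$, combined with the homomorphism $\bar\mu\co\Theta_{AR}\to\Z$ from Corollary~\ref{cor:1.3}. You have simply spelled out the bookkeeping—passing to a plumbed representative $Y'$ and invoking torsion-freeness of $\Z$—that the paper leaves implicit.
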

\begin{proof}
The Neumann-Siebenmann invariant reduces to the Rokhlin invariant mod $2$.
\end{proof}
\noindent
More generally, if $Y$ is a manifold as above for which $h(Y)$ is nonzero, then Theorem~\ref{thm:1.1} evidently implies that the cobordism class $[Y]$ cannot be torsion in $\Theta_{\mathbb{Z}}^3$. Corollaries~\ref{cor:1.3} and \ref{cor:1.4} can be thought of as stating that if the Rokhlin invariant (or the Neumann-Siebenmann invariant) of a linear combination of AR manifolds is nontrivial, then $h(Y) \neq 0$.

In another direction, we can also use the decomposition of Theorem~\ref{thm:1.2} to carry out a computation of the involutive Floer correction terms $\du$ and $\dl$. Consider a linear combination in $\Inv_{\Q}$ of the form
\[
(Y_{s_1} + Y_{s_2} + \ldots + Y_{s_m}) - (Y_{t_1} + Y_{t_2} + \ldots + Y_{t_n}),
\]
where the $Y_i$ are the basis elements of Theorem~\ref{thm:1.2}. Without loss of generality, we assume that the $s_i$ are distinct from the $t_i$, and that $s_1 \geq s_2 \geq \cdots \geq s_m$ and $t_1 \geq t_2 \geq \cdots \geq t_n$. Then we have:

\begin{theorem}
\label{thm:1.5}
Let $Y$ be a linear combination of almost-rational plumbed three-manifolds, and let $\s$ be a self-conjugate $\spinc$-structure on $Y$. Suppose that $h(Y, \s)$ decomposes as a linear combination
\[
(Y_{s_1} + Y_{s_2} + \ldots + Y_{s_m}) - (Y_{t_1} + Y_{t_2} + \ldots + Y_{t_n})
\]
as above, shifted by some grading $\Delta$. Define the quantities
\begin{align*}
P_i &= 2\left(\sum_{j=1}^i t_j - \sum_{j=1}^i s_j\right) \text{ for } 0 \leq i \leq \min(m, n), \text{ and}\\
Q_i &= 2\left(\sum_{j=1}^i t_j - \sum_{j=1}^{i + 1} s_j \right) \text{ for } 0 \leq i \leq \min(m-1, n).
\end{align*}
Note that $P_0 = 0$ and $Q_0 = -2s_1$. Then
\begin{align*}
\dl(Y, \s) = d(Y, \s) + \max \{ &\min(P_0, Q_0), \\
&\min(P_0, P_1, Q_1), \\
&\ \ \ \ \ \ \ \ \ \ \ \ \ \vdots \\
&\min(P_0, P_1, P_2, \ldots, P_{\min(m,n)}, Q_{\min(m,n)})\},
\end{align*}
with the understanding that if $\min(m, n) = m$, the $Q_{\min(m, n)}$ in the last line should be deleted. \
\end{theorem}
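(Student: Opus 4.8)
The plan is to reduce the computation of $\dl$ to an explicit combinatorial extremal problem on the standard model of a sum of the $Y_i$. By Theorem~\ref{thm:1.2}, it suffices to work with a fixed representative of the local equivalence class, namely the tensor product $\inv$-complex $C = Y_{s_1} \otimes \cdots \otimes Y_{s_m} \otimes (\text{dual of } Y_{t_1}) \otimes \cdots \otimes (\text{dual of }Y_{t_n})$, shifted by $\Delta$; since $\dl$ and $d$ are both local equivalence invariants (after the grading shift), and since the grading shift $\Delta$ affects $\dl(Y,\s)$ and $d(Y,\s)$ equally, the claimed formula is equivalent to a purely algebraic identity about this model complex. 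First I would recall (from the earlier sections, which presumably give a concrete picture of the $Y_i$ as particular ``staircase-like'' submodules of the Heegaard Floer homology of the relevant Brieskorn spheres) an explicit chain-level description of each $Y_i$ and its conjugation action $\iota$, and of the duals $-Y_i$. The key structural input is that each $Y_i$ is built from a single ``box'' or ``tower-with-a-bump'' whose $\iota$-action shifts grading in a controlled way; the parameters $2s_j$ and $2t_j$ are exactly the grading gaps that govern where the $\iota$-invariant part of $\mathit{Cone}(Q(1+\iota))$ sits.

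The main computation is then to identify $\dl$ as the maximal grading of a nonzero element in a suitable homology group of the mapping cone $\mathit{CFI}^- = \mathit{Cone}(Q(1+\iota)\colon CF^-\to Q\cdot CF^-[-1])$ for the tensor-product complex, restricted to the ``bottom tower.'' Concretely, $\dl(Y,\s) - d(Y,\s)$ measures the maximal extra amount by which the distinguished tower in $\HFIm$ is pushed down relative to $\HFm$, and for a tensor product of $Y_i$'s and their duals this is dictated by how the various ``bumps'' interleave. I would set up coordinates so that tracing through $(1+\iota)$ on the tensor product corresponds to a walk that, at step $i$, can either ``pay'' to descend past the $i$-th positive generator (contributing $-2s_{i+1}$, hence the $Q_i$ terms) or ``be forced down'' by the accumulated mismatch between the first $i$ of the $t_j$'s and the first $i$ of the $s_j$'s (contributing the $P_i$ terms). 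The $\min$'s inside each line of the formula record that one must clear \emph{all} of the obstructions $P_0,\dots,P_k$ up to the point where one decides to stop, and the outer $\max$ records that $\dl$ takes the best such choice; the edge convention (deleting $Q_{\min(m,n)}$ when $\min(m,n)=m$) reflects that once all positive summands are exhausted there is no further $Q$-type move available. Making this precise amounts to an induction on $m+n$: tensoring with one more $Y_s$ or $-Y_t$ either prepends a new possible ``stopping point'' or shifts the accumulated mismatch, and one checks the recursion matches the stated nested $\max$--$\min$ expression.

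The step I expect to be the main obstacle is the bookkeeping in the inductive step: verifying that the homology of the mapping cone of the tensor product really is computed by this interleaving walk, and in particular that no ``cross terms'' in the tensor product $(1+\iota_1)\otimes 1 + \iota_1\otimes(1+\iota_2) + \cdots$ produce lower (or higher) classes than the diagonal contributions accounted for above. This requires a careful local-equivalence simplification of the tensor product complex — presumably reducing each $Y_i$ to a minimal model first, then showing the tensor product is locally equivalent to an explicit ``standard complex'' whose $\dl$ can be read off directly. Once that standard form is established (which I anticipate is the technical heart of the earlier sections and is invoked here), the extremal formula should follow by a direct, if somewhat intricate, grading count, and the analogous statement for $\du$ would follow by the symmetric argument (or by a duality relating $\du(Y,\s)$ to $-\dl(-Y,\s)$).
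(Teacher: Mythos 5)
Your proposal correctly identifies the reduction to a tensor-product model and the combinatorial ``walk'' interpretation of the nested $\max$--$\min$ expression, and the intuition you give for the meaning of the $P_i$ and $Q_i$ matches the paper's. But the plan for closing the argument has a genuine gap. You suggest an induction on $m+n$, checking that tensoring with one more $Y_s$ or dual updates the formula correctly. This would require controlling how $\dl$ of the cone changes under tensoring with a single factor, which is exactly the kind of statement that fails to be simple: $\dl$ is neither additive nor monotone under tensor product, so a clean recursion is not available, and you yourself flag this as ``the main obstacle'' without resolving it.

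The paper instead proceeds non-inductively. It first shows (Lemma~\ref{lem:5.1}) that the tensor product of the positively oriented type-one roots $Y_{s_1}\otimes\cdots\otimes Y_{s_m}$ is locally equivalent to a single explicit ``spherical complex'' $S(d,m;2s_1,\dots,2s_m)$, and similarly for the duals, reducing to a product $S_1\otimes S_2^\vee$. It then exhibits, by hand, explicit cycles $U_k$ and $L_k$ in the mapping cone that are all homologous to the preferred generators of the $\du$- and $\dl$-towers (Lemmas~\ref{lem:5.3} and~\ref{lem:5.4}); reading off their gradings yields \emph{lower} bounds for both $\du$ and $\dl$. The crucial step you do not address is how to obtain a matching \emph{upper} bound. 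The paper does this by duality: since $\dl(S_1\otimes S_2^\vee) = -\du(S_1^\vee\otimes S_2)$, the lower bound for $\du$ applied to the role-swapped complex becomes an upper bound for $\dl$. The two bounds look formally different, and it is a separate, purely combinatorial lemma (Lemma~\ref{lem:5.5}) that they agree. Without both the explicit cycle construction and this duality-plus-combinatorics step, your outline shows the formula is plausible but not that it is an equality.
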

\noindent
Note that if $m = 0$, then $\min(m, n) = m$, and the above expression reduces to
\[
\dl(Y, \s) = d(Y, \s) + \max \{ \min(P_0) \} = d(Y, \s). 
\]
Similarly, if $n = 0$ (but $m > 0$), then the above expression reduces to 
\[
\dl(Y, \s) = d(Y, \s) + \max \{ \min(P_0, Q_0) \} = d(Y, \s) - 2s_1.
\]
Compare with \cite[Corollary 1.4]{DM}.

Since orientation reversal corresponds to negation in $\Inv_{\Q}$, changing $Y$ to $-Y$ interchanges the index sets $\{s_i\}$ and $\{t_i\}$. Due to the fact that $\du(Y, \s) = -\dl(-Y, \s)$, Theorem~\ref{thm:1.5} can also be used to compute $\du(Y, \s)$. We will give some motivation for the terms appearing in the actual formula when we give the proof in Section~\ref{sec:5}.

Although the statement of Theorem~\ref{thm:1.5} is rather cumbersome, it is still possible to use it to derive some general facts about $\du$ and $\dl$. To this end, we have the following asymptotic characterization of the involutive Floer correction terms:
\begin{corollary}\label{cor:1.6}
Let $Y$ be a linear combination of almost-rational plumbed three-manifolds, and let $\s$ be a self-conjugate $\spinc$-structure on $Y$. Suppose that $h(Y, \s)$ is given by a linear combination
\[
(Y_{s_1} + Y_{s_2} + \ldots + Y_{s_m}) - (Y_{t_1} + Y_{t_2} + \ldots + Y_{t_n})
\]
as in the statement of Theorem~\ref{thm:1.5}. Then the involutive Floer correction terms of $\#_k(Y, \s)$ for $k$ sufficiently large are as follows. If $t_1 > s_1$, then for $k$ sufficiently large, we have
\begin{align*}
\du(\#_k(Y, \s)) &= k \cdot d(Y, \s) + 2t_1 \text{ and}\\
 \dl(\#_k(Y, \s)) &= k \cdot d(Y, \s).
\end{align*}
If $s_1 > t_1$, then for $k$ sufficiently large, we have
\begin{align*}
\du(\#_k(Y, \s)) &= k \cdot d(Y, \s) \text{ and}\\
 \dl(\#_k(Y, \s)) &= k \cdot d(Y, \s) - 2s_1.
\end{align*}
\end{corollary}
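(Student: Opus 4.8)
The plan is to derive Corollary~\ref{cor:1.6} directly from Theorem~\ref{thm:1.5} by analyzing the large-$k$ asymptotics of the closed formula for $\dl$. First I would observe that, under the correspondence between orientation reversal and negation in $\Inv_{\Q}$, it suffices to establish the formula for $\dl(\#_k(Y,\s))$ in both cases; the formulas for $\du$ then follow immediately from the identity $\du(Y,\s) = -\dl(-Y,\s)$ together with the fact that $d(\#_k(Y,\s)) = k\,d(Y,\s)$ (additivity of the ordinary $d$-invariant under connected sum). So the entire content reduces to two computations of $\dl$.

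Next I would record how the decomposition of $h(Y,\s)$ behaves under $k$-fold connected sum. Since $h$ is a homomorphism to $\Inv_{\Q}$ and the $Y_i$ are a basis (Theorem~\ref{thm:1.2}), $h(\#_k(Y,\s))$ is the $k$-fold sum of $\big(\sum Y_{s_i}\big) - \big(\sum Y_{t_j}\big)$, shifted by $k\Delta$. Concretely, if $c_i$ is the coefficient of $Y_i$ in $h(Y,\s)$, then the coefficient in $h(\#_k(Y,\s))$ is $k c_i$. I would then feed this into Theorem~\ref{thm:1.5}: the multisets of indices become $\{s_1^{(k)} \geq s_2^{(k)} \geq \cdots\}$, consisting of each $s_i$ repeated $k$ times (sorted), and likewise for the $t_j$. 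The quantities $P_i$ and $Q_i$ of Theorem~\ref{thm:1.5} are partial sums of these sorted multisets. For $k$ large, the first several (say the first $k$) of the sorted $s$-indices are all equal to $s_1$, and likewise the first $k$ of the $t$-indices equal $t_1$; this is the key simplification that makes the $P_i, Q_i$ linear in $i$ for small $i$.

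The heart of the argument is then a monotonicity analysis of the nested $\min$/$\max$ expression. In the case $t_1 > s_1$: for indices $i$ in the initial block (where both sorted multisets are constant) we have $P_i = 2i(t_1 - s_1)$, which is strictly increasing in $i$, and $Q_i = 2i(t_1-s_1) - 2s_1 = P_i - 2s_1$. Because $P_i$ is increasing and starts at $P_0 = 0$, the running minimum $\min(P_0,\dots,P_i)$ is always $P_0 = 0$, so each line of the $\max$ is $\min(0, Q_i) = Q_i$ (as $Q_i \le Q_0 = -2s_1 < \ldots$, wait — actually $Q_i$ is increasing too, so $\min$ over the line is $\min(0, Q_i)$); one checks the largest such value is attained by pushing $i$ as far into the constant block as possible, giving $\max = \min(0, 2i(t_1-s_1) - 2s_1)$ which, once $i$ is large enough (i.e.\ $k$ large enough), equals $0$. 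Hence $\dl(\#_k(Y,\s)) = k\,d(Y,\s)$. Then $\du(\#_k(Y,\s)) = -\dl(\#_k(-Y,\s))$; reversing orientation swaps the roles of $s_1$ and $t_1$, so now $s_1 > t_1$ for $-Y$, and the symmetric computation below gives $\dl(\#_k(-Y,\s)) = k\,d(-Y,\s) - 2t_1$, whence $\du(\#_k(Y,\s)) = k\,d(Y,\s) + 2t_1$. The case $s_1 > t_1$ is handled by the same analysis with roles reversed: now $P_i = -2i(s_1 - t_1)$ is \emph{decreasing}, so the running minimum $\min(P_0,\dots,P_i)$ eventually tracks $P_i \to -\infty$, while $Q_0 = -2s_1$ and $Q_i = P_i - 2s_1 < P_i$; balancing the competing lines, one finds the $\max$ is achieved at $i = 0$, giving $\min(P_0, Q_0) = \min(0, -2s_1) = -2s_1$, so $\dl(\#_k(Y,\s)) = k\,d(Y,\s) - 2s_1$, and $\du$ follows as before.

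The main obstacle I anticipate is purely bookkeeping: carefully verifying that, once $k$ exceeds a threshold depending on the $s_i$, $t_j$, the optimizing index $i$ in the nested $\max$ genuinely lies within the initial constant block of the sorted multisets, so that the linear expressions for $P_i, Q_i$ are valid there and the monotonicity conclusions go through. This requires checking that contributions from indices $i$ beyond the constant block (where the sorted $s$- or $t$-values have dropped to $s_2, t_2$, etc.) cannot beat the value already attained inside the block — which follows because each additional term in a partial sum can only decrease a running minimum, so extending $i$ past the block never helps. Assembling these observations carefully, and being careful about the edge cases flagged in Theorem~\ref{thm:1.5} (deletion of $Q_{\min(m,n)}$ when $\min(m,n)=m$, behavior when $m$ or $n$ is $0$), completes the proof; none of it is deep, but it demands precision.
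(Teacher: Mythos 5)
Your proposal is correct and follows essentially the same route as the paper's proof: you plug $\#_k(Y,\s)$ into Theorem~\ref{thm:1.5}, use the observation that for $i\le k$ the sorted multisets are constant (so $P_i=2i(t_1-s_1)$, $Q_i=P_i-2s_1$), determine the max by noting that for $k$ large the initial-block line already attains the bound while later lines only add more terms to a running minimum, and then deduce $\du$ from $\dl$ via $\du(Y,\s)=-\dl(-Y,\s)$ together with additivity of $d$. The paper just makes your "bookkeeping obstacle" concrete by picking explicit thresholds ($Q_{k-1}\ge 0$ in the first case, $P_k\le -2s_1$ in the second) for how large $k$ must be, and by observing directly that every other line of the max is bounded by $P_0=0$ (resp.\ $Q_0=-2s_1$).
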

\noindent
We currently do not know whether a similar ``stabilization" result holds more generally for all rational homology spheres; if not, then Corollary~\ref{cor:1.6} provides an interesting obstruction to being a connected sum of AR plumbed manifolds. Theorem~\ref{thm:1.5} and Corollary~\ref{cor:1.6} should be compared with other expressions derived for various Floer correction terms in e.g.\ \cite[Theorem 1.4]{Stoffregen2}, \cite[Theorem 1.3]{DM}, \cite[Corollary 1.5]{DM}. It is possible (although actually rather involved) to show that Theorem~\ref{thm:1.5} reduces to \cite[Theorem 1.3]{DM} in the appropriate cases. 

We also have the following realization result:
\begin{corollary}\label{cor:1.7}
Let $d, \du,$ and $\dl$ be any triple of even integers such that $\dl \leq d \leq \du$ and $d, \du,$ and $\dl$ are not all equal to each other. In addition, let $\bar{\mu}$ be any integer. Then there are infinitely many distinct classes in $\Theta_{SF}$ with the invariants $d, \du, \dl,$ and $\bar{\mu}$.
\end{corollary}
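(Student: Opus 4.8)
The plan is to combine the complete description of $h(\Theta_{SF})$ in Theorem~\ref{thm:1.1} with the formula for $\dl$ in Theorem~\ref{thm:1.5} (and, via $\du(Y,\s)=-\dl(-Y,\s)$, for $\du$), and then to reverse-engineer the coefficients. By Theorem~\ref{thm:1.1}, $h(\Theta_{SF})\cong\Z^\infty$ is freely generated by the class of the Poincar\'e sphere together with the $Y_i$, and every element $(\sum_i c_iY_i)[\Delta]$ (with $c_i\in\Z$ finitely supported and $\Delta\in 2\Z$) is the image under $h$ of a genuine connected sum of copies of $\pm\Sigma(2,3,5)$ and $\pm\Sigma(2i+1,4i+1,4i+3)$, all of which lie in $\Theta_{SF}$. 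Since manifolds with distinct images under $h$ cannot be homology cobordant, it suffices to produce, for each admissible quadruple $(d,\du,\dl,\bar\mu)$, infinitely many distinct elements of $h(\Theta_{SF})$ carrying those invariants.

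Given the target, put $A=\du-d\ge 0$ and $B=d-\dl\ge 0$; these are even and, by hypothesis, not both zero. Fix the grading shift to be $\Delta=2\bar\mu$, as dictated by Theorem~\ref{thm:1.2}; this determines $d(Y,\s)$, and it remains to choose the $c_i$ so that the nested ``$\max$-of-$\min$'' correction in Theorem~\ref{thm:1.5} equals $-B$ while the analogous correction computed for $-Y$ equals $-A$. Using the worked examples following Theorem~\ref{thm:1.5} as building blocks, I would split into three cases. If $A=0<B$, take $h(Y)=(Y_{B/2}+kY_1)[\Delta]$ with $k\ge 0$ arbitrary: the largest index present is $B/2$, so the ``$n=0$'' case of Theorem~\ref{thm:1.5} gives $\dl=d-B$, while $-Y$ has only negative coefficients, so the ``$m=0$'' case gives $\du=d$. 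If $B=0<A$, take the mirror $h(Y)=(-Y_{A/2}-kY_1)[\Delta]$, giving $\du=d+A$ and $\dl=d$. If $A,B>0$, take
\[
h(Y)=(Y_{s_1}+Y_{(A+B)/2}-Y_{s_1+A/2})[\Delta],\qquad s_1\ge\tfrac{A+B}{2};
\]
a direct substitution into Theorem~\ref{thm:1.5}, applied both to $Y$ and to $-Y$, shows that the chain $s_1< s_1+\tfrac{A}{2}< s_1+\tfrac{A+B}{2}$ places the relevant expression in exactly the branch evaluating to $-B$ (giving $\dl=d-B$) and, for $-Y$, the branch evaluating to $-A$ (giving $\du=d+A$), independently of $s_1$.

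In each case one index (the parameter $k$, or $s_1$) ranges freely over an infinite set, and distinct values yield distinct elements of $\Z^\infty\cong h(\Theta_{SF})$ --- for instance the largest index present is strictly increasing --- hence infinitely many pairwise non-homology-cobordant classes in $\Theta_{SF}$, all with the prescribed $d,\du,\dl,\bar\mu$.

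The only genuinely delicate step is the bookkeeping in the third case: passing from $Y$ to $-Y$ interchanges the $s$- and $t$-index sets, so the two requirements $\dl-d=-B$ and $\du-d=A$ are coupled through a single choice of $(s_1,s_2,t_1)$, and one must check that this choice lands both ``$\max$-of-$\min$'' expressions in the correct branch of Theorem~\ref{thm:1.5}. Keeping the configuration minimal --- two positive summands and one negative summand --- reduces this to a short case-check; the rest of the argument is formal.
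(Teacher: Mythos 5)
Your reduction at the start is sound and matches the paper: by Theorem~\ref{thm:1.1} every element of $h(\Theta_{SF})$ is realized by an actual connected sum of Seifert spheres, so it suffices to exhibit infinitely many distinct elements $\bigl(\sum_i c_i Y_i\bigr)[\Delta]\in\Inv_\Q$ carrying the prescribed invariants, and your computations of $\du-d$ and $d-\dl$ from Theorem~\ref{thm:1.5} for each ansatz are correct. The gap is that you never control the absolute value of the $d$-invariant. The assertion that fixing $\Delta=2\bar\mu$ ``determines $d(Y,\s)$'' is false: by Theorem~\ref{thm:4.4} each basis element $Y_i=M(2i,0)$ has $d(Y_i)=2i$, and since $d$ is a homomorphism,
\[
d\Bigl(\bigl(\textstyle\sum_i c_iY_i\bigr)[\Delta]\Bigr)=2\textstyle\sum_i i\,c_i+\Delta,
\]
so the coefficients $c_i$ contribute. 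Concretely, in your Case 1 the class $(Y_{B/2}+kY_1)[2\bar\mu]$ has $d=B+2k+2\bar\mu$, which grows with $k$; the resulting infinite family therefore has infinitely many \emph{different} $d$-invariants, rather than infinitely many classes with the prescribed one. Case 2 has the same defect. In Case 3 the value $d=2s_1+(A+B)-(2s_1+A)+2\bar\mu=B+2\bar\mu$ is at least $s_1$-independent, but it equals the prescribed $d$ only when $\dl=2\bar\mu$, which need not hold.

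The paper's proof addresses exactly this point. After shifting so that $\bar\mu=0$, it works with $\Sigma_{n,k}=Y_{M+2N+k}-Y_{M+N+k}-Y_{M+N}-nY_1$ where $M=(\du-d)/2$ and $N=(d-\dl)/2$. The parameter $k$ increments the first two indices together, so that $a-b$, $-a+b+c$, \emph{and} $a-b-c$ are all $k$-independent; this makes $d$, $\du-d$, and $d-\dl$ simultaneously stable along the family. The separate balancing term $-nY_1$ is then tuned once to hit the prescribed value $d=-2M-2n$, which covers $d\le-2M$; a second ansatz carrying $+nY_1$ covers $d>-2M$. Your construction has neither the $d$-balancing parameter nor the ``shift two indices in tandem'' device, so the quadruple $(d,\du,\dl,\bar\mu)$ is not actually realized. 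The argument is salvageable --- your Case 3 shape can be augmented to, say, $Y_{s_1+k}+Y_{(A+B)/2}-Y_{s_1+A/2+k}\pm nY_1$ --- but as written it proves a weaker statement (fixed $\du-d$, $d-\dl$, $\bar\mu$, varying $d$) than the corollary asserts.
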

\noindent
Given Theorem~\ref{thm:1.1}, Corollary~\ref{cor:1.7} is not particularly surprising, although it does establish that $\bar{\mu}$ is almost entirely independent from the involutive Floer correction terms. Note that if $\dl = d = \du$, then it is not hard to show the corresponding local equivalence class must be trivial (up to grading shift); in our case, this also implies $d =  -2\bar{\mu}$.

\medskip
\noindent {\bf Organization of the paper.} In Section~\ref{sec:2}, we review the construction of involutive Heegaard Floer homology and describe the set-up of \cite{DM} concerning the involutive Floer homology of AR plumbed manifolds. In Section~\ref{sec:3}, we establish the main technical result needed to prove Theorems~\ref{thm:1.1} and \ref{thm:1.2}, which we do in Section~\ref{sec:4}. In Section~\ref{sec:5}, we prove Theorem~\ref{thm:1.5} on the involutive Floer correction terms $\du$ and $\dl$. Finally, in Section~\ref{sec:6}, we give some examples and prove Corollaries~\ref{cor:1.6} and \ref{cor:1.7}. 

\medskip
\noindent {\bf Acknowledgements.} We would like to thank Jen Hom, Tye Lidman, Francesco Lin, and Christopher Scaduto for helpful conversations and suggestions, as well as Duncan McCoy for pointing out several interesting examples to us. We would also like to thank our advisors, Zolt\'an Szab\'o and Ciprian Manolescu, respectively, for their continued support and guidance.  We note that Theorem \ref{thm:1.1} has also been independently established by Hendricks-Hom-Lidman. 


\section{Preliminary Notions}
\label{sec:2}
In this section, we give the necessary background required for the rest of the paper. Much of the exposition here is taken from \cite{DM}, but since familiarity with the relevant constructions (and their notation) will be critical in future sections, we have included it as a matter of convenience to the reader. 

\subsection{Involutive Heegaard Floer Homology}\label{sec:2.1} \hspace*{\fill} \\
\indent
We begin by reviewing the construction of involutive Heegaard Floer homology as given in \cite{HMinvolutive}. We restrict ourselves to the case where $Y$ is a rational homology sphere and $\s$ is a self-conjugate $\spinc$-structure on $Y$. Let $\Hp = (H, J)$ be a Heegaard pair for $Y$, consisting of a pointed Heegaard splitting $H = (\Sigma, \alphas, \betas, z)$ of $Y$, together with a family of almost-complex structures $J$ on $\Sym^g(\Sigma)$. Associated to $\Hp$, we have the Heegaard Floer complex $\CFm(\Hp, \s)$, which is a $\Q$-graded, free $\ff[U]$-module generated by the intersection points $\Ta \cap \Tb$ in $\Sym^g(\Sigma)$. There are also three other variants of this complex, related by
\[
\CFhat = \CFm/(U = 0), \CFinf = U^{-1}\CFm, \text{ and } \CFp = \CFinf/\CFm.
\]
We denote these by $\CFo$ for $\circ = \widehat{\phantom{a}}, \infty, +, -$. If $\Hp$ and $\Hp'$ are two Heegaard pairs for $Y$ with the same basepoint $z$, then by work of Juh\'asz and Thurston \cite{Naturality}, any sequence of Heegaard moves relating $\Hp$ and $\Hp'$ defines a homotopy equivalence
\[
\Phi(\Hp, \Hp')\co \CFo(\Hp, \s) \rightarrow \CFo(\Hp', \s).
\]
This assignation is itself unique up to chain homotopy, in the sense that any two sequences of Heegaard moves define chain-homotopic maps $\Phi$; see also \cite[Proposition 2.3]{HMinvolutive}. This justifies the use of the notation $\CFo(Y, \s)$, rather than $\CFo(\Hp, \s)$. Taking the homology of $\CFo(Y, \s)$ yields the Heegaard Floer homology $\HFo(Y, \s)$ constructed by Ozsv\'ath and Szab\'o in \cite{HolDisk}, \cite{HolDiskTwo}.

Now consider the conjugate Heegaard pair $\bHp = (\bh, \bJ)$. This is defined by reversing the orientation on $\Sigma$ and interchanging the $\alpha$ and $\beta$ curves to give the Heegaard splitting
\[
\bh = (-\Sigma, \betas, \alphas, z),
\]
and taking the conjugate family $\bJ$ of almost-complex structures on $\Sym^g(-\Sigma)$. The points of $\Ta \cap \Tb$ are in obvious correspondence with the points of $\Tb \cap \Ta$, and $J$-holomorphic disks with boundary on $(\Ta, \Tb)$ are in bijection with $\bJ$-holomorphic disks with boundary on $(\Tb, \Ta)$. This yields a canonical isomorphism
\[
\eta\co \CFo(\Hp, \s) \rightarrow \CFo(\bHp, \s),
\]
where here we have used the fact that $\s = \bs$, since $\s$ is self-conjugate. Note that this is \textit{not} the map $\Phi(\Hp, \bHp)$ defined in the previous paragraph. Instead, defining
\[
\inv = \Phi(\bHp, \Hp) \circ \eta,
\]
we obtain a chain map from $\CFo(\Hp, \s)$ to itself. In \cite[Section 2.2]{HMinvolutive} it is shown that $\inv$ is a homotopy involution and is independent (up to the notion of equivalence defined later in this section) of the choice of $\Hp$. The involutive Heegaard Floer complex is then defined to be the mapping cone 
\begin{equation}\label{eqn:1}
\CFIo(Y, \s) = \left( \CFo(Y, \s) \xrightarrow{\phantom{o} Q (1+\inv) \phantom{o}} Q \ccdot \CFo(Y, \s) [-1] \right).
\end{equation}
Here, $Q$ is a formal variable marking the right-hand copy of $\CFo(Y, \s)$. Taking the homology of $\CFIo(Y, \s)$ yields the involutive Heegaard Floer homology $\HFIo(Y, \s)$. In \cite[Proposition 2.8]{HMinvolutive} it is shown that the quasi-isomorphism type of $\CFIo(Y, \s)$ is an invariant of $(Y, \s)$. In this paper, we will mainly deal with the minus version of the involutive Floer complex, $\CFIm$.

We formalize the algebra underlying $\CFIm$ by recalling \cite[Section 8]{HMZ}:

\begin{definition}\cite[Definition 8.1]{HMZ}\label{def:2.1}
An {\em $\inv$-complex} is a pair $(C, \inv)$, consisting of
\begin{itemize}
\item a $\Q$-graded, finitely generated, free chain complex $C$ over the ring $\ff[U]$, where $\operatorname{deg}(U)=-2$. Moreover, we ask that there is some $\tau \in \Q$ such that the complex $C$ is supported in degrees differing from $\tau$ by integers. We also require that there is a relatively graded isomorphism
\[
U^{-1}H_*(C) \cong \ff[U, U^{-1}],
\]
and that $U^{-1}H_*(C)$ is supported in degrees differing from $\tau$ by even integers;
\item a grading-preserving chain homomorphism $\inv \co C \to C$, such that $\inv^2$ is chain homotopic to the identity.
\end{itemize}
\end{definition}
\noindent
The set of $\inv$-complexes comes with a natural equivalence relation, as follows:

\begin{definition}\cite[Definition 8.3]{HMZ}\label{def:2.2}
Two $\inv$-complexes $(C, \inv)$ and $(C', \inv')$ are called {\em equivalent} if there exist chain homotopy equivalences
\[
F \co C \to C', \ \ G \co C' \to C
\]
that are homotopy inverses to each other, and such that 
\[
F \circ \inv \simeq \inv' \circ F,  \ \ \ G \circ \inv' \simeq \inv \circ G,
\]
where $\simeq$ denotes $\ff[U]$-equivariant chain homotopy.
\end{definition}
\noindent
Given $(Y, \s)$ as above, the pair $(\CFm(Y, \s), \inv)$ is evidently a $\inv$-complex. Note that if $Y$ is an integer homology sphere, then $\CFm(Y, \s)$ is $\mathbb{Z}$-graded, rather than $\mathbb{Q}$-graded (and we may take $\tau = 0$). It is shown in \cite{HMinvolutive} that choosing different Heegaard pairs for $Y$ yields equivalent $\inv$-complexes in the sense of Definition~\ref{def:2.2}. Hence we use the notation $(\CFm(Y, \s), \inv)$, rather than $(\CFm(\Hp, \s), \inv)$. Given an arbitrary $\inv$-complex, we can of course take the homology of its mapping cone as in (\ref{eqn:1}), which we refer to as the involutive homology of $(C, \inv)$. In \cite{HMinvolutive} it is shown that an equivalence of $\inv$-complexes induces a quasi-isomorphism between their mapping cones.

The following important structural result about involutive Floer homology was proven by Hendricks, Manolescu, and Zemke in \cite{HMZ}:

\begin{theorem}\cite[Theorem 1.1]{HMZ}
\label{thm:2.3}
Suppose $Y_1$ and $Y_2$ are rational homology spheres equipped with self-conjugate $\spinc$-structures $\s_1$ and $\s_2$. Let $\inv_1$, $\inv_2$ and $\inv$ denote the conjugation involutions on the Floer complexes $\CFm(Y_1, \s_1)$, $\CFm(Y_2,\s_2)$, and $\CFm(Y_1 \# Y_2, \s_1 \# \s_2)$, respectively. Then, the equivalence class of the $\inv$-complex $(\CFm(Y_1 \# Y_2), \inv)$ is the same as that of
\[
\bigl( \CFm(Y_1, \s_1) \otimes_{\ff[U]} \CFm(Y_2, \s_2) [-2], \ \inv_1 \otimes \inv_2 \bigr),
\]
where $[-2]$ denotes a grading shift.
\end{theorem}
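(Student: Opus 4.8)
The plan is to deduce the formula from the non-involutive connected sum (K\"unneth) theorem of Ozsv\'ath--Szab\'o by tracking the conjugation symmetry through its proof. Fix pointed Heegaard pairs $\Hp_1, \Hp_2$ for $(Y_1, \s_1)$ and $(Y_2, \s_2)$ and form the standard connected-sum pair $\Hp_1 \# \Hp_2$ for $(Y_1 \# Y_2, \s_1 \# \s_2)$: connect-sum the Heegaard surfaces in a neighborhood of the basepoints, take the union of the attaching curves, and place the basepoint $z$ in the connect-sum region. The K\"unneth theorem supplies a homotopy equivalence
\[
\mu\co \CFm(\Hp_1, \s_1) \otimes_{\ff[U]} \CFm(\Hp_2, \s_2)[-2] \;\xrightarrow{\ \simeq\ }\; \CFm(\Hp_1 \# \Hp_2, \s_1 \# \s_2)
\]
which on generators identifies $\x_1 \otimes \x_2$ with the corresponding intersection point of $\Hp_1 \# \Hp_2$; the grading shift $[-2]$ is the one already present in the non-involutive statement, so I would import it together with $\mu$. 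It then suffices to show that, after conjugating by $\mu$, the involution $\inv$ on $\CFm(Y_1 \# Y_2, \s_1 \# \s_2)$ agrees up to $\ff[U]$-equivariant chain homotopy with $\inv_1 \otimes \inv_2$; the pair $(F, G) = (\mu, \text{homotopy inverse of } \mu)$ then witnesses the claimed equivalence of $\inv$-complexes in the sense of Definition~\ref{def:2.2}.

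I would analyze the two factors of $\inv = \Phi(\bHp, \Hp) \circ \eta$ in turn. The conjugate of $\Hp_1 \# \Hp_2$ is canonically $\bHp_1 \# \bHp_2$, and since the tautological map $\eta$ is defined by the obvious bijection of intersection points and of holomorphic disks under orientation reversal and the swap $\alphas \leftrightarrow \betas$, it follows immediately from the generator-level description of $\mu$ (and of the corresponding K\"unneth equivalence for $\bHp_1 \# \bHp_2$) that $\eta_{\Hp_1 \# \Hp_2}$ corresponds to $\eta_{\Hp_1} \otimes \eta_{\Hp_2}$. The substantive point is the naturality map $\Phi(\bHp_1 \# \bHp_2,\, \Hp_1 \# \Hp_2)$. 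By Juh\'asz--Thurston naturality it is determined up to $\ff[U]$-equivariant homotopy, so it is enough to exhibit one sequence of Heegaard moves from $\bHp_1 \# \bHp_2$ to $\Hp_1 \# \Hp_2$ whose induced map corresponds to $\Phi(\bHp_1, \Hp_1) \otimes \Phi(\bHp_2, \Hp_2)$. Choosing sequences $\bHp_i \rightsquigarrow \Hp_i$ supported away from the basepoints $z_i$, each such move may be carried out inside the connected sum away from the connect-sum region, and a move supported in the $Y_i$ summand induces, through $\mu$, the tensor of the associated elementary map with the identity on the other factor; concatenating all the moves yields the composite $\bHp_1 \# \bHp_2 \rightsquigarrow \Hp_1 \# \Hp_2$ together with the map $\Phi(\bHp_1, \Hp_1) \otimes \Phi(\bHp_2, \Hp_2)$ up to homotopy. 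Combining, and transporting everything along $\mu$ and its conjugate analogue,
\[
\inv \;=\; \Phi(\bHp_1 \# \bHp_2, \Hp_1 \# \Hp_2) \circ \eta \;\simeq\; \bigl(\Phi(\bHp_1, \Hp_1) \circ \eta_{\Hp_1}\bigr) \otimes \bigl(\Phi(\bHp_2, \Hp_2) \circ \eta_{\Hp_2}\bigr) \;=\; \inv_1 \otimes \inv_2,
\]
and $(\inv_1 \otimes \inv_2)^2 = \inv_1^2 \otimes \inv_2^2 \simeq \id$ because each $\inv_i^2 \simeq \id$, so $\inv_1 \otimes \inv_2$ is a grading-preserving homotopy involution.

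The main obstacle is the assertion that a Heegaard move performed in one summand induces, with respect to the connect-sum equivalence $\mu$, precisely the tensor of the associated elementary map with the identity; more delicate still is ensuring that the auxiliary choices entering $\mu$ itself (a stabilized diagram, an almost-complex structure, and a neck-length/area constraint in the connect-sum region) are compatible with this across all the elementary moves. I would handle this by fixing a single neck-stretched model once and for all, so that $\mu$, its analogue for the conjugate pairs, $\eta$, and every elementary naturality map are all computed in the same degeneration limit, in which holomorphic curves split as a pair of curves in the two Heegaard surfaces together with a curve in the connect-sum neck; the index and energy bookkeeping in that limit is exactly what forces each map to be a tensor product and also accounts for the $[-2]$ shift. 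An alternative I would note but not pursue is to invoke Zemke's graph-cobordism functoriality: connected sum is realized by a merge cobordism, the conjugation symmetry is a natural transformation of the Heegaard Floer TQFT, and the theorem then reduces to computing the symmetry map of that cobordism and using its compatibility with cobordism maps.
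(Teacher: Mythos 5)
Theorem~\ref{thm:2.3} is not proved in this paper; it is quoted verbatim from Hendricks--Manolescu--Zemke (cited as [Theorem 1.1] of HMZ) and used as a black box, so there is no in-paper argument to compare against. About your proposal: the route HMZ actually take is the one you mention only in passing at the end, namely Zemke's graph-cobordism/TQFT framework, which expresses both $\iota$ and the connected sum operation in terms of cobordism maps and thereby avoids Heegaard-move bookkeeping entirely. Your primary route has a real gap at precisely the place you flag as ``the main obstacle'': after the (correct and easy) observation that $\eta$ tensors, you must show that the Juh\'asz--Thurston naturality map $\Phi(\bHp_1\#\bHp_2,\,\Hp_1\#\Hp_2)$ is intertwined by the K\"unneth equivalence $\mu$ with $\Phi(\bHp_1,\Hp_1)\otimes\Phi(\bHp_2,\Hp_2)$. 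This is not a formal consequence of the existence of $\mu$ together with naturality of $\Phi$; it is a separate naturality statement for the K\"unneth equivalence itself, namely that $\mu$ intertwines the continuation and triangle maps for each elementary move supported in one summand (and does so coherently along an entire move sequence, with compatible choices of stabilized diagram, almost-complex structure, and neck length on both sides). That compatibility is a dedicated holomorphic-analysis statement, and the neck-stretching ``single degeneration limit'' you gesture at is the right shape of argument but is not carried out. This is exactly the analytic content that the cobordism-theoretic reformulation is designed to sidestep, which is why HMZ work in that framework.
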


One can also consider a coarser equivalence relation on the set of $\inv$-complexes, as discussed in \cite[Section 8]{HMZ}:

\begin{definition}\cite[Definition 8.5]{HMZ}\label{def:2.4}
Two $\inv$-complexes $(C, \inv)$ and $(C', \inv')$ are called {\em locally equivalent} if there exist (grading-preserving) chain maps
\[
F \co C \to C', \ \ G \co C' \to C
\]
such that 
\[
F \circ \inv \simeq \inv' \circ F,  \ \ \ G \circ \inv' \simeq \inv \circ G,
\]
and $F$ and $G$ induce isomorphisms on homology after inverting the action of $U$.
\end{definition}
\noindent
We call a map $F$ as above a \textit{local map} from $(C, \inv)$ to $(C', \inv')$, and similarly we refer to $G$ as a local map in the other direction. The relation of local equivalence is modeled on that of homology cobordism, in that if $Y_1$ and $Y_2$ are homology cobordant, then their respective $\inv$-complexes are locally equivalent. In \cite[Section 8]{HMZ}, it is shown that the set of $\inv$-complexes up to local equivalence forms a group, with the group operation being given by tensor product. We call this group the \textit{involutive Floer group} and denote it by $\Inv_{\Q}$:

\begin{definition}\cite[Proposition 8.8]{HMZ}\label{def:2.5}
Let $\Inv_{\Q}$ be the set of $\inv$-complexes up to local equivalence. This has a multiplication given by tensor product, which sends (the local equivalence classes of) two $\inv$-complexes $(C_1, \inv_1)$ and $(C_2, \inv_2)$ to (the local equivalence class of) their tensor product complex $(C_1 \otimes C_2, \inv_1 \otimes \inv_2)$. 
\end{definition}
\noindent
The identity element of $\Inv_{\Q}$ is given by the trivial complex consisting of a single $\ff[U]$-tower starting in grading zero, together with the identity map on this complex. Inverses in $\Inv_{\Q}$ are given by dualizing. There is an obvious subgroup of $\Inv_{\Q}$ generated by the set of $\inv$-complexes which are $\Z$-graded; we denote this by $\Inv$. Clearly, $\Inv_{\Q}$ consists of an infinite number of copies of $\Inv$, one for each $[\tau] \in \Q/2\Z$. See \cite[Section 8]{HMZ} for further discussion, and also \cite{Stoffregen} for the construction of the analogous group $\mathfrak{CLE}$ in Pin(2)-homology.  We will sometimes use additive notations for elements of $\Inv_{\Q}$.  

Let $h$ be the map sending a pair $(Y, \s)$ to the local equivalence class of its (grading-shifted) $\inv$-complex:
\[
h(Y, \s) = (\CFm(Y, \s), \iota)[-2].
\]
In light of Theorem~\ref{thm:2.3}, restricting to the case of integer homology spheres, we obtain a homomorphism 
\[
h\co \Theta_{\Z}^3 \rightarrow \Inv.
\]
We can further restrict $h$ to the subgroup of $\Theta_{\Z}^3$ generated by Seifert fibered spaces (respectively, AR plumbed homology spheres), which we denote by $\Theta_{SF}$ (respectively, $\Theta_{AR}$). As described in Section~\ref{sec:1}, in this paper we will use the algebraic set-up of \cite{DM} to complete the analysis of $h(\Theta_{SF})$ and $h(\Theta_{AR})$.

Finally, we review the definition of the involutive Floer correction terms $\du$ and $\dl$. These are given by
\[ 
\dl(Y,\s) = \max \{r \mid \exists \ x \in \HFIm_r(Y, \s), \forall \ n \geq 0, \ U^nx\neq 0 \ \text{and} \ U^nx \notin \operatorname{Im}(Q)\} + 1 
\]
and
\[ 
\du(Y,\s) = \max \{r \mid \exists \ x \in \HFIm_r(Y,\s),\forall \ n \geq 0, U^nx\neq 0; \exists \ m\geq 0 \ \operatorname{s.t.} \ U^m x \in \operatorname{Im}(Q)\} +2.
\]
See \cite[Lemma 2.9]{HMinvolutive}. (The shifts by one and two in these definitions are chosen so that $d=\dl=\du=0$ for $Y=S^3$.) Like the Ozsv\'ath-Szab\'o $d$-invariant, it is easily checked that the involutive Floer correction terms are invariant under homology cobordism, although it should be noted that they are not homomorphisms; see \cite[Theorem 1.3]{HMinvolutive}. More generally, we can define 
\[
\du, \dl\co \Inv_{\Q} \rightarrow \Q
\]
by taking the involutive homology of any $\inv$-complex and using a similar definition as before. In this paper, we will use a slightly different convention than in \cite[Section 8]{HMZ}, and subtract two from the above definitions of $\du$ and $\dl$ when defining them on $\Inv_{\Q}$. This is to cancel out the grading shift in the definition of $h$, so that
\begin{equation}\label{eqn:dudl}
\du(Y, \s) = \du(h(Y, \s)) \text{ and } \dl(Y, \s) = \dl(h(Y, \s)).
\end{equation}
Note that the conventions in \cite[Section 8]{HMZ} are such that $\du(Y, \s) = \du(\CFm(Y, \s), \inv)$ and $\dl(Y, \s) = \dl(\CFm(Y, \s), \inv)$.

\subsection{Almost-Rational Plumbed Manifolds}\label{sec:2.2}\hspace*{\fill} \\
\indent
We now define the class of AR plumbed manifolds. The motivation behind these spaces originally comes from the study of complex singularities (see e.g.\ \cite{Nem}). However, from the viewpoint of involutive Floer homology, it will be more convenient to think of AR manifolds simply as a class of three-manifolds whose Floer homology always takes a certain form; see Theorem~\ref{thm:2.7}. Thus the topological details of the construction will not actually be of much consequence in the rest of the paper, but we include them here for completeness.

Let $G$ be a decorated graph, and denote the decoration of a vertex $v$ in $G$ by $m(v) \in \Z$. Let $L$ be the integer lattice spanned by the vertices of $G$. We define a bilinear pairing on $L$ by setting
\[
\langle v, w \rangle = \begin{cases}
m(v) & \text{if $v=w$},\\
1 & \text{if $v$ and $w$ are connected by an edge, and}\\
0 & \text{otherwise,}
\end{cases}
\]
and extending linearly to $L$. We call this pairing the \textit{intersection form} associated to $G$. Denote the dual lattice of $L$ by $L' = \Hom(L, \Z)$, and define the \textit{canonical characteristic element} $K \in L'$ by setting
\[
K(v) = -m(v) - 2,
\]
for all vertices $v$. If $x$ is a vector in $L$, then we write $x > 0$ if all of the coefficients of $x$ are non-negative and if $x \neq 0$.

Now let $Y = Y(G)$ be the manifold defined by the usual three-dimensional plumbing construction on $G$. This means that $Y$ is the boundary of the oriented four-manifold $W = W(G)$ obtained by attaching two-handles to $B^4$ according to $G$. (Note that $Y$ is oriented as the boundary of $W$.) We further suppose that $G$ is a tree and that its intersection form is negative definite; this is equivalent to the claim that $Y$ is a rational homology sphere realized as the link of a normal surface singularity. Such a singularity is called \textit{rational} if its geometric genus is zero. According to a theorem of Artin (see \cite{Artin1}, \cite{Artin2}), this occurs if and only if
\[
-(K(x) + \langle x, x \rangle)/2 \geq 1,
\]
for all $x > 0$ in $L$. A graph satisfying this property is called a \textit{rational plumbing graph}. See \cite[Section 8]{NemethiOS} for further discussion.

\begin{definition}\cite[Definition 8.1]{NemethiOS}\label{def:2.6}
Let $G$ be a tree with negative definite intersection form. We say that $G$ is an {\em almost-rational graph} if there exists a vertex $v$ of $G$ such that by replacing the decoration $m(v)$ with some other $m' \leq m(v)$, we obtain a rational plumbing graph $G'$. In this situation, we refer to $Y=Y(G)$ as an {\em almost-rational plumbed manifold}.
\end{definition}

The family of AR plumbed manifolds includes all Seifert fibered rational homology spheres with base orbifold $S^2$, and, more generally, all one-bad-vertex plumbed manifolds in the sense of Ozsv\'ath and Szab\'o \cite{Plumbed}. In the case that $Y$ is also an (integer) homology sphere, we will refer to $Y$ as an AR plumbed homology sphere. Note that the class of AR manifolds is not closed under connected sum or orientation reversal, although in the latter case the Floer homology is still easy to understand via dualizing. Usually, we will refer to a connected sum of AR manifolds (some with reversed orientation) as a linear combination of AR manifolds.

\begin{remark}
All Seifert fibered rational homology spheres have base orbifold with underlying space $S^2$ or $\rp^2$. (Further, if they are integer homology spheres, this has to be $S^2$.) When the underlying space is $\rp^2$, then the Seifert fibered rational homology sphere is an L-space by \cite[Proposition 18]{BoyerGordonWatson}. In such cases, it is straightforward to show that the involutive Heegaard Floer homology is determined by the ordinary Heegaard Floer homology (and is uninteresting).
\end{remark}

\subsection{Graded Roots and Standard Complexes}\label{sec:2.3}\hspace*{\fill} \\
\indent
We now turn to a discussion of the involutive Floer homology of AR manifolds. For those unfamiliar with the results of \cite{DM} and related papers, a brief outline of the ideas is as follows. Let $Y$ be an AR plumbed three-manifold and let $\s$ be a $\spinc$-structure on $Y$. Then it is a consequence of the isomorphism between lattice homology and Heegaard Floer homology (see e.g.\ \cite{Plumbed}, \cite{NemethiOS}, \cite{Nem}) that the Heegaard Floer homology $\HFm(Y, \s)$ may be compactly expressed in the form of a combinatorial object called a \textit{graded root}. If $\s$ is self-conjugate, then the graded root $R$ associated to $(Y, \s)$ encodes not only the Heegaard Floer homology, but also the homological action of $\inv_*$ on $\HFm(Y, \s)$ (see \cite{DM}, \cite{Dai}). One of the main results of \cite{DM} is that the $\inv$-complex $(\CFm(Y, \s), \inv)$ can be recovered from $R$ together with this homological involution.

More precisely, associated to $R$ we may define a chain complex $C_*(R)$, called the \textit{standard complex of} $R$, together with an involution $J_0$ on $C_*(R)$ such that $(\CFm(Y, \s), \inv)$ and $(C_*(R), J_0)$ are equivalent $\inv$-complexes. In general, of course, the homotopy type of $\CFIm(Y, s)$ cannot be recovered from $\HFm(Y, \s)$, even with the knowledge of $\inv_*$. However, for AR manifolds the Heegaard Floer homology has certain structural constraints (coming from lattice homology) which imply that in such cases the equivalence class of $(\CFm(Y, \s), \inv)$ is determined for purely algebraic reasons. The standard complex $(C_*(R), J_0)$ has the advantage of being a particularly simple model for $(\CFm(Y, \s), \inv)$, and has a pleasing geometric/combinatorial interpretation which makes understanding and manipulating such complexes easier. (Indeed, the remainder of \cite{DM} is devoted to using this geometric picture to perform computations of $\du$ and $\dl$ for connected sums of AR manifolds.) In this paper, we will similarly manipulate these complexes to prove Theorems~\ref{thm:1.1} and \ref{thm:1.2}.

We now give the precise definition of a graded root. Let $R$ be an infinite tree, and let $\gr\co \Vert(R) \rightarrow \Q$ be a grading function from the vertices of $R$ to a coset of $2\mathbb{Z}$ in $\Q$. We say that $R$ (together with the grading function $\gr$) is a \textit{graded root} if the following conditions are satisfied:
\begin{itemize}
\item $\gr(u) - \gr(v) = \pm 2$ for any edge $(u, v)$,
\item $\gr(u) < \max\{\gr(v), \gr(w)\}$ for any edges $(u,v)$ and $(u,w)$ with $v \neq w$,
\item $\gr$ is bounded above, 
\item $\gr^{-1}(k)$ is finite for any $k \in \Q$, and
\item $\# \gr^{-1}(k)=1$ for all $k \ll 0$ in the image of $\gr$.
\end{itemize}
Note that this definition differs from the one given in \cite[Section 2.3]{DM} by a factor of two in the grading function. We think of the grading of a vertex $v$ as corresponding to its height, so that geometrically a graded root may be realized as an upwards-opening tree with an infinite downwards stem, as pictured in Figure~\ref{fig:2}. See \cite{NemethiGRS} for further discussion.

Given a graded root $R$, we define a graded $\ff[U]$-module $\Hm(R)$ by giving $\Hm(R)$ one generator for each vertex $v$ of $R$, located in the same grading as $\gr(v)$. (These are generators of $\Hm(R)$ over $\ff$.) We define the $U$-action by specifying $$U \cdot v = w \text{ if $(v, w)$ is an edge and } \gr(v) - \gr(w) = 2.$$ Clearly, $U$ has degree $-2$. Since $R$ and $\Hm(R)$ evidently contain precisely the same information, we will sometimes abuse notation and refer to them interchangeably. As alluded to earlier, the following theorem is a consequence of the isomorphism between lattice homology and Heegaard Floer homology:

\begin{theorem}\cite[Theorem 5.2.2]{Nem}
\label{thm:2.7}
Let $Y$ be an almost-rational plumbed three-manifold, and let $\s$ be a $\spinc$-structure on $Y$. Then $\HFm(Y, \s)$ is isomorphic to $\Hm(R)$ (as a graded $\ff[U]$-module) for some graded root $R$.
\end{theorem}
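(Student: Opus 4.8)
The plan is to deduce this from the identification of Heegaard Floer homology with lattice homology, together with N\'emethi's reduction theorem for almost-rational graphs. Recall the lattice homology $\mathbb{H}^-_*(G, \s)$ associated to the negative definite plumbing tree $G$ and the self-conjugate (or arbitrary) $\spinc$-structure $\s$: it is the homology of a $\ff[U]$-complex built on the cube-complex structure of $\R^{|\Vert(G)|}$, with vertex set a coset of $L$ determined by $\s$, where a $0$-cell $x$ is weighted by $\chi(x) = -(K(x) + \langle x, x \rangle)/2$ and a $k$-cell by the maximum of $\chi$ over its vertices. By the work of Ozsv\'ath--Szab\'o \cite{Plumbed} for one-bad-vertex graphs (which contains all AR graphs), together with N\'emethi \cite{Nem}, \cite{NemethiOS}, there is a graded $\ff[U]$-module isomorphism $\mathbb{H}^-_*(G, \s) \cong \HFm_*(Y(G), \s)$ up to an overall $\spinc$-dependent grading shift. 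So it suffices to show that $\mathbb{H}^-(G,\s)$ is the module associated to a graded root.

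First I would observe that the degree-zero part $\mathbb{H}^-_0(G, \s)$ is, tautologically, the graded $\ff[U]$-module $\bigoplus_n H_0(\{\chi \leq n\}; \ff)$, with the $U$-action induced by the inclusions $\{\chi \leq n\} \hookrightarrow \{\chi \leq n+1\}$ (suitably reindexed). Since $G$ is negative definite, $\chi$ is proper and bounded below, each sublevel set has finitely many connected components, and for $n$ sufficiently negative the sublevel set is empty, becoming connected once $n$ exceeds the minimum of $\chi$. Recording the connected components of the sublevel sets, together with the merges forced by the inclusion maps, produces exactly a tree $R$ satisfying the five bulleted conditions in the definition of a graded root (with the grading normalized via any legitimate choice of coset of $2\Z$ in $\Q$ — this is where the $\spinc$-dependent rational constant enters), and by construction $\Hm(R) \cong \mathbb{H}^-_0(G, \s)$ as graded $\ff[U]$-modules.

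The crux is then to show that the higher lattice homology vanishes, i.e.\ $\mathbb{H}^-_i(G,\s) = 0$ for $i \geq 1$, so that $\mathbb{H}^- = \mathbb{H}^-_0$. For a \emph{rational} graph this is essentially local: Artin's criterion $-(K(x) + \langle x, x\rangle)/2 \geq 1$ for all $x > 0$ forces the sublevel sets $\{\chi \leq n\}$ to have trivial reduced homology in every degree, not merely to be connected. For an \emph{almost-rational} graph one uses the distinguished bad vertex $v_0$: one sets up a one-dimensional reduction of the lattice complex indexed by $\Z$, whose $k$-th weight is $\tau(k) = \min\{\chi(x) : x \text{ in the $k$-th slice associated to } v_0\}$, and shows that (a) within each slice $\chi$ attains its minimum on a subset with vanishing higher homology — this is exactly where rationality of the graph obtained by lowering $m(v_0)$ is used — and (b) the passage to this one-dimensional reduction is a chain homotopy equivalence. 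This is precisely the content of the reduction theorem of \cite{Nem}. Granting it, $\mathbb{H}^-(G,\s)$ is computed by a one-dimensional complex, which manifestly yields only $\mathbb{H}^-_0$.

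Combining the three steps: $\HFm_*(Y, \s) \cong \mathbb{H}^-_*(G, \s) = \mathbb{H}^-_0(G, \s) \cong \Hm(R)$ as graded $\ff[U]$-modules, as claimed. I expect the main obstacle to be the third step — setting up the reduction along the bad vertex and proving the slice-wise homological vanishing and the homotopy equivalence; the second step is formal, and the lattice/Heegaard-Floer identification in the first step can be invoked as a black box.
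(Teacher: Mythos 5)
The paper does not prove this theorem; it is stated as a citation to N\'emethi~\cite{Nem} (with cross-references to \cite{Plumbed} and \cite{NemethiOS}), so there is no ``paper's own proof'' to compare against. Your sketch is a reasonable and essentially accurate outline of the argument in those references: identify $\HFm$ with lattice homology, observe that the degree-zero part $\mathbb{H}^-_0$ is encoded by the merging pattern of connected components of sublevel sets (hence a graded root), and invoke N\'emethi's reduction theorem along the distinguished vertex to kill the higher lattice homology.

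Two small imprecisions worth flagging. First, you write that the one-bad-vertex class of Ozsv\'ath--Szab\'o ``contains all AR graphs,'' but the inclusion goes the other way: as the paper itself notes in Section~\ref{sec:2.2}, the class of AR plumbed manifolds \emph{includes} all one-bad-vertex plumbed manifolds (and is strictly larger). This does not break your argument, since you also invoke N\'emethi's work which covers the full AR class, but the parenthetical as written is backwards and would, if taken at face value, make the appeal to N\'emethi superfluous, which it is not. Second, the claim that $\mathbb{H}^-_0(G,\s) \cong \bigoplus_n H_0(\{\chi\le n\};\ff)$ is ``tautological'' is a little too glib: the lattice complex is defined as a weighted cube complex, and passing to the sublevel-set description is a theorem (essentially \cite[Theorem 3.1.12]{Nem}, as the paper also remarks in Section~\ref{sec:2.4}), not a definitional unwinding. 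With those caveats, your account of where the work lies — the slice-wise vanishing coming from rationality of the modified graph, and the chain homotopy equivalence with the one-dimensional reduction — is correct.
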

\noindent
See also \cite[Theorem 1.2]{Plumbed}, \cite[Theorem 8.3]{NemethiOS}.

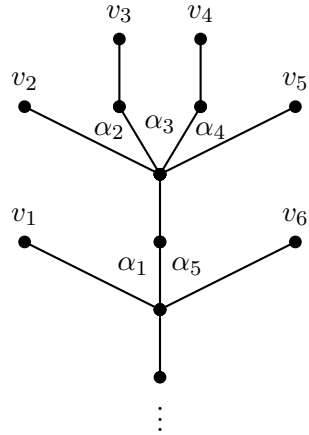
\begin{figure}[h!]
\begin{tikzpicture}[thick,scale=0.9]%
	\node[text width=0.1cm] at (0, -3.5) {\vdots};
	\draw (0, 0) node[circle, draw, fill=black!100, inner sep=0pt, minimum width=4pt] {} -- (0, -1) node[circle, draw, fill=black!100, inner sep=0pt, minimum width=4pt] {};
	\draw (0, -1) node[circle, draw, fill=black!100, inner sep=0pt, minimum width=4pt] {} -- (0, -2) node[circle, draw, fill=black!100, inner sep=0pt, minimum width=4pt] {};
	\draw (0, -2) node[circle, draw, fill=black!100, inner sep=0pt, minimum width=4pt] {} -- (0, -3) node[circle, draw, fill=black!100, inner sep=0pt, minimum width=4pt] {};
	
	\draw (0, 0) node[circle, draw, fill=black!100, inner sep=0pt, minimum width=4pt] {} -- (-0.6, 1) node[circle, draw, fill=black!100, inner sep=0pt, minimum width=4pt] {};
	\draw (0, 0) node[circle, draw, fill=black!100, inner sep=0pt, minimum width=4pt] {} -- (0.6, 1) node[circle, draw, fill=black!100, inner sep=0pt, minimum width=4pt] {};
	
	\draw (-0.6, 1) node[circle, draw, fill=black!100, inner sep=0pt, minimum width=4pt] {} -- (-0.6, 2) node[circle, draw, fill=black!100, inner sep=0pt, minimum width=4pt, label = $v_3$] {};
	\draw (0.6, 1) node[circle, draw, fill=black!100, inner sep=0pt, minimum width=4pt] {} -- (0.6, 2) node[circle, draw, fill=black!100, inner sep=0pt, minimum width=4pt, label = $v_4$] {};	
	
	\draw (0, 0) node[circle, draw, fill=black!100, inner sep=0pt, minimum width=4pt] {} -- (-2, 1) node[circle, draw, fill=black!100, inner sep=0pt, minimum width=4pt, label = $v_2$] {};
	\draw (0, 0) node[circle, draw, fill=black!100, inner sep=0pt, minimum width=4pt] {} -- (2, 1) node[circle, draw, fill=black!100, inner sep=0pt, minimum width=4pt, label = $v_5$] {};
	
	\draw (0, -2) node[circle, draw, fill=black!100, inner sep=0pt, minimum width=4pt] {} -- (-2, -1) node[circle, draw, fill=black!100, inner sep=0pt, minimum width=4pt, label = $v_1$] {};
	\draw (0, -2) node[circle, draw, fill=black!100, inner sep=0pt, minimum width=4pt] {} -- (2, -1) node[circle, draw, fill=black!100, inner sep=0pt, minimum width=4pt, label = $v_6$] {};
	\node[label = $\alpha_3$] at (0,0.3) {};
	\node[label = $\alpha_2$] at (-0.75,0.2) {};
	\node[label = $\alpha_4$] at (0.75,0.2) {};
	\node[label = $\alpha_1$] at (-0.4,-1.8) {};
	\node[label = $\alpha_5$] at (0.4,-1.8) {};
\end{tikzpicture}
\caption{Graded root (with leaf and angle labels) for $\HFm(\Sigma(2, 7, 15))$. \hspace{\textwidth} Uppermost vertices have Maslov grading $-2$; successive rows have grading difference two. (Taken from \cite{DM}.)} \label{fig:2}
\end{figure}

In our case, we will be considering graded roots which have an additional symmetry corresponding to the homological action of $\inv_*$ on $\HFm(Y, s)$. To this end, we define a \textit{symmetric graded root} to be a graded root $R$ with an involution $J$ satisfying the following properties:
\begin{itemize}
\item $ \gr(v) = \gr(Jv)$ for any vertex $v$,
\item $(v, w)$ is an edge in $R$ if and only if $(Jv, Jw)$ is an edge in $R$, and
\item for every $k \in \Q$, there is at most one $J$-invariant vertex $v$ with $\gr(v) = k$.
\end{itemize} 
See also \cite[Definition 2.11]{DM}. Geometrically, a symmetric graded root is simply a graded root which is symmetric under reflection about the obvious central vertical axis. It is clear that the action of $J$ on $R$ may be viewed as an involution on $\Hm(R)$ commuting with the action of $U$. As described in the beginning of the section, if $\s$ is self-conjugate, then $\HFm(Y, \s)$ is in fact isomorphic to a symmetric graded root $R$, and this isomorphism takes the action of $\inv_*$ to the involution on $R$. See \cite[Theorem 3.1]{DM}; also \cite[Lemma 2.5]{Dai} for the corresponding statement in the monopole category.

Now suppose that $R$ is a symmetric graded root. Following \cite[Section 4]{DM}, we construct a free $\ff[U]$-complex whose homology is $\Hm(R)$, as follows. Let $v_1, v_2, \ldots, v_n$ be the leaves of $R$, enumerated in left-to-right lexicographic order. We also enumerate by $\alpha_1, \alpha_2, \ldots, \alpha_{n-1}$ the $n-1$ upward-opening angles in left-to-right lexicographic order. The appropriate labels for the case of $\HFm(\Sigma(2, 7, 15))$ are shown above in Figure~\ref{fig:2}. We denote by $\gr(v_i)$ the grading of vertex $v_i$ and by $\gr(\alpha_i)$ the grading of the vertex supporting angle $\alpha_i$. 

The generators (over $\ff[U]$) of our complex are given as follows. For each leaf $v_i$, we place a single generator in grading $\gr(v_i)$, which by abuse of notation we also denote by $v_i$. Note that $v_i$ is a generator of our complex over $\ff[U]$, so that as an abelian group we are introducing an entire tower of generators $\ff[U]v_i$. For each angle $\alpha_i$, we similarly place a single generator in grading $\gr(\alpha_i) + 1$, denoting this by $\alpha_i$. (The gradings of these generators are motivated by the notion of a \textit{geometric complex}, to appear in Section \ref{sec:2.4}.) We define our differential to be identically zero on the $v_i$, and set 
\[
\partial \alpha_i = U^{(\gr(v_i)-\gr(\alpha_i))/2}v_i + U^{(\gr(v_{i+1})-\gr(\alpha_i))/2}v_{i+1}
\]
on the $\alpha_i$, extending to the entire complex linearly and $U$-equivariantly. We call this complex the \textit{standard complex of $R$} and denote it by $C_*(R)$. The standard complex associated to the graded root given in Figure~\ref{fig:2} is shown in Figure~\ref{fig:3}. 

\begin{figure}[h!]
\begin{tikzpicture}[thick,scale=0.9]%

	
	\draw (-0.7, 0) node[circle, draw, fill=black!100, inner sep=0pt, minimum width=4pt, label = $v_3$] {} -- (-0.7, -1) node[circle, draw, fill=black!100, inner sep=0pt, minimum width=4pt] {};
	\draw (-0.7, -1) node[circle, draw, fill=black!100, inner sep=0pt, minimum width=4pt] {} -- (-0.7, -2) node[circle, draw, fill=black!100, inner sep=0pt, minimum width=4pt] {};
	\draw (-0.7, -2) node[circle, draw, fill=black!100, inner sep=0pt, minimum width=4pt] {} -- (-0.7, -3) node[circle, draw, fill=black!100, inner sep=0pt, minimum width=4pt] {};
	\draw (-0.7, -3) node[circle, draw, fill=black!100, inner sep=0pt, minimum width=4pt] {} -- (-0.7, -4) node[circle, draw, fill=black!100, inner sep=0pt, minimum width=4pt] {};
	\node[text width=0.1cm] at (-0.7, -4.5) {\vdots};
	
	\draw (0.7, 0) node[circle, draw, fill=black!100, inner sep=0pt, minimum width=4pt, label = $v_4$] {} -- (0.7, -1) node[circle, draw, fill=black!100, inner sep=0pt, minimum width=4pt] {};
	\draw (0.7, -1) node[circle, draw, fill=black!100, inner sep=0pt, minimum width=4pt] {} -- (0.7, -2) node[circle, draw, fill=black!100, inner sep=0pt, minimum width=4pt] {};
	\draw (0.7, -2) node[circle, draw, fill=black!100, inner sep=0pt, minimum width=4pt] {} -- (0.7, -3) node[circle, draw, fill=black!100, inner sep=0pt, minimum width=4pt] {};
	\draw (0.7, -3) node[circle, draw, fill=black!100, inner sep=0pt, minimum width=4pt] {} -- (0.7, -4) node[circle, draw, fill=black!100, inner sep=0pt, minimum width=4pt] {};
	\node[text width=0.1cm] at (0.7, -4.5) {\vdots};
	
	\draw (-2.1, -1) node[circle, draw, fill=black!100, inner sep=0pt, minimum width=4pt, label = $v_2$] {} -- (-2.1, -2) node[circle, draw, fill=black!100, inner sep=0pt, minimum width=4pt] {};
	\draw (-2.1, -2) node[circle, draw, fill=black!100, inner sep=0pt, minimum width=4pt] {} -- (-2.1, -3) node[circle, draw, fill=black!100, inner sep=0pt, minimum width=4pt] {};
	\draw (-2.1, -3) node[circle, draw, fill=black!100, inner sep=0pt, minimum width=4pt] {} -- (-2.1, -4) node[circle, draw, fill=black!100, inner sep=0pt, minimum width=4pt] {};
	\node[text width=0.1cm] at (-2.1, -4.5) {\vdots};
	
	\draw (2.1, -1) node[circle, draw, fill=black!100, inner sep=0pt, minimum width=4pt, label = $v_5$] {} -- (2.1, -2) node[circle, draw, fill=black!100, inner sep=0pt, minimum width=4pt] {};
	\draw (2.1, -2) node[circle, draw, fill=black!100, inner sep=0pt, minimum width=4pt] {} -- (2.1, -3) node[circle, draw, fill=black!100, inner sep=0pt, minimum width=4pt] {};
	\draw (2.1, -3) node[circle, draw, fill=black!100, inner sep=0pt, minimum width=4pt] {} -- (2.1, -4) node[circle, draw, fill=black!100, inner sep=0pt, minimum width=4pt] {};
	\node[text width=0.1cm] at (2.1, -4.5) {\vdots};
	
	\draw (-3.5, -3) node[circle, draw, fill=black!100, inner sep=0pt, minimum width=4pt, label = $v_1$] {} -- (-3.5, -4) node[circle, draw, fill=black!100, inner sep=0pt, minimum width=4pt] {};
	\node[text width=0.1cm] at (-3.5, -4.5) {\vdots};
	
	\draw (3.5, -3) node[circle, draw, fill=black!100, inner sep=0pt, minimum width=4pt, label = $v_6$] {} -- (3.5, -4) node[circle, draw, fill=black!100, inner sep=0pt, minimum width=4pt] {};
	\node[text width=0.1cm] at (3.5, -4.5) {\vdots};
	

	\draw (0, -1.5) node[circle, draw, fill=black!100, inner sep=0pt, minimum width=4pt, label = $\alpha_3$] {} -- (0, -2.5) node[circle, draw, fill=black!100, inner sep=0pt, minimum width=4pt] {};
	\draw (0, -2.5) node[circle, draw, fill=black!100, inner sep=0pt, minimum width=4pt] {} -- (0, -3.5) node[circle, draw, fill=black!100, inner sep=0pt, minimum width=4pt] {};
	\node[text width=0.1cm] at (0, -4) {\vdots};	
	
	\draw (0, -1.5)  -- (-0.7, -2) [dashed];
	\draw (0, -1.5)  -- (0.7, -2) [dashed];
	\draw (0, -2.5)  -- (-0.7, -3) [dashed];
	\draw (0, -2.5)  -- (0.7, -3) [dashed];
	\draw (0, -3.5)  -- (-0.7, -4) [dashed];
	\draw (0, -3.5)  -- (0.7, -4) [dashed];

	\draw (-1.4, -1.5) node[circle, draw, fill=black!100, inner sep=0pt, minimum width=4pt, label = $\alpha_2$] {} -- (-1.4, -2.5) node[circle, draw, fill=black!100, inner sep=0pt, minimum width=4pt] {};
	\draw (-1.4, -2.5) node[circle, draw, fill=black!100, inner sep=0pt, minimum width=4pt] {} -- (-1.4, -3.5) node[circle, draw, fill=black!100, inner sep=0pt, minimum width=4pt] {};
	\node[text width=0.1cm] at (-1.4, -4) {\vdots};	
	
	\draw (-1.4, -1.5)  -- (-2.1, -2) [dashed];
	\draw (-1.4, -1.5)  -- (-0.7, -2) [dashed];
	\draw (-1.4, -2.5)  -- (-2.1, -3) [dashed];
	\draw (-1.4, -2.5)  -- (-0.7, -3) [dashed];
	\draw (-1.4, -3.5)  -- (-2.1, -4) [dashed];
	\draw (-1.4, -3.5)  -- (-0.7, -4) [dashed];

		\draw (1.4, -1.5) node[circle, draw, fill=black!100, inner sep=0pt, minimum width=4pt, label = $\alpha_4$] {} -- (1.4, -2.5) node[circle, draw, fill=black!100, inner sep=0pt, minimum width=4pt] {};
	\draw (1.4, -2.5) node[circle, draw, fill=black!100, inner sep=0pt, minimum width=4pt] {} -- (1.4, -3.5) node[circle, draw, fill=black!100, inner sep=0pt, minimum width=4pt] {};
	\node[text width=0.1cm] at (1.4, -4) {\vdots};	
	
	\draw (1.4, -1.5)  -- (2.1, -2) [dashed];
	\draw (1.4, -1.5)  -- (0.7, -2) [dashed];
	\draw (1.4, -2.5)  -- (2.1, -3) [dashed];
	\draw (1.4, -2.5)  -- (0.7, -3) [dashed];
	\draw (1.4, -3.5)  -- (2.1, -4) [dashed];
	\draw (1.4, -3.5)  -- (0.7, -4) [dashed];

	\draw (-2.8, -3.5) node[circle, draw, fill=black!100, inner sep=0pt, minimum width=4pt, label = $\alpha_1$] {};
	\node[text width=0.1cm] at (-2.8, -4) {\vdots};
	\draw (-2.8, -3.5)  -- (-2.1, -4) [dashed];
	\draw (-2.8, -3.5)  -- (-3.5, -4) [dashed];

	\draw (2.8, -3.5) node[circle, draw, fill=black!100, inner sep=0pt, minimum width=4pt, label = $\alpha_5$] {};
	\node[text width=0.1cm] at (2.8, -4) {\vdots};
	\draw (2.8, -3.5)  -- (2.1, -4) [dashed];
	\draw (2.8, -3.5)  -- (3.5, -4) [dashed];

\end{tikzpicture}
\caption{Standard complex associated to the $\ff[U]$-module $\HFm(\Sigma(2, 7, 15))$. \hspace{\textwidth}Solid lines represent the action of $U$; dashed lines represent the action of $\partial$. (Taken from \cite{DM}.)} \label{fig:3}
\label{figex}
\end{figure}
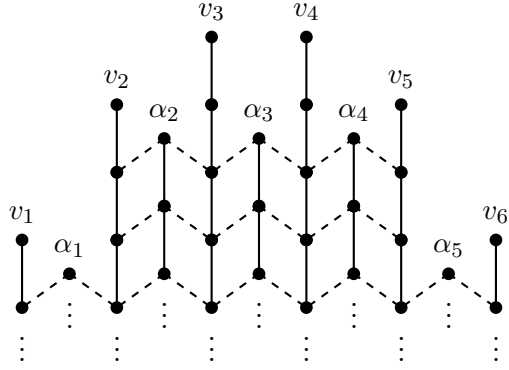

It is easy to check that the homology of $C_*(R)$ is isomorphic to $\Hm(R)$; see \cite[Lemma 4.1]{DM}. There is an obvious involution $J_0$ on $C_*(R)$ given by sending $v_i$ to $v_{n-i+1}$, $\alpha_i$ to $\alpha_{n-i}$, and extending linearly and $U$-equivariantly. This induces the involution on $\Hm(R)$ corresponding to the symmetry of $R$. Given a symmetric chain complex $(C, J_0)$ of this form, we define the \textit{absolute value} of the index of a generator to be $\min(i, n-i+1)$ for leaf generators $v_i$ and $\min(i, n-i)$ for angle generators $\alpha_i$. Where no confusion is possible, we denote this by $|i|$.

As discussed in the beginning of the subsection, we now have:

\begin{theorem}\cite[Theorem 4.5]{DM}
\label{thm:2.8}
Let $Y$ be an almost-rational plumbed three-manifold, and let $\s$ be a self-conjugate $\spinc$-structure on $Y$. Let $R$ be the symmetric graded root corresponding to $\HFm(Y, \s)$ afforded by the isomorphism between lattice homology and Heegaard Floer homology. Then $(\CFm(Y, \s), \inv)$ and $(C_*(R), J_0)$ are equivalent $\inv$-complexes.
\end{theorem}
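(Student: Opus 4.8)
The plan is to separate the statement into two pieces: identifying the underlying $\ff[U]$-chain complex of $\CFm(Y,\s)$, and then identifying the involution $\inv$. For the first piece, recall that $\CFm(Y,\s)$ is a finitely generated free chain complex over the graded principal ideal domain $\ff[U]$ with $U^{-1}H_*\cong\ff[U,U^{-1}]$. By the structure theory of such complexes (Smith normal form over $\ff[U]$), $\CFm(Y,\s)$ is $\ff[U]$-equivariantly chain homotopy equivalent to a \emph{reduced} model --- one whose differential is divisible by $U$ --- and this reduced model is unique up to graded isomorphism, hence determined by the graded $\ff[U]$-module $\HFm(Y,\s)$. By Theorem~\ref{thm:2.7}, $\HFm(Y,\s)\cong\Hm(R)$; and by \cite[Lemma 4.1]{DM} the complex $C_*(R)$ is itself a reduced free complex with $H_*(C_*(R))\cong\Hm(R)$ (indeed $C_*(R)$ is a two-term free resolution of $\Hm(R)$). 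Consequently the underlying complexes $\CFm(Y,\s)$ and $C_*(R)$ are chain homotopy equivalent as $\ff[U]$-modules.

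Transporting $\inv$ across such an equivalence, it suffices to show: any grading-preserving chain involution $\inv'$ on $C_*(R)$ whose induced map on $\Hm(R)$ agrees with that of $J_0$ is equivalent, as an $\inv$-complex, to $(C_*(R),J_0)$. By \cite[Theorem 3.1]{DM} the action $\inv_*$ on $\HFm(Y,\s)\cong\Hm(R)$ is exactly the reflection of the symmetric graded root, which is the action of $J_0$, so the hypothesis of this reduction is met. The point is then that graded-root complexes are extremely rigid: in each grading $C_*(R)$ has only a controlled set of generators, the differential has the special ``merge'' shape $\partial\alpha_i=U^{a_i}v_i+U^{b_i}v_{i+1}$, and $\partial$ is injective. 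Writing each $\inv'(v_i)$ as $v_{n+1-i}$ plus $U$-multiples of the other leaf generators, and similarly for $\inv'(\alpha_i)$, and inducting on the number of leaves of $R$ --- peeling off the outermost interchanged pair $v_1,v_n$ together with the corresponding angle generators --- one conjugates $\inv'$ by an $\ff[U]$-equivariant chain automorphism of $C_*(R)$ into $J_0$, solving for the necessary chain homotopies grading by grading; here injectivity of $\partial$ and the relation $(\inv')^2\simeq\id$ guarantee that no obstructions survive.

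An alternative (and arguably more conceptual) route avoids the direct rigidity argument by passing through lattice homology. By the isomorphism between lattice homology and Heegaard Floer homology of \cite{Plumbed}, \cite{NemethiOS}, \cite{Nem}, refined to the involutive setting using the conjugation symmetry $x\mapsto K-x$ on the lattice (as in \cite{Dai} and Section~3 of \cite{DM}), the $\inv$-complex $(\CFm(Y,\s),\inv)$ is equivalent to N\'emethi's lattice chain complex of the plumbing graph equipped with its conjugation involution. For an almost-rational graph, N\'emethi's reduction theorem collapses this complex, through a sequence of chain homotopy equivalences, onto the one-dimensional complex built from the $\tau$-function of the graph --- which is precisely $C_*(R)$ --- and the symmetry of the $\tau$-function becomes $J_0$.

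On either route the main obstacle is the same: controlling the \emph{involution} up to equivalence, not merely its action on homology. General $\inv$-complexes are very far from rigid, so the almost-rational hypothesis must enter essentially --- through the combinatorial rigidity of graded-root complexes in the algebraic approach, or through the need to perform N\'emethi's reduction \emph{equivariantly}, carrying a conjugation-compatible homotopy equivalence through each step of the collapse, in the lattice-theoretic approach. Once this is done, the matching of $\inv'$ with $J_0$ rather than some other reflection is immediate from \cite[Theorem 3.1]{DM}.
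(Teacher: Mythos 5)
Your overall plan is sound and matches the structure of the actual argument in \cite{DM}: first identify the underlying $\ff[U]$-chain complex with $C_*(R)$ using the uniqueness of reduced free models over $\ff[U]$ and Theorem~\ref{thm:2.7}, then identify the transported involution with $J_0$ using \cite[Theorem 3.1]{DM} for its action on homology and a rigidity property of graded-root complexes for the chain-level statement. You also correctly flag that the entire difficulty is concentrated in the second step, since general $\iota$-complexes are nowhere near this rigid.

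However, your execution of the second step has two problems, one of framing and one of substance. First, you aim to \emph{conjugate} $\inv'$ into $J_0$ by an automorphism of $C_*(R)$. That is strictly stronger than what equivalence of $\iota$-complexes requires: by Definition~\ref{def:2.2} with $F=G=\id_{C_*(R)}$, it suffices to exhibit a chain homotopy $\inv'\simeq J_0$, and conjugacy of chain maps is not implied by chain homotopy, so the argument as stated is aiming at a claim that need not hold. Second, the induction ``peeling off $v_1, v_n$ and the adjacent angles'' is not justified: the span of $\{v_1, v_n, \alpha_1, \alpha_{n-1}\}$ is not a $\partial$-subcomplex, the complementary piece $C_*(R')$ is a subcomplex but is not preserved by $\inv'$ (e.g.\ $\inv'(v_2)$ may contain $U$-multiples of $v_1$ and $v_n$), and it is not shown that the ``peeled'' data assembles into a homotopy. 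The invocation of $(\inv')^2 \simeq \id$ as removing obstructions is not explained and, as it turns out, is not needed.

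The argument you want is more direct and avoids induction entirely. Observe that in $C_*(R)$ the leaf generators $v_i$ sit in gradings $\equiv\tau\pmod 2$ while the angle generators $\alpha_i$ sit in gradings $\equiv\tau+1$, and $U$ has even degree; hence any degree-zero $\ff[U]$-equivariant chain map $\phi$ on $C_*(R)$ automatically maps the $\ff[U]$-span of the $v_i$ to itself and the span of the $\alpha_i$ to itself. Moreover $\partial$ vanishes on the span of the $v_i$ and is injective on the span of the $\alpha_i$. Now set $\phi=\inv'+J_0$; by \cite[Theorem 3.1]{DM}, $\phi_*=0$, so each $\phi(v_i)\in\operatorname{im}\partial$. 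Choose $h(v_i)$ in the angle span with $\partial h(v_i)=\phi(v_i)$, extend $U$-equivariantly, and set $h\equiv 0$ on the angle span. Then $(\partial h+h\partial)(v_i)=\phi(v_i)$ immediately, and applying $\partial$ to $(h\partial)(\alpha_i)$ and to $\phi(\alpha_i)$ gives the same answer, so injectivity of $\partial$ on the angle span forces $(\partial h+h\partial)(\alpha_i)=\phi(\alpha_i)$. Thus $\inv'\simeq J_0$, and the identity map furnishes the required equivalence $(C_*(R),\inv')\simeq(C_*(R),J_0)$. This is the rigidity you were reaching for, but it is an immediate consequence of the structure of $C_*(R)$, not something that requires peeling leaves or conjugation.

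A smaller remark on step one: ``Smith normal form over $\ff[U]$'' is not quite the right mechanism (Smith normal form normalizes a matrix, not the isomorphism type of a complex, and requires care in the graded setting where only homogeneous units, namely nonzero scalars, may be used). What one really wants is the uniqueness of minimal (reduced) free resolutions over the graded local ring $\ff[U]$, together with the fact that every finitely generated graded $\ff[U]$-module has projective dimension at most one; this shows that a reduced free complex is determined up to graded isomorphism by its homology module, and that $\CFm(Y,\s)$ is $\ff[U]$-chain homotopy equivalent to $C_*(R)$. Your lattice-homology alternative route is plausible and is closer in spirit to how \cite{DM} first obtains the involution on homology, but carrying N\'emethi's collapse through equivariantly is a nontrivial undertaking and as written is a sketch, not a proof.
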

\noindent
We will often refer to a symmetric graded root $R$, its standard complex $C_*(R)$, and its associated $\inv$-complex $(C_*(R), J_0)$ interchangeably. 

\subsection{Geometric Complexes}\label{sec:2.4}\hspace*{\fill} \\
\indent
Given Theorems~\ref{thm:2.3} and \ref{thm:2.8}, it is clear that to understand the image of $\Theta_{AR}$ in $\Inv$, we must understand tensor products of complexes of the form $C_*(R)$ and their duals. To this end, we describe a simple geometric/combinatorial representation of $C_*(R)$ which will be useful in later sections. For each generator $v_i$ of $C_*(R)$, we draw a single 0-cell as in Figure~\ref{fig:4}, placing these on a horizontal line from left-to-right in the same order as they appear in $C_*(R)$. For each generator $\alpha_i$ of $C_*(R)$, we then draw a 1-cell connecting the two 0-cells corresponding to the terms in $\partial \alpha_i$. We also remember the gradings $\gr(v_i)$ and $\gr(\alpha_i)$ of the generators associated to each of these cells. The generators of $C_*(R)$ are thus identified with the cellular generators of this (rather trivial) cell complex, tensored with $\ff[U]$. Under this correspondence, we see that the differential on $C_*(R)$ is the usual cellular differential, except twisted by powers of $U$ according to the gradings of the relevant generators:
\begin{equation}\label{eqn:2}
\partial (\square_e) = \sum _{\square_{e-1} \in \text{ bdry} (\square_e)} U^{(\gr(\square_{e-1})-\gr(\square_e))/2}\square_{e-1}.
\end{equation}
Here, $\square_e$ is a $e$-cell, and we sum over the $(e-1)$-cells appearing in the usual cellular boundary of $\square_e$. We have written $\gr(\square_e)$ to mean either $\gr(v_i)$ or $\gr(\alpha_i)$, depending on whether $d$ is zero or one. The twisted differential should be thought of exactly as the usual cellular differential, only multiplied by the necessary powers of $U$ so as to be of degree $-1$ with respect to the grading coming from $C_*(R)$. We call this cell complex (over $\ff[U]$) with its twisted differential (extended linearly and $U$-equivariantly) the \textit{geometric realization} of $C_*(R)$. See Figure~\ref{fig:4}.

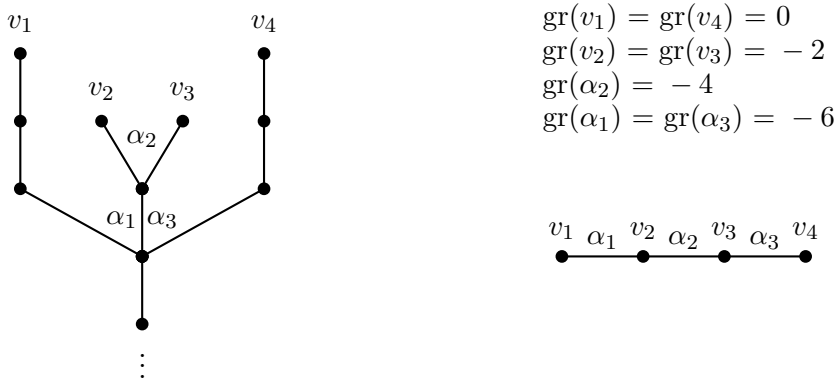
\begin{figure}[h!]
\begin{tikzpicture}[thick,scale=0.9]%

	\node[text width=0.1cm] at (0, -2.5) {\vdots};
	\draw (0, 0) node[circle, draw, fill=black!100, inner sep=0pt, minimum width=4pt] {} -- (0, -1) node[circle, draw, fill=black!100, inner sep=0pt, minimum width=4pt] {};
	\draw (0, -1) node[circle, draw, fill=black!100, inner sep=0pt, minimum width=4pt] {} -- (0, -2) node[circle, draw, fill=black!100, inner sep=0pt, minimum width=4pt] {};
	
	\draw (0, 0) node[circle, draw, fill=black!100, inner sep=0pt, minimum width=4pt] {} -- (-0.6, 1) node[circle, draw, fill=black!100, inner sep=0pt, minimum width=4pt] {};
	\draw (0, 0) node[circle, draw, fill=black!100, inner sep=0pt, minimum width=4pt] {} -- (0.6, 1) node[circle, draw, fill=black!100, inner sep=0pt, minimum width=4pt] {};
	
	\draw (0, -1) node[circle, draw, fill=black!100, inner sep=0pt, minimum width=4pt] {} -- (-1.8, 0) node[circle, draw, fill=black!100, inner sep=0pt, minimum width=4pt] {};
	\draw (0, -1) node[circle, draw, fill=black!100, inner sep=0pt, minimum width=4pt] {} -- (1.8, 0) node[circle, draw, fill=black!100, inner sep=0pt, minimum width=4pt] {};
	
	\draw (-1.8, 1) node[circle, draw, fill=black!100, inner sep=0pt, minimum width=4pt] {};
	\draw (1.8, 1) node[circle, draw, fill=black!100, inner sep=0pt, minimum width=4pt] {};
	\draw (-1.8, 2) node[circle, draw, fill=black!100, inner sep=0pt, minimum width=4pt] {};
	\draw (1.8, 2) node[circle, draw, fill=black!100, inner sep=0pt, minimum width=4pt] {};
	
	\draw (-1.8, 0) -- (-1.8, 1);
	\draw (-1.8, 1) -- (-1.8, 2);
	\draw (1.8, 0) -- (1.8, 1);
	\draw (1.8, 1) -- (1.8, 2);
	
	\node[label = $v_1$] at (-1.8,2) {};
	\node[label = $v_4$] at (1.8,2) {};
	\node[label = $v_2$] at (-0.6,1) {};
	\node[label = $v_3$] at (0.6,1) {};
	
	\node[label = $\alpha_2$] at (0,0.3) {};
	\node[label = $\alpha_1$] at (-0.3,-1+0.1) {};
	\node[label = $\alpha_3$] at (0.3,-1+0.1) {};
	
	\draw (-1.8+8, -1) node[circle, draw, fill=black!100, inner sep=0pt, minimum width=4pt, label = $v_1$] {};
	\draw (-0.6+8, -1) node[circle, draw, fill=black!100, inner sep=0pt, minimum width=4pt, label = $v_2$] {};
	\draw (0.6+8, -1) node[circle, draw, fill=black!100, inner sep=0pt, minimum width=4pt, label = $v_3$] {};
	\draw (1.8+8, -1) node[circle, draw, fill=black!100, inner sep=0pt, minimum width=4pt, label = $v_4$] {};		

	\draw (-1.8+8, -1) -- (1.8+8, -1);
	
	\node[label = $\alpha_2$] at (8, -1-0.2) {};
	\node[label = $\alpha_1$] at (8-1.2, -1-0.2) {};
	\node[label = $\alpha_3$] at (8+1.2, -1-0.2) {};
	
	\node[label = $\gr(v_1) \text{ = } \gr(v_4) \text{ = } 0$] at (7.75, 2) {};
	\node[label = $\gr(v_2) \text{ = } \gr(v_3) \text{ = } -2$] at (8, 1.5) {};		
	\node[label = $\gr(\alpha_2) \text{ = } -4$] at (7.175, 1) {};
	\node[label = $\gr(\alpha_1) \text{ = } \gr(\alpha_3) \text{ = } -6$] at (8.075, 0.5) {};	
			
\end{tikzpicture}
\caption{Geometric realization (right) of a graded root (left). The leaves $v_1$ and $v_4$ in this example have grading zero. It is helpful to think of the picture on the right as being obtained by projecting the graded root on the left onto a horizontal line.} \label{fig:4}
\end{figure}

More generally, we can formalize the above discussion as follows. Let $C$ be a cell complex (in the usual sense), and let $\gr$ be a function from the cells of $C$ to a coset of $2\mathbb{Z}$ in $\mathbb{Q}$ such that $\gr(\square_e) \leq \gr(\square_{e-1})$ whenever $\square_{e-1}$ is in the cellular boundary of $\square_e$. We give $C \otimes \ff[U]$ a grading by defining a $e$-cell $\square_e$ to have grading $\gr(\square_e) + e$, and declaring $U$ to have degree $-2$.\footnote{We use the term ``grading" to refer both to the actual chain complex grading and the grading function $\gr$. The former is given by $\gr + e$, while we reserve the notation $\gr$ for the latter.} Then the twisted differential (\ref{eqn:2}) is of degree $-1$ and turns $C \otimes \ff[U]$ into a chain complex. We call a complex defined in this way a \textit{geometric complex} and refer to the underlying cell complex $C$ (with its usual cellular differential) as the \textit{skeleton} of such a complex. The preceding discussion states that every standard complex has a geometric realization whose skeleton is homeomorphic to a line. Note that the passage from a geometric complex to its skeleton may be obtained by setting $U = 1$.

\begin{remark}
This is almost identical to the construction of lattice homology given in \cite[Definition 3.1.4]{Nem}. See \cite[Theorem 3.1.12]{Nem} for a discussion of the relationship between this construction and the sublevel set construction of e.g.\ \cite[Definition 3.1.11]{Nem}, \cite[Section 7]{DM}.
\end{remark}

If $X$ and $Y$ are two geometric complexes, then there is an obvious geometric complex which realizes $X \otimes Y$, defined as follows. The skeleton of $X \otimes Y$ is given by the usual product of the skeleton of $X$ and the skeleton of $Y$. We then define the grading of a $e$-cell $x \times y$ by setting $\gr(x \times y) = \gr(x) + \gr(y)$. This construction evidently extends to the product of any number of geometric complexes, so that if $R_1, \ldots, R_k$ are a set of $k$ symmetric graded roots, then $C_*(R_1) \otimes \cdots \otimes C_*(R_k)$ has an obvious geometric realization as a product $k$-dimensional rectangular cell complex. The action of $J_0 = J_0 \otimes \cdots \otimes J_0$ on this complex corresponds to reflection through the center of its skeleton. See Figure~\ref{fig:5}. 

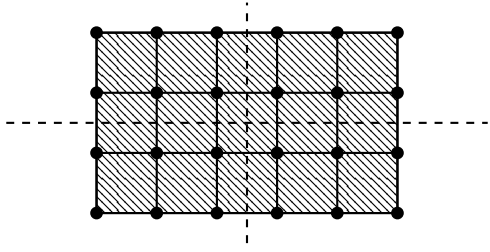
\begin{figure}[h!]
\begin{tikzpicture}[thick,scale=0.8]%

	\draw[dashed] (-4, 0) -- (4, 0);
	\draw[dashed] (0, -2) -- (0, 2);

	\draw (-2.5, 1.5) node[circle, draw, fill=black!100, inner sep=0pt, minimum width=4pt] {};
	\draw (-1.5, 1.5) node[circle, draw, fill=black!100, inner sep=0pt, minimum width=4pt] {};
	\draw (-0.5, 1.5) node[circle, draw, fill=black!100, inner sep=0pt, minimum width=4pt] {};
	\draw (0.5, 1.5) node[circle, draw, fill=black!100, inner sep=0pt, minimum width=4pt] {};
	\draw (1.5, 1.5) node[circle, draw, fill=black!100, inner sep=0pt, minimum width=4pt] {};
	\draw (2.5, 1.5) node[circle, draw, fill=black!100, inner sep=0pt, minimum width=4pt] {};
		
	\draw (-2.5, 0.5) node[circle, draw, fill=black!100, inner sep=0pt, minimum width=4pt] {};
	\draw (-1.5, 0.5) node[circle, draw, fill=black!100, inner sep=0pt, minimum width=4pt] {};
	\draw (-0.5, 0.5) node[circle, draw, fill=black!100, inner sep=0pt, minimum width=4pt] {};
	\draw (0.5, 0.5) node[circle, draw, fill=black!100, inner sep=0pt, minimum width=4pt] {};	
	\draw (1.5, 0.5) node[circle, draw, fill=black!100, inner sep=0pt, minimum width=4pt] {};
	\draw (2.5, 0.5) node[circle, draw, fill=black!100, inner sep=0pt, minimum width=4pt] {};
	
	\draw (-2.5, -0.5) node[circle, draw, fill=black!100, inner sep=0pt, minimum width=4pt] {};
	\draw (-1.5, -0.5) node[circle, draw, fill=black!100, inner sep=0pt, minimum width=4pt] {};
	\draw (-0.5, -0.5) node[circle, draw, fill=black!100, inner sep=0pt, minimum width=4pt] {};
	\draw (0.5, -0.5) node[circle, draw, fill=black!100, inner sep=0pt, minimum width=4pt] {};	
	\draw (1.5, -0.5) node[circle, draw, fill=black!100, inner sep=0pt, minimum width=4pt] {};
	\draw (2.5, -0.5) node[circle, draw, fill=black!100, inner sep=0pt, minimum width=4pt] {};
	
	\draw (-2.5, -1.5) node[circle, draw, fill=black!100, inner sep=0pt, minimum width=4pt] {};
	\draw (-1.5, -1.5) node[circle, draw, fill=black!100, inner sep=0pt, minimum width=4pt] {};
	\draw (-0.5, -1.5) node[circle, draw, fill=black!100, inner sep=0pt, minimum width=4pt] {};
	\draw (0.5, -1.5) node[circle, draw, fill=black!100, inner sep=0pt, minimum width=4pt] {};	
	\draw (1.5, -1.5) node[circle, draw, fill=black!100, inner sep=0pt, minimum width=4pt] {};
	\draw (2.5, -1.5) node[circle, draw, fill=black!100, inner sep=0pt, minimum width=4pt] {};
	
	\draw (-2.5, 1.5) -- (2.5, 1.5);
	\draw (-2.5, 0.5) -- (2.5, 0.5);
	\draw (-2.5, -0.5) -- (2.5, -0.5);
	\draw (-2.5, -1.5) -- (2.5, -1.5);
	
	\draw (-2.5, 1.5) -- (-2.5, -1.5);
	\draw (-1.5, 1.5) -- (-1.5, -1.5);
	\draw (-0.5, 1.5) -- (-0.5, -1.5);
	\draw (0.5, 1.5) -- (0.5, -1.5);
	\draw (1.5, 1.5) -- (1.5, -1.5);
	\draw (2.5, 1.5) -- (2.5, -1.5);
	
	\draw[pattern=north west lines] (-2.5, -1.5) rectangle (2.5,1.5);
\end{tikzpicture}
\caption{Skeleton of the tensor product of two roots ($k = 2$). Points (0-cells) represent generators $x = x_1 \otimes x_2$ where both $x_1$ and $x_2$ are leaf generators; edges (1-cells) represent generators where precisely one of $x_1$ and $x_2$ is an angle generator; squares (2-cells) represent generators where both of the $x_i$ are angle generators. Note the symmetry about each coordinate axis.} \label{fig:5}
\end{figure}


\section{Local Equivalence}
\label{sec:3}
We now turn to a discussion of local equivalence, beginning with the case of a single graded root. Following \cite[Section 6]{DM}, we define a special class of symmetric graded roots which we call \textit{monotone}, as follows. Let $(C, J_0)$ be a symmetric chain complex as in Section~\ref{sec:2.3}, with $2n$ leaf generators and $2n-1$ angle generators. Thus the leaf and angle generators of $C$ occur in symmetric pairs, with the exception of the single $J_0$-invariant generator $\alpha_n$. If the inequalities
\begin{enumerate}
\item $\gr(v_1) > \gr(v_2) > \cdots > \gr(v_n)$, 
\item $\gr(\alpha_1) < \gr(\alpha_2) < \cdots < \gr(\alpha_n)$, and 
\item $\gr(v_n) \geq \gr(\alpha_n)$
\end{enumerate}
hold, then we say that the symmetric graded root $M$ corresponding to the homology of $C$ is a \textit{monotone root of type $n$}. To emphasize the $2n$ parameters $\gr(v_i)$ and $\gr(\alpha_i)$, we write 
\[
M = M(\gr(v_1), \gr(\alpha_1); \ldots; \gr(v_n), \gr(\alpha_n)).
\]
Examples of monotone roots are given in Figure~\ref{fig:6}. Note that if $\gr(v_n) > \gr(\alpha_n),$ then $M$ itself has $2n$ leaves, while if $\gr(v_n) = \gr(\alpha_n)$, we are in the slightly degenerate case where on the level of homology $v_n$ and $v_{n+1}$ are collapsed into a single $J_0$-invariant leaf. In the special case that a monotone root $M = M(h_1, r_1)$ has only two parameters, we will sometimes refer to $M$ as a \textit{projective root}, following \cite[Fact 5.6]{Stoffregen}. In this case it will be convenient to define\footnote{This convention differs from $\tilde{\delta}$ in \cite{Stoffregen2} in that the present $\tilde{\delta}$ is twice that of \cite{Stoffregen2}.}
\[
\tilde{\delta}(M) = h_1 - r_1.
\]
The graded roots $Y_i$ of Figure~\ref{fig:1} are projective, with $\tilde{\delta}(Y_i) = 2i$ for all $i \geq 1$.

\begin{figure}[h!]
\center
\includegraphics[scale=1]{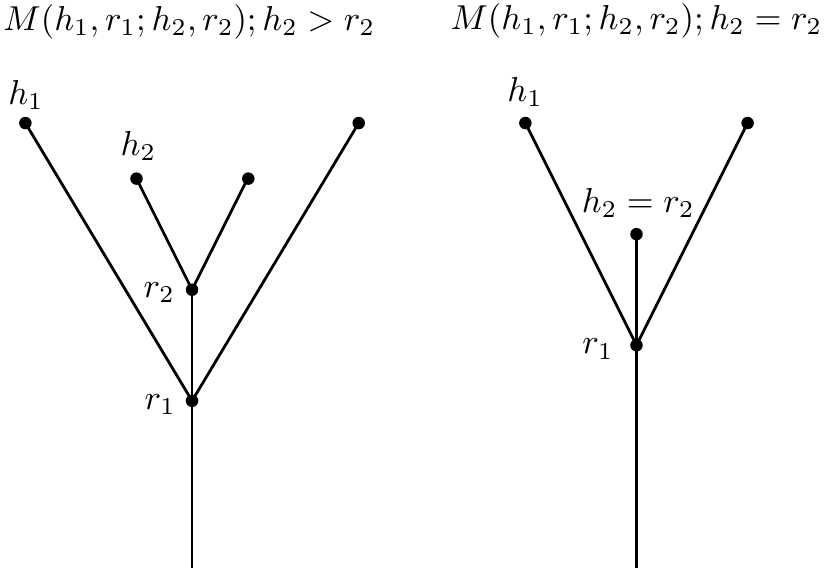}
\caption{Examples of monotone roots $(n = 2)$. Most of the vertices in the diagram have not been drawn explicitly.} \label{fig:6}
\end{figure}

We will also be interested in a slightly more general class of symmetric graded roots, defined as follows. If $C$ is a complex as above for which the slightly modified conditions 
\begin{enumerate}
\item $\gr(v_1) \geq \gr(v_2) \geq \cdots \geq \gr(v_n)$, 
\item $\gr(\alpha_1) \leq \gr(\alpha_2) \leq \cdots \leq \gr(\alpha_n)$, and
\item $\gr(v_n) \geq \gr(\alpha_n)$
\end{enumerate}
hold, then we say that the homology $M$ of $C$ is a \textit{weakly monotone root of type $n$}, and we parameterize it similarly as above. 

It should be noted that occasionally the parameterized complex $C$ does not quite coincide with the standard complex of $M$, at least as defined in Section~\ref{sec:2.3}. For example, if $M$ is a (strictly) monotone root for which $\gr(v_n) = \gr(\alpha_n)$, then the standard complex of $M$ has $2n-1$ leaf generators, while $C$ as defined above has $2n$ leaf generators. Similarly, if $M = M(h_1, r_1; \ldots; h_n, r_n)$ is a weakly monotone root, then appending any number of pairs $(r_n, r_n)$ to the end of the parameter list yields a larger complex whose weakly monotone root
\[
M = M(h_1, r_1; \ldots; h_n, r_n; r_n, r_n; \ldots; r_n, r_n)
\]
is easily checked to be isomorphic to the original, but whose associated complex $C$ has more generators. However, it is easy to see that all of these complexes are homotopy equivalent (via a $J_0$-equivariant chain homotopy), and that they are in fact homotopic to the usual standard complex of $M$. Hence when discussing chain complexes of monotone roots, we will freely use either the parameterized complex $C$ or the usual standard complex, whichever is more convenient.

The importance of the class of monotone roots lies in the following pair of theorems established in \cite{DM}:
\begin{theorem}\cite[Theorem 6.1]{DM}
\label{thm:3.1}
Every symmetric graded root is locally equivalent to some (strictly) monotone root (which is in fact a subroot of the original).
\end{theorem}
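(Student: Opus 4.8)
The plan is to prove Theorem~\ref{thm:3.1} by induction on the number of leaves of the symmetric graded root $R$, reducing at each step by an explicit ``pruning'' move that deletes a redundant symmetric pair of branches without changing the local equivalence class. Since the statement concerns only graded roots, it is enough to work with the standard complex $(C_*(R), J_0)$ of Section~\ref{sec:2.3} and the numerical data $\{\gr(v_i)\}$, $\{\gr(\alpha_i)\}$; the topological input of Theorem~\ref{thm:2.8} is not needed here. Note that for \emph{any} symmetric graded root the localized homology $U^{-1}H_*(C_*(R))$ is $\ff[U, U^{-1}]$ concentrated in a single grading coset, so every $\ff[U]$-equivariant, grading-preserving chain map hitting a $U$-nontorsion class is automatically an isomorphism after inverting $U$; thus the only thing to monitor when producing a local equivalence is $J_0$-equivariance, which (as explained below) will in fact hold on the nose for the maps we build.

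First I would record the combinatorial shape of monotonicity: a symmetric graded root with $2n$ leaves is weakly monotone of type $n$ precisely when the leaf gradings, read from each end toward the central $J_0$-invariant angle $\alpha_n$, are weakly decreasing, and the angle gradings, read the same way, are weakly increasing. If $R$ is not weakly monotone, then one of these patterns fails. I would clear the two kinds of defect in order. \emph{Leaf defects} first: if $\gr(v_{i-1}) < \gr(v_i)$ for some $i \leq n$, then the outer leaf $v_{i-1}$ is dominated by its inner neighbor, and I delete $v_{i-1}$ together with the angle on its center side, rerouting the angle on its outer side to join $v_{i-2}$ and $v_i$ directly (when $i-1 = 1$ there is no outer angle and one simply drops $v_1, \alpha_1$); doing the same to the mirror pair yields a symmetric subroot $R'$ with two fewer leaves. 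The inclusion $C_*(R') \hookrightarrow C_*(R)$ and the retraction sending the deleted leaf to $U^{(\gr(v_i) - \gr(v_{i-1}))/2} v_i$ (exponent $\geq 0$ precisely because of the defect) and the deleted angles to $0$ are $\ff[U]$-equivariant chain maps commuting strictly with $J_0$, hence a pair of local maps, so $R$ is locally equivalent to $R'$ in $\Inv_\Q$. Once no leaf defect remains we have $\gr(v_1) \geq \cdots \geq \gr(v_n)$ globally, and then \emph{angle defects} $\gr(\alpha_{i-1}) \geq \gr(\alpha_i)$ are removed by an entirely analogous move (delete $v_i$ and the higher of its two adjacent valleys, reroute the lower one), whose retraction exponent is now nonnegative thanks to the already-established leaf ordering; one checks this move cannot reintroduce a leaf defect.

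Iterating, the process terminates (each step strictly decreases the leaf count) at a weakly monotone subroot. To upgrade from weakly to strictly monotone I would collapse equalities $\gr(v_i) = \gr(v_{i+1})$ or $\gr(\alpha_i) = \gr(\alpha_{i+1})$ by the elementary $J_0$-equivariant homotopy equivalences already discussed after the definition of weakly monotone roots, applied symmetrically, leaving a strictly monotone root $M$ which by construction sits inside $R$; the degenerate boundary case $\gr(v_n) = \gr(\alpha_n)$ is exactly the collapsed-central-leaf situation permitted in the definition of a monotone root of type $n$.

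I expect the main obstacle to be the index bookkeeping in the pruning step: one must correctly identify which of the two valleys adjacent to a defective leaf to delete and which to reroute, so that the rerouted angle still has grading $\leq$ both of its new endpoints and so that the retraction is genuinely a chain map, and one must treat separately the cases where the defect sits at the extreme leaf $v_1$ or abuts the central angle $\alpha_n$. The ordering of the two reductions (leaf defects before angle defects) is essential and needs justification, as does the claim that no new defects are created along the way. None of this is deep, but checking every variant — left half versus straddling the center, strict inequality versus equality — is where the real work lies.
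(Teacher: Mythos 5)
This theorem is cited from [DM, Theorem 6.1]; the present paper does not contain a proof, so there is no in-text argument to compare against. Your iterative-pruning strategy is the same kind of greedy algorithm that [DM, Section 6] uses, and the global framework (work with $C_*(R)$, track only gradings and $J_0$-equivariance, lift cellular maps in the spirit of Lemma~\ref{lem:3.3}) is sound. However, the leaf-defect pruning step has a genuine gap as stated. You delete $v_{i-1}$ together with the center-side angle $\alpha_{i-1}$, keep the outer angle $\alpha_{i-2}$ rerouted at its original grading, and assert that the inclusion $C_*(R')\hookrightarrow C_*(R)$ is a chain map. But any grading-preserving lift of the rerouted $\alpha'_{i-2}$ into $C_*(R)$ must be $\alpha_{i-2}+U^{(\gr(\alpha_{i-1})-\gr(\alpha_{i-2}))/2}\alpha_{i-1}$ in order to cancel the $v_{i-1}$ term of $\partial\alpha_{i-2}$, and the required $U$-exponent is nonnegative only if $\gr(\alpha_{i-1})\geq\gr(\alpha_{i-2})$. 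A leaf defect does not force this: take $\gr(v_{i-2})=0$, $\gr(\alpha_{i-2})=-4$, $\gr(v_{i-1})=-2$, $\gr(\alpha_{i-1})=-6$, $\gr(v_i)=0$. Here $\gr(v_{i-1})<\gr(v_i)$ but $\gr(\alpha_{i-2})>\gr(\alpha_{i-1})$, so the inclusion you describe does not exist; indeed your $R'$ is not a subtree of $R$ in this case, since the honest subtree has the angle at $\min(\gr(\alpha_{i-2}),\gr(\alpha_{i-1}))$. The correct move must delete whichever adjacent angle has the \emph{higher} grading, and then the retraction sends $v_{i-1}$ toward the neighbor across the deleted angle, which may be $v_{i-2}$ rather than $v_i$; the grading-preserving condition then becomes $\gr(v_{i-2})\geq\gr(v_{i-1})$ rather than the defect inequality. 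This forces one to clear defects from the outside inward (so that outer leaves already have larger gradings before one prunes an inner one), and it makes your two-phase ``leaf defects first, angle defects second'' ordering untenable: a single pruning step may resolve a leaf and an angle defect at once, and a fixed center-side/outer-side convention cannot be imposed in advance. You explicitly flag this bookkeeping as ``the real work'' in your closing paragraph, but it is not merely bookkeeping — as written, both the choice of which angle to delete and the direction of the retraction are wrong in a nonvacuous range of cases, so the proof is incomplete.
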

\begin{theorem}\cite[Theorem 6.2]{DM}
\label{thm:3.2}
Two (strictly) monotone roots are locally equivalent if and only if they have the same set of ordered parameters.
\end{theorem}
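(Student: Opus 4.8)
The ``if'' direction is immediate: if $M$ and $M'$ have the same ordered parameters then their (parameterized) standard complexes $C_*(M)$ and $C_*(M')$ are literally isomorphic as $\ff[U]$-complexes equipped with the involution $J_0$, hence locally equivalent. So the content is the ``only if'' direction, and I would aim to prove the (formally stronger) statement that a local equivalence between monotone roots can always be upgraded to an isomorphism of standard complexes --- equivalently, that the ordered parameters $n$, $(h_i)$, $(r_i)$ of a monotone root are recoverable from its local equivalence class. As context, recall that $d$, $\du$, and $\dl$ are local equivalence invariants, but a priori these give far too few numbers; the real input must come from a direct analysis of local maps.

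My plan is exactly that direct analysis. Fix monotone roots $M = M(h_1,r_1;\ldots;h_n,r_n)$ and $M' = M(h_1',r_1';\ldots;h_m',r_m')$, and local maps $F\colon C_*(M)\to C_*(M')$ and $G\colon C_*(M')\to C_*(M)$. The main work is a normalization step: I claim $F$ and $G$ can be modified, by $\ff[U]$- and $J_0$-equivariant chain homotopies, into a ``standard triangular form'' with respect to the index labelling of the generators. To get started, grading preservation forces $F(v_1)$ (which has grading $h_1$) into the grading-$h_1$ part of $C_*(M')$; since $h_1'$ is the top leaf grading of $M'$ this gives $h_1 \le h_1'$, and symmetrically (using $G$) $h_1' \le h_1$, so $h_1 = h_1'$ and that grading-$h_1$ piece is spanned over $\ff$ by exactly $v_1'$ and $v_{2m}'$. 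Hence $F(v_1)\in\{v_1',\,v_{2m}',\,v_1'+v_{2m}'\}$; it cannot be $0$, or $F$ would fail to be an isomorphism after inverting $U$; and after post-composing with $J_0'$ we may assume $F(v_1)=v_1'$ or $v_1'+v_{2m}'$. Tracking $\partial$-compatibility through $\alpha_1$ (and again using that $F$ is an isomorphism after inverting $U$) then forces $\gr(\alpha_1)=\gr(\alpha_1')$, i.e.\ $r_1=r_1'$. Having matched the outermost symmetric pair $(h_1,r_1)=(h_1',r_1')$, one shows that $F$ in this normal form carries the subcomplex on the ``inner'' generators $v_2,\ldots,v_{2n-1},\alpha_2,\ldots,\alpha_{2n-2}$ into the analogous inner subcomplex of $C_*(M')$, inducing local maps between the monotone roots $M(h_2,r_2;\ldots;h_n,r_n)$ and $M(h_2',r_2';\ldots;h_m',r_m')$ of one lower type, and the result follows by induction on $n$ (with the base case being a projective root).

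The step I expect to be the main obstacle is the normalization --- in particular, showing that no data ``leaks'' between branches of different absolute index, so that the inductive restriction to the inner subcomplex is legitimate. The monotonicity inequalities $h_1>\cdots>h_n$, $r_1<\cdots<r_n$, and $h_n\ge r_n$ ought to be exactly what is needed: any off-diagonal component of $F$ must, for grading reasons, be divisible by a strictly positive power of $U$, and one must check that such components can be absorbed into a homotopy without spoiling $J_0$-equivariance or the $U^{-1}$-homology condition. Two further points need care. First, the unique $J_0$-invariant angle generator $\alpha_n$ (resp.\ $\alpha_m'$) behaves differently from the symmetric pairs and must be handled separately at the bottom of the induction; this is also where one sees why the type itself (the number of parameters) is a local equivalence invariant. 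Second, in the degenerate case $\gr(v_n)=\gr(\alpha_n)$ the parameterized complex $C$ differs from the literal standard complex, so one should work modulo the padding moves $M(\ldots;h_n,r_n)\mapsto M(\ldots;h_n,r_n;r_n,r_n)$ recorded above and extract the parameters only at the very end. An alternative to normalizing the maps would be to pin down each parameter as a numerical invariant of the local class --- e.g.\ via the $U^N$-behaviour of $J_0$-invariant versus $J_0$-anti-invariant homology classes, or via $\du$ and $\dl$ of small tensor products with the projective roots $Y_i$ --- but this rests on the same control over local maps, so I do not expect it to be easier.
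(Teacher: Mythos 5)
The paper does not prove this statement; it simply cites \cite[Theorem 6.2]{DM}, so there is no internal proof to compare against, and your proposal has to stand on its own. The ``if'' direction and the opening normalization ($h_1=h_1'$, hence $F(v_1)\in\{v_1',v_{2m}',v_1'+v_{2m}'\}$, with $v_1'+v_{2m}'$ excluded because $[v_1']+[v_{2m}']$ is $U$-torsion) are fine. However, there are two genuine gaps in the ``only if'' direction, both of which you flag as worries but neither of which you resolve, and I do not think either is as easy to fill as the proposal suggests.

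First, the claim that ``tracking $\partial$-compatibility through $\alpha_1$'' forces $r_1=r_1'$ does not follow from what is written. With $F(v_1)=v_1'$, the element $F(\alpha_1)$ (which lives in odd grading $r_1+1$) need not have an $\alpha_1'$-component at all. The $v_1'$-term of $F(\partial\alpha_1)$ is $U^{(h_1-r_1)/2}v_1'+U^{(h_2-r_1)/2}\cdot(\text{coeff.\ of }v_1'\text{ in }F(v_2))$, and since $h_2<h_1=h_1'$, the element $F(v_2)$ is allowed by grading to contain $U^{(h_1'-h_2)/2}v_1'$; when it does, the two $v_1'$-terms cancel and $\partial$-compatibility no longer pins $F(\alpha_1)$ to $\alpha_1'$ --- it can be a combination of $U^{k_j}\alpha_j'$ with $|j|\ge 2$, which only constrains $r_1\le r_m'$. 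The $U^{-1}$-homology condition does not help here because $\alpha_1$ lives in the $U$-torsion part. So as stated the argument only establishes $r_1\le r_1'$ in one of two cases, and the other case is not ruled out.

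Second, and relatedly, the inductive restriction to inner subcomplexes is not legitimate even after normalization. The subcomplex generated by $v_2,\ldots,v_{2n-1},\alpha_2,\ldots,\alpha_{2n-2}$ is indeed a subcomplex, but $F$ need not send it into the corresponding subcomplex of $C_*(M')$: exactly the $U^{(h_1'-h_2)/2}v_1'$-term above ``leaks'' out. Your proposed fix --- absorb such off-diagonal components into a $J_0$-equivariant chain homotopy --- does not obviously work: to homotope away a $U^{(h_1'-h_2)/2}v_1'$-term in $F(v_2)$ one needs $H(v_2)$ to contain $U^k\alpha_1'$ with $k=(r_1'-h_2)/2\ge 0$, i.e.\ $r_1'\ge h_2$, and this inequality is not a consequence of monotonicity of $M$ and $M'$. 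There is also the separate technicality (which you mention) that $F$ is only $J_0$-equivariant up to homotopy, so you cannot assume $F(v_{2n})$ is determined by $F(v_1)$. So the induction as described does not close; you would need either a more global argument (e.g.\ analyzing $G\circ F$ and $F\circ G$ as local self-maps, or extracting each parameter as a numerical invariant from the $U$-torsion structure of the tensor product with test complexes) or a substantially more careful normalization lemma than the one sketched.
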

\noindent
Here, we again abuse notation slightly and say that two symmetric graded roots are locally equivalent if there is a local equivalence between their $\inv$-complexes. We will similarly make reference to the tensor product of two graded roots to mean the tensor product of their standard (or parameterized) complexes, and so on.

According to Theorems~\ref{thm:3.1} and \ref{thm:3.2}, monotone roots parameterize the local equivalence classes of (positively oriented) AR manifolds in $\Inv_{\Q}$. Indeed, if one is given two graded roots, to see if they are locally equivalent it suffices to extract their monotone subroots and determine if they are the same. (A straightforward algorithm for doing this is given in \cite[Section 6]{DM}.) However, it turns out that there are non-trivial local equivalences between tensor products of monotone roots which introduce unexpected relations in $\Inv_{\Q}$. The remainder of this section will be devoted to establishing some of these equivalences.

We begin by describing a convenient way of defining maps between geometric complexes. Let $X$ and $Y$ be two geometric complexes graded by the same coset of $2\mathbb{Z}$ in $\mathbb{Q}$. Suppose that we have a cellular map $f$ from the skeleton of $X$ to the skeleton of $Y$. Thus $f$ takes each $e$-cell in the skeleton of $X$ to a sum of $e$-cells in the skeleton of $Y$, and preserves the usual cellular differential.\footnote{Here, we abuse language slightly and refer to any chain map between the cellular chain complexes of skeleta as a ``cellular map", in order to distinguish these from chain maps between $\ff[U]$-complexes. Thus, $e$-cells are allowed to be taken to sums of $e$-cells and/or zero. In all of the examples considered in this paper, these maps will have a clear topological interpretation (which somewhat justifies their terminology).} We wish to know if $f$ can be promoted to a true grading-preserving chain map between $X$ and $Y$. Suppose that 
\[
f(x) = \sum_i y_i,
\]
where $x$ is a $e$-cell in the skeleton of $X$ and the $y_i$ are distinct $e$-cells in the skeleton of $Y$. Then there is an obvious choice of grading-preserving lift given by
\begin{equation}\label{eqn:3}
\widetilde{f}(x) = \sum_i U^{(\gr(y_i) - \gr(x))/2} y_i,
\end{equation}
where now we think of $x$ and the $y_i$ as actual generators in $X$ and $Y$ and the same set of $y_i$ appears in the sum. In order for this to be well-defined, we must check that each $e$-cell $y_i$ appearing in $f(x)$ satisfies the inequality $\gr(y_i) \geq \gr(x)$. If this condition holds for every $x$ in the skeleton of $X$, then $f$ evidently lifts to a grading-preserving map from $X$ to $Y$, and it is easily verified that this preserves the twisted differential (\ref{eqn:2}). We formalize this in the following lemma:

\begin{lemma}\label{lem:3.3}
Let $X$ be a geometric complex such that:
\begin{enumerate}
\item[(a)] The skeleton of $X$ is contractible, and
\item[(b)] There is an involution $J_0$ on the skeleton of $X$ respecting the usual cellular differential.
\end{enumerate}
Then $(X, J_0)$ is an $\inv$-complex, where $J_0$ is the induced map on $X$ coming from the involution on its skeleton. In sufficiently low gradings, the $\ff[U]$-homology of $X$ is generated by $U$-powers of $[x]$, where $[x]$ is the (appropriately homogenized) generator of the cellular homology of the skeleton of $X$.

Now suppose that $X$ and $Y$ are two such geometric complexes graded by the same coset of $2\Z$ in $\Q$. Let $f$ be a cellular map between their skeleta satisfying the following conditions:
\begin{enumerate}
\item[(a)] (Lifting condition.) $\gr(y_i) \geq \gr(x)$ holds whenever $y_i$ appears in $f(x)$,
\item[(b)] The map $f$ is $J_0$-equivariant, and
\item[(c)] The map $f$ takes each 0-cell in the skeleton of $X$ to the sum of an odd number of 0-cells in the skeleton of $Y$.
\end{enumerate}
Then the lift (\ref{eqn:3}) of $f$ is a local map from $X$ to $Y$.
\end{lemma}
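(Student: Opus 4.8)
The plan is to treat the two assertions separately, with the statement about $\widetilde f$ being the substantive one; the first assertion is essentially a bookkeeping check that a geometric complex with contractible skeleton has the expected localized homology.

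For the first assertion, I would fix a reference $0$-cell $x_0$ of the skeleton of $X$ and pass to $\ff[U, U^{-1}]$-coefficients, rescaling each cell $\square_e$ to $U^{(\gr(\square_e)-\gr(x_0))/2}\square_e$. All of these exponents are integers because $\gr$ takes values in a single coset of $2\Z$, and a direct substitution into (\ref{eqn:2}) shows that in the rescaled generators the twisted differential becomes exactly the untwisted cellular differential of the skeleton, tensored with $\ff[U, U^{-1}]$. Since the skeleton is contractible its cellular homology over $\ff$ is concentrated in degree zero, so $U^{-1}H_*(X)\cong\ff[U, U^{-1}]$, generated by (a $U$-power of) the class of $x_0$; the grading assertions in Definition~\ref{def:2.1} then read off from the same rescaling. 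As the skeleton is finite, $X$ is finitely generated and free over $\ff[U]$, so the localization map $H_*(X)\to U^{-1}H_*(X)$ is an isomorphism in all sufficiently low gradings, which gives the claimed description of $H_*(X)$ there. Finally, $J_0$ is a grading-preserving cellular automorphism of the skeleton, so its $\ff[U]$-linear extension is a grading-preserving chain map with $J_0^2=\id$ on the nose; hence $(X,J_0)$ is an $\inv$-complex.

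For the second assertion, I would first record that $\widetilde f$ is well-defined as a map of $\ff[U]$-modules: the exponents $(\gr(y_i)-\gr(x))/2$ in (\ref{eqn:3}) are nonnegative integers precisely by the lifting condition (a) on $f$ together with the coset condition on $\gr$, and a one-line grading count shows each term $U^{(\gr(y_i)-\gr(x))/2}y_i$ lies in the same chain grading as $x$, so $\widetilde f$ is grading-preserving. To see that $\widetilde f$ is a chain map, I would identify it --- after the rescaling above, over $\ff[U, U^{-1}]$ --- with $f\otimes\id$ between the untwisted cellular complexes of the two skeleta; since $f$ is a cellular chain map this is a chain map over $\ff[U, U^{-1}]$, and by (a) it carries the $\ff[U]$-lattice $X$ into the $\ff[U]$-lattice $Y$, hence restricts to an honest $\ff[U]$-chain map $X\to Y$. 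For compatibility with the involutions, since $f$ is $J_0$-equivariant on the skeleta and $\gr$ is $J_0$-invariant, one checks directly from (\ref{eqn:3}) that $\widetilde f\circ J_0=J_0\circ\widetilde f$ exactly (no $U$-powers intervene because $J_0$ preserves $\gr$); in particular the required $\ff[U]$-equivariant homotopy-commutation holds.

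The step I expect to be the real content is showing that $\widetilde f$ induces an isomorphism on $U^{-1}$-homology. By the first part, $U^{-1}H_*(X)$ and $U^{-1}H_*(Y)$ are free $\ff[U, U^{-1}]$-modules of rank one graded by the same coset, so any grading-preserving $\ff[U, U^{-1}]$-module map between them is either zero or an isomorphism; it therefore suffices to show that $\widetilde f$ does not annihilate the fundamental class. Under the rescaling identifications, the fundamental class of $U^{-1}H_*(X)$ is represented by the reference $0$-cell $x_0$, and $\widetilde f_*$ becomes the map on cellular homology induced by $f$. Now $f(x_0)$ is a sum of $0$-cells of the (connected, since contractible) skeleton of $Y$, in which all $0$-cells represent the generator of $H_0$ over $\ff$; hence the class of $f(x_0)$ equals the number of these $0$-cells, reduced mod $2$, times the generator, and by condition (c) this number is odd. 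So $\widetilde f_*$ carries the fundamental class to the fundamental class up to a $U$-power, and is an isomorphism. This is exactly where (c) enters; the lifting and equivariance conditions (a) and (b) on $f$ are used only to make $\widetilde f$ a well-defined $\ff[U]$-chain map commuting with the involutions, and everything else is formal manipulation of the $U$-twisting. Combining these steps shows $\widetilde f$ is a local map from $X$ to $Y$.
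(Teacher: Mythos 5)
Your proof is correct and follows essentially the same approach as the paper's (the paper's own proof is terse, leaving the verifications as ``straightforward to check'' and citing \cite[Lemma 7.1]{DM}). The rescaling device you introduce — passing to $\ff[U,U^{-1}]$ and normalizing each cell by $U^{(\gr(\square_e)-\gr(x_0))/2}$ to reduce the twisted differential to the untwisted cellular one — is a clean way to make precise the identifications the paper takes for granted, and the reduction of $U$-nontorsion generation to the mod-$2$ count of $0$-cells (condition (c)) is exactly the content the paper points to.
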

\begin{proof}
Given that the skeleton of $X$ is contractible, it is straightforward to check that the homology of $X$ in sufficiently negative gradings (congruent to $\tau$ mod 2) is isomorphic to $\ff$, and is generated in these gradings by the sum of any odd number of 0-cells (multiplied by appropriate powers of $U$). As $J_0$ obviously lifts to an involution on $X$, this easily implies the first claim. For the second claim, the fact that $f$ is an isomorphism after inverting the action of $U$ follows from the requirement that $f$ takes each 0-cell in $X$ to an odd number of 0-cells in $Y$. See also \cite[Lemma 7.1]{DM}. 
\end{proof}

\begin{example}\label{ex:3.4}
We illustrate the use of Lemma~\ref{lem:3.3} with the following example. Let $x$ and $y$ be two (negative) even integers for which $x < y < 0$. We establish the local equivalence 
\[
M(0, x; y, y) + M(0, x - y) = M(0, x).
\]
One can of course instead write $M(0, x; y, y) = M(0, x) - M(0, x-y)$; this expresses the type-2 root on the left as the tensor product of a type-1 root and the dual of a type-1 root. The monotone roots in question are displayed in Figure~\ref{fig:7}, together with their standard complexes. 

\begin{figure}[h!]
\center
\includegraphics[scale=1]{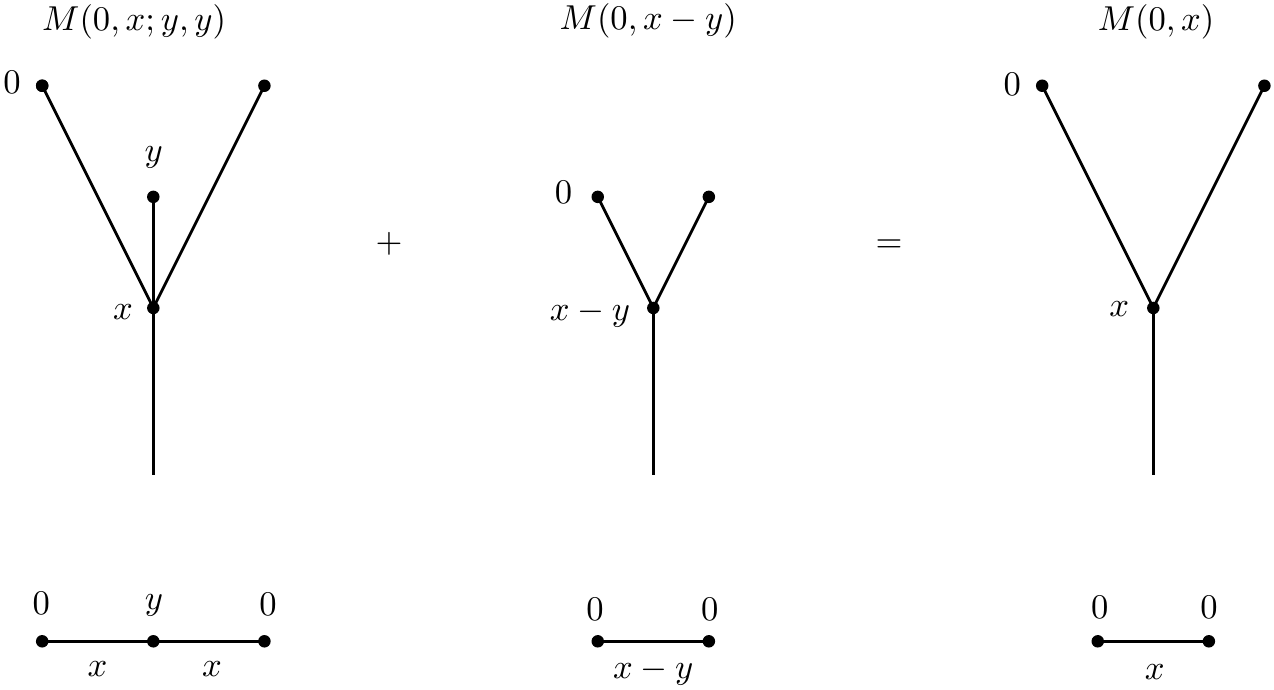}
\caption{Monotone roots of Example~\ref{ex:3.4} (above) and the geometric realizations of their standard complexes (below). Gradings of leaf and angle generators are marked in both parts of the diagram.} \label{fig:7}
\end{figure}

We have also displayed the geometric realization of $M(0, x; y, y) + M(0, x - y)$ on the left-hand side in Figure~\ref{fig:8}, together with the gradings of its cells. (Both 2-cells have grading $x + (x - y) = 2x - y$.) Note that the middle vertical 1-cell in the diagram corresponds to the tensor product of the $J_0$-invariant 0-cell of $M(0, x; y, y)$ with the $J_0$-invariant 1-cell of $M(0, x-y)$ and thus has grading $y + (x-y) = x$. For brevity we will refer to the skeleton of $M(0, x; y, y) + M(0, x - y)$ as $A$ and the skeleton of $M(0, x)$ as $B$. 

\begin{figure}[h!]
\center
\includegraphics[scale=1]{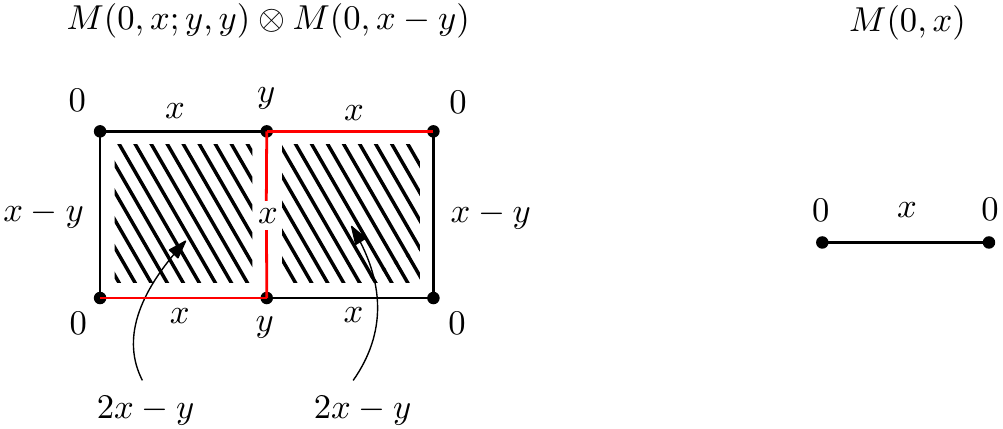}
\caption{Geometric realizations of $M(0, x; y, y) \otimes M(0, x - y)$ (left) and $M(0, x)$ (right).} \label{fig:8}
\end{figure}

We now describe a local map from $M(0, x; y, y) + M(0, x - y)$ to $M(0, x)$ by defining a cellular map between their skeleta. We send the bottom-left, top-left, and top-middle 0-cells of $A$ to the left-hand 0-cell in $B$. We similarly map the bottom-right, top-right, and bottom-middle 0-cells of $A$ to the right-hand 0-cell in $B$. Note that since $y < 0$, this correspondence satisfies the lifting condition of Lemma~\ref{lem:3.3}. We now send the three red 1-cells in $A$ to the single 1-cell in $B$, and send all the other 1- and 2-cells to zero. It is easily checked that this is a $J_0$-equivariant cellular map (over $\mathbb{Z}/2\Z$). Applying Lemma~\ref{lem:3.3}, we obtain the local map in question.

Defining a local map from $M(0, x)$ to $M(0, x; y, y) + M(0, x - y)$ is even simpler: we send the left-hand 0-cell in $B$ to the bottom-left 0-cell in $A$, and the right-hand 0-cell in $B$ to the top-right 0-cell in $A$. We then send the single 1-cell in $B$ to the sum of the three red 1-cells in $A$. Applying Lemma~\ref{lem:3.3}, we obtain the desired local equivalence. \qed
\end{example}

We now proceed to the main theorem of the section. In order to formulate this, we will need to set up a bit of extra notation. Let $X$ and $Y$ be two weakly monotone roots. We say that two roots $X'$ and $Y'$ are obtained from $X$ and $Y$ via a \textit{swap operation} if they are related as in Figure~\ref{fig:9}. More precisely, suppose that $X$ is a type-$m$ root, and fix any $1 \leq a \leq m$. Consider the set of leaves in $X$ whose indices are strictly greater than $a$ in absolute value. Let $S_X$ be the subgraph of $X$ formed by taking the union over the paths connecting these leaves to the vertex supporting the angle $\alpha_a$ in $X$. This subgraph is marked in blue in Figure~\ref{fig:9}. Letting $Y$ be a type-$n$ root and $1 \leq b \leq n$, we define $S_Y$ similarly. We then form $X'$ and $Y'$ by cutting out $S_X$ and $S_Y$ from $X$ and $Y$, respectively, and interchanging them. We say that $X'$ and $Y'$ are a \textit{valid swapped pair} if $X'$ and $Y'$ are still weakly monotone.

\begin{figure}[h!]
\center
\includegraphics[scale=0.8]{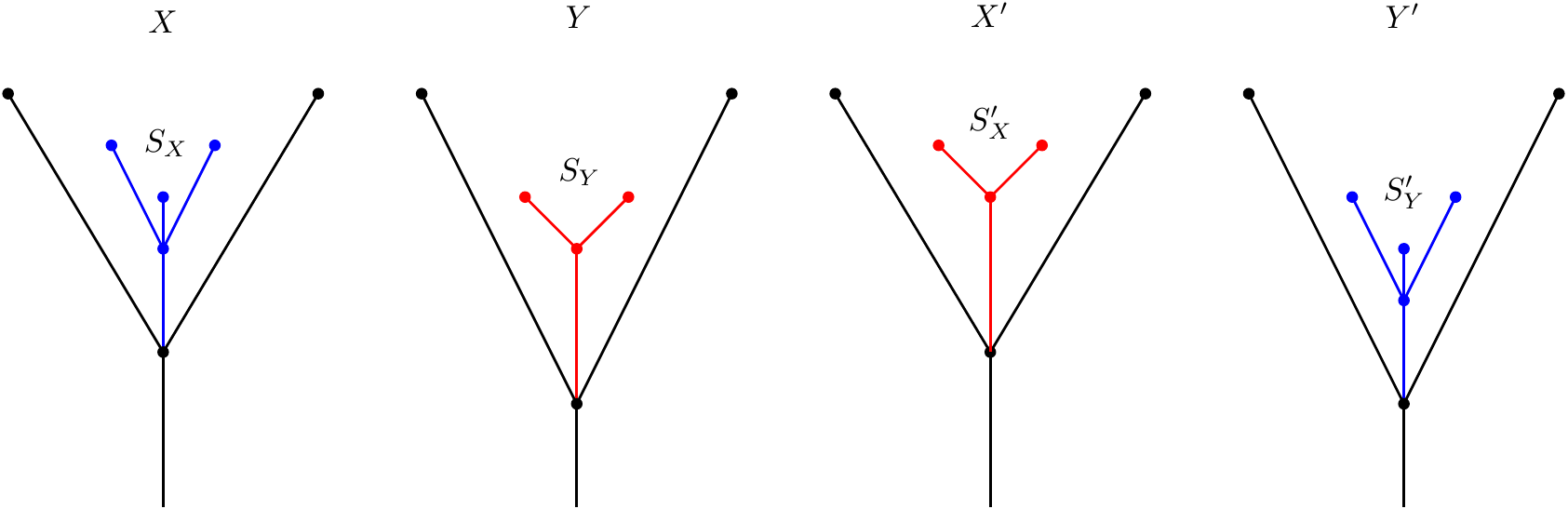}
\caption{Swap operation on two monotone roots $X$ and $Y$. In this example, $m = 3$, $n =2$, and $a = b = 1$.} \label{fig:9}
\end{figure}

To see what this does to the parameters of $X$ and $Y$, let
\begin{align*}
X &= M(p_1, s_1; \ldots; p_m, s_m) \text{ and} \\
Y &= M(q_1, t_1; \ldots; q_n, t_n).
\end{align*}
Fix any pair of integers $1 \leq a \leq m$ and $1 \leq b \leq n$, and let $\Delta = s_a - t_b$. Then the swapped roots $X'$ and $Y'$ are defined by the parameters
\begin{align*}
X' &= M(p_1, s_1; \ldots; p_a, s_a; q_{b+1} + \Delta, t_{b+1} + \Delta; \ldots; q_n + \Delta, t_n + \Delta) \text{ and} \\
Y' &= M(q_1, t_1; \ldots; q_b, t_b; p_{a+1} - \Delta, s_{a+1} - \Delta; \ldots; p_m - \Delta, s_m - \Delta).
\end{align*}
The condition that $X'$ and $Y'$ are a valid swapped pair is equivalent to the pair of inequalities
\begin{align}\label{eqn:4}
\begin{split}
p_a &\geq q_{b+1} + \Delta \text{ and }\\
q_b &\geq p_{a+1} - \Delta,
\end{split}
\end{align}
since the other monotone inequalities are automatically satisfied. (For example, $s_a \leq t_{b+1} + \Delta = s_a + (t_{b+1} - t_b)$, and so on.) Note that we allow the swapping operation even if one of the subgraphs is empty, which occurs if $a = m$ or $b = n$. In this case, the monotonicity condition for one of the swapped roots is then trivial.

We now have the main result of this section:

\begin{theorem}
\label{thm:3.5}
Let $X$ and $Y$ be two weakly monotone roots, and suppose $X'$ and $Y'$ are a valid swapped pair obtained from $X$ and $Y$. Then $X \otimes Y$ is locally equivalent to $X' \otimes Y'$. 
\end{theorem}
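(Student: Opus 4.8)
The plan is to prove this by exhibiting explicit local maps in both directions, working entirely with the geometric realizations and applying Lemma~\ref{lem:3.3}; this is the higher-dimensional analogue of the computation carried out in Example~\ref{ex:3.4}. First I would fix coordinates: write the skeleton of $X$ as a path with $0$-cells $v_1,\dots,v_{2m}$ and $1$-cells $\alpha_1,\dots,\alpha_{2m-1}$, and similarly for $Y$, so that the skeleton of $X\otimes Y$ is the product grid whose cells are the products $x\times y$ with $\gr(x\times y)=\gr(x)+\gr(y)$, and likewise for $X'\otimes Y'$. The swap operation cuts each of $X$ and $Y$ into a \emph{base} part (everything at the $\alpha_a$- resp.\ $\alpha_b$-vertex and below, together with the outer leaves and angles of absolute index $\le a$, resp.\ $\le b$) and a \emph{cap} part $S_X$, $S_Y$ (the leaves of absolute index $>a$, resp.\ $>b$, and their connecting angles); by definition $X'$ is $\mathrm{base}(X)$ with $S_Y$ grafted on at grading $s_a$ after an upward shift by $\Delta=s_a-t_b$, while $Y'$ is $\mathrm{base}(Y)$ with $S_X$ grafted on at grading $t_b$ after a downward shift by $\Delta$.

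Next I would record the grading bookkeeping that makes the maps possible. Cutting the product grid of $X\otimes Y$ along the two coordinate levels through the $\alpha_a$- and $\alpha_b$-vertices splits it into four blocks: $\mathrm{base}(X)\times\mathrm{base}(Y)$, $\mathrm{base}(X)\times S_Y$, $S_X\times\mathrm{base}(Y)$, and $S_X\times S_Y$. The grid of $X'\otimes Y'$ splits analogously into $\mathrm{base}(X)\times\mathrm{base}(Y)$, $\mathrm{base}(X)\times(S_X-\Delta)$, $(S_Y+\Delta)\times\mathrm{base}(Y)$, and the corner block $(S_Y+\Delta)\times(S_X-\Delta)$. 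Because the two grafting shifts are $+\Delta$ and $-\Delta$, the evident cellwise identifications $\mathrm{base}\times\mathrm{base}\leftrightarrow\mathrm{base}\times\mathrm{base}$ and $S_X\times S_Y\leftrightarrow(S_Y+\Delta)\times(S_X-\Delta)$ (swapping the two tensor factors and matching corresponding cells) are \emph{grading- and dimension-preserving}. On these two blocks I define $f$ to be these identifications. The remaining ``mixed'' content is genuinely reshuffled — in $X\otimes Y$ the $S_Y$ cells sit over $\mathrm{base}(X)$, whereas in $X'\otimes Y'$ they sit over $\mathrm{base}(Y)$, and symmetrically for $S_X$ — so on the mixed block $\mathrm{base}(X)\times S_Y$ of $X\otimes Y$ the map $f$ must fold/collapse cells downward onto the adjacent base blocks along the $U$-tower, exactly as the three red $1$-cells of Example~\ref{ex:3.4} were all sent onto a single $1$-cell. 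I would choose these folding maps to be cellular (respecting the cellular differential), $J_0$-equivariant, and compatible along the shared boundary levels with the identifications already fixed on the other blocks. The map $g$ in the other direction is built symmetrically, reversing the roles of the mixed blocks.

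It then remains to check the hypotheses of Lemma~\ref{lem:3.3}. The $0$-cell condition is immediate: the distinguished corner $0$-cell $v_1\otimes v_1$ generating the $U$-inverted homology lies in the $\mathrm{base}\times\mathrm{base}$ block (since $|1|=1\le a$) and is sent to the corresponding corner $0$-cell of $X'\otimes Y'$, i.e.\ to a single $0$-cell. $J_0$-equivariance follows because the whole block decomposition, together with the shift $\Delta$, is preserved under the simultaneous reflections of $X\otimes Y$ and $X'\otimes Y'$. The crucial point is the lifting condition $\gr(y_i)\ge\gr(x)$ whenever $y_i$ appears in $f(x)$: on the two identification blocks it is an equality, and on the folding blocks it reduces, after tracing through the $\pm\Delta$ grafting shifts, precisely to the swap-validity inequalities~\eqref{eqn:4} — the inequalities $p_a\ge q_{b+1}+\Delta$ and $q_b\ge p_{a+1}-\Delta$ are exactly what guarantees that when a cap cell in a mixed block is folded down into an adjacent base block, its image has grading no smaller than its own. (When $a=m$ or $b=n$ one of the caps is empty, the corresponding mixed block disappears, and the swap is essentially a reparametrization.) Once $f$ and $g$ are pinned down, checking that they are genuine $\ff[U]$-chain maps and that $f\circ J_0\simeq J_0'\circ f$ and $g\circ J_0'\simeq J_0\circ g$ up to $\ff[U]$-equivariant homotopy is a routine application of Lemma~\ref{lem:3.3}.

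The step I expect to be the main obstacle is constructing the folding maps on the two mixed blocks so that all four requirements hold simultaneously — cellular chain map, $J_0$-equivariance, compatibility along the glued boundary levels, and the lifting condition under~\eqref{eqn:4}. This is a combinatorial matching problem between a ``staircase'' cellular map and a product of two path complexes, and it is the one place where the precise parameter shifts of $X'$ and $Y'$ are forced. To keep the bookkeeping manageable I would try to reduce a general valid swap to a composition of \emph{elementary} swaps, each moving only the single topmost segment of one cap (so that $S_X$ or $S_Y$ is a lone $1$-cell at each stage), and induct on $(m-a)+(n-b)$; here one must additionally verify that the intermediate weakly monotone roots produced at each stage remain weakly monotone, which again follows from~\eqref{eqn:4}. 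If that reduction goes through, the folding at each step involves only a bounded number of cells and the verifications become essentially the same as in Example~\ref{ex:3.4}.
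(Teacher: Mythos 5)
Your high-level plan is sound: split the product grid into blocks, find $\gr$-preserving cellwise identifications where possible, and apply Lemma~\ref{lem:3.3} with the lifting condition reducing to the swap-validity inequalities~(\ref{eqn:4}). Your observations that the $\mathrm{base}\times\mathrm{base}$ block maps by the identity and that the cap$\times$cap block maps by swapping tensor factors (a ninety-degree rotation, in the paper's phrasing) are correct, and you rightly identify the mixed blocks as the crux. However, the construction you propose there has a genuine defect. Your ``mixed block'' lumps together two families of cells with very different fates. What the paper calls the \emph{side panels} (products with one factor a base cell of absolute index $\le a$ that is not the central angle $\alpha_a$, and the other factor in the cap) are indeed crushed inward toward the corners. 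But the \emph{bridge cells} --- products of the central angle $\alpha_a$ (resp.\ $\beta_b$) with a cap cell in the other factor --- are \emph{not} collapsed: they must be carried over, by the rotation, to bridge cells in the \emph{opposite} mixed block of $X'\otimes Y'$. For instance, with $m=n=2$ and $a=b=1$, the bridge cell $\alpha_1\times w_2 \in \mathrm{base}(X)\times S_Y$ has $\gr = s_1+q_2$, and the unique cell of $X'\otimes Y'$ with matching grading is $x_2'\times\beta_1' \in (S_Y+\Delta)\times\mathrm{base}(Y')$. A map that folds the mixed block ``downward onto the adjacent base blocks'' cannot produce this cross-over, so the map as you describe it would not be a local equivalence.

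The paper's resolution is to introduce an auxiliary geometric complex $A$ (resp.\ $B$) obtained from $X\otimes Y$ (resp.\ $X'\otimes Y'$) by crushing only the side panels while retaining the corner, inner-box, and bridge cells, and to write down explicit local equivalences $X\otimes Y\simeq A$ and $X'\otimes Y'\simeq B$; the lifting conditions for these collapse maps are exactly where weak monotonicity of all four roots and the inequalities~(\ref{eqn:4}) enter. The structural payoff is that $A$ and $B$ are then \emph{isomorphic} as geometric complexes, by a rigid ninety-degree rotation of the inner and bridge portions with the outer frame held fixed. The direct map $X\otimes Y\to X'\otimes Y'$ you seek is a composite of these three maps, and its value on a mixed-block cell is a sum of several cells (the rotated bridge cell plus a chain along the boundary), not a simple collapse --- guessing it outright is considerably harder than Example~\ref{ex:3.4} suggests. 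Finally, your proposed induction on elementary swaps is left unverified at the one point where it matters: the claim that the intermediate roots remain weakly monotone does not obviously follow from~(\ref{eqn:4}), which constrains only the endpoints. The paper sidesteps this entirely by padding $X$ (or $Y$) with redundant parameters $(s_m,s_m)$ to arrange $m-a=n-b$, after which the single general case handles everything.
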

\begin{proof}
Let $X$ and $Y$ be two weakly monotone roots as above, and assume that $X'$ and $Y'$ are a valid swapped pair. In order to communicate the idea of the proof, we will first establish the claim under the assumption that $m = n$ and $a = b = 1$. The general case will follow easily from this simpler one with some minor modifications.

For ease of notation, let the leaf and angle generators of $X$ be denoted by $v_i$ and $\alpha_i$, respectively, and let the leaf and angle generators of $Y$ be denoted by $w_i$ and $\beta_i$. Likewise, let the generators of $X'$ be $v_i'$ and $\alpha_i'$, and let the generators of $Y'$ be $w_i'$ and $\beta_i'$. In case we need to refer to a generator of $X$ or $X'$ without specifying whether it is a leaf or an angle generator, we will use the notation $x_i$ or $x_i'$ (and similarly $y_i$ or $y_i'$ for $Y$ or $Y'$). Note that there is an obvious $\gr$-preserving identification between the unprimed and primed generators of index $|i| = 1$. For generators of index $|i| > 1$, we think of $x_i'$ as corresponding to $y_i$, except with shifted grading 
\[
\gr(x_i') = \gr(y_i) + \Delta. 
\]
Likewise, $y_i'$ may be identified with $x_i$, except that\footnote{Here, we use the term ``identification" rather loosely, as we have not (yet) made any claim about constructing maps between the unprimed and primed complexes. As the leaf and angle generators of various complexes are just 0- and 1-cells, we obviously have a tautological ``identification" between any two generators of the same dimension, although this may not preserve $\gr$. However, given the geometric nature of the swapping operation, it is clear that certain generators in the unprimed and primed complexes should be naturally thought of as related to each other.}  
\[
\gr(y_i') = \gr(x_i) - \Delta.
\]

As in Example~\ref{ex:3.4}, we proceed by analyzing the skeleta of $X \otimes Y$ and $X' \otimes Y'$. Because $m = n$, these two skeleta are identical to each other, and are in fact just $2m \times 2m$ squares. Hence the subtlety will come from understanding the grading function $\gr$ defined on each. We have displayed the skeleta of $X \otimes Y$ and $X' \otimes Y'$ in Figure~\ref{fig:10}, together with some notational colorings, which we now explain. 

\begin{figure}[h!]
\center
\includegraphics[scale=0.8]{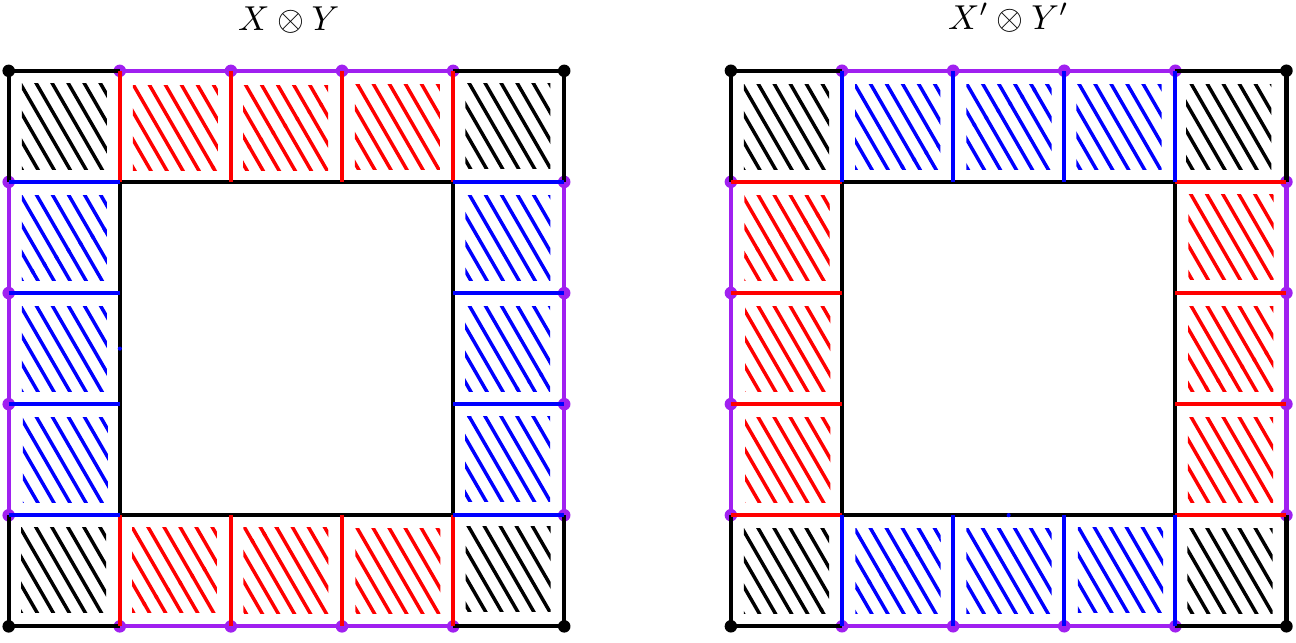}
\caption{Skeleta of $X \otimes Y$ and $X' \otimes Y'$. On the left, the inner box represents cells of the form $x_i \times y_j \text{ with } |i|, |j| > 1$; similarly for cells $x_i' 
\times y_j'$ on the right.} \label{fig:10}
\end{figure}

Consider the skeleton of $X \otimes Y$, displayed on the left in Figure~\ref{fig:10}. We divide this into several parts, as follows. First, instead of explicitly drawing the 0-, 1-, and 2-cells appearing in the center of the complex, we have drawn a $2(m-1) \times 2(m-1)$ square box enveloping them. We refer to the collection of cells appearing inside this square as the \textit{inner box}. These are the cells coming from products of leaf generators and angle generators internal to $S_X$ and $S_Y$; that is, cells of the form 
\[
x_i \times y_j, \text{with } |i|, |j| > 1. 
\]

Also appearing in the picture are the four corners of the overall square, together with the eight 1-cells and four 2-cells incident to them. These are cells of the form 
\[
x_i \times y_j, \text{with } |i| = |j| = 1. 
\]
We refer to the collection of these cells as the \textit{corner cells} and have colored/shaded them black in Figure~\ref{fig:10}. Next, there are the 0- and 1-cells which occur along the faces of the overall square, but are not corner cells. These are of the form
\begin{align*}
&v_i \times y_j, \text{with } |i| = 1 \text{ and } |j| > 1, \text{or}\\
&x_i \times w_j, \text{with } |i| > 1 \text{ and } |j| = 1.
\end{align*}
We call these cells the $\textit{side panels}$ and color them purple. Finally, there are also the 1- and 2-cells running between the inner box and the side panels. These are either of the form 
\begin{align*}
&\alpha_i \times y_j, \text{with } |i| = 1 \text{ and } |j| > 1, \text{or}\\
&x_i \times \beta_j, \text{with } |i| > 1 \text{ and } |j| = 1. 
\end{align*}
We refer to these as the \textit{bridge cells}. In Figure~\ref{fig:10}, we have colored/shaded the bridge cells of the first kind blue and the second kind red. We define a similar decomposition of $X' \otimes Y'$, displayed on the right in Figure~\ref{fig:10}, except that we switch the coloring of the red and blue cells. (The reason for this change will become clear presently.)

Now let us compare the tensor products $X \otimes Y$ and $X' \otimes Y'$. First, observe that because of the natural identification of unprimed and primed generators when $|i| = 1$, the obvious bijection between the corner cells of $X \otimes Y$ and the corner cells of $X' \otimes Y'$ preserves the grading function $\gr$. Explicitly, one can check that in both cases, the following gradings hold: all four 0-cells have $\gr = p_1 + q_1$, the four horizontal 1-cells have $\gr = s_1 + q_1$, the four vertical 1-cells have $\gr = p_1 + t_1$, and the four 2-cells have $\gr = s_1 + t_1$.

Next, consider the inner boxes of the two complexes. For $|i|, |j| > 1$, we have
\[
\gr(x_i' \times y_j') = (\gr(y_i) + \Delta) + (\gr(x_j) - \Delta) = \gr(y_i \times x_j).
\]
Hence the correspondence between the two inner boxes defined by sending $x_i' \times y_j'$ to $y_i \times x_j$ for $|i|, |j| > 1$ preserves $\gr$. We may think of this geometrically as identifying the inner boxes via reflection across the diagonal. Note that since each complex is symmetric about the vertical and horizontal axes, we can also effect a $\gr$-preserving correspondence by using a ninety-degree rotation in either the clockwise or counter-clockwise direction.

Finally, consider the bridge cells. For $X \otimes Y$, we have already described the possible forms that these can take. In $X' \otimes Y'$, the bridge cells are of the form
\begin{align*}
&\alpha_i' \times y_j', \text{with } |i| = 1 \text{ and } |j| > 1, \text{or}\\
&x_i' \times \beta_j', \text{with } |i| > 1 \text{ and } |j| = 1. 
\end{align*}
Gradings of cells of the first kind are given by 
\[
s_1 + \gr(y_j') = s_1 + \gr(x_j) - \Delta = \gr(x_j) + t_1
\] 
for $|j| > 1$. These are precisely the same as the gradings of bridge cells of the second kind in $X \otimes Y$. Similarly, gradings of cells of the second kind in $X' \otimes Y'$ are given by
\[
\gr(x_i') + t_1 = \gr(y_i) + \Delta + t_1 = s_1 + \gr(y_i),
\]
for $|i| > 1$, which are the same as the gradings of bridge cells of the first kind in $X \otimes Y$. Hence we see that, as in the inner-box case, we again have a $\gr$-preserving correspondence between the two sets of bridge cells, given by a ninety-degree rotation in either direction. To represent this identification, we have interchanged the red and blue colorings in $X' \otimes Y'$, so that the obvious bijection between the red (respectively, blue) cells in $X \otimes Y$ and the red (respectively, blue) cells in $X' \otimes Y'$ given by either rotation preserves $\gr$.

These observations give some intuition for how to define a map between the skeleton of $X \otimes Y$ and the skeleton of $X' \otimes Y'$. Roughly speaking, we map the corner cells to each other via the obvious identity bijection, while using a ninety-degree rotation on the inner box and bridge cells. There are several evident difficulties with this suggestion. First, it is unclear how to extend this to an actual cellular map from one skeleton to the other while still preserving the cellular differential. Second, we have not specified how to define our map on the side panels. Indeed, one can check that the gradings of the cells in the side panels do \textit{not} work out to produce any such similar correspondence, either by an identity bijection or a ninety-degree rotation. We thus proceed instead by first constructing two auxiliary complexes which are locally equivalent to $X \otimes Y$ and $X' \otimes Y'$. As we will see, these auxiliary complexes will turn out to be more amenable to the intuition we have built up over the last several paragraphs.

We begin by defining a geometric complex $A$ which is locally equivalent to $X \otimes Y$. The skeleton of $A$ is displayed on the right in Figure~\ref{fig:11}. Like $X \otimes Y$, the skeleton of $A$ has a $2(m-1) \times 2(m-1)$ inner box, which we define to be exactly same as that of $X \otimes Y$. More precisely, we let the cells in the inner box of $A$ be duplicates of those in $X \otimes Y$, and we set $\gr$ on the inner box of $A$ to be the same as it is on $X \otimes Y$. Next, the skeleton of $A$ contains a collection of red and blue 1- and 2-cells. These are in obvious bijection with the red and blue bridge cells in $X \otimes Y$, as indicated by the correspondence in Figure~\ref{fig:11}. We define $\gr$ on these cells as being equal to $\gr$ of their counterparts in $X \otimes Y$. Finally, $A$ contains a total of twelve black 0-, 1-, and 2-cells. We define $\gr$ on these cells as follows: the four 0-cells have $\gr = p_1 + q_1$, the two horizontal 1-cells have $\gr = s_1 + q_1$, the two vertical 1-cells have $\gr = p_1 + t_1$, and the four 2-cells have $\gr = s_1 + t_1$. These gradings should be compared with the gradings of the corner cells in $X \otimes Y$. Note that $A$ has an obvious involution given by reflection in each coordinate. 
\begin{figure}[h!]
\center
\includegraphics[scale=0.8]{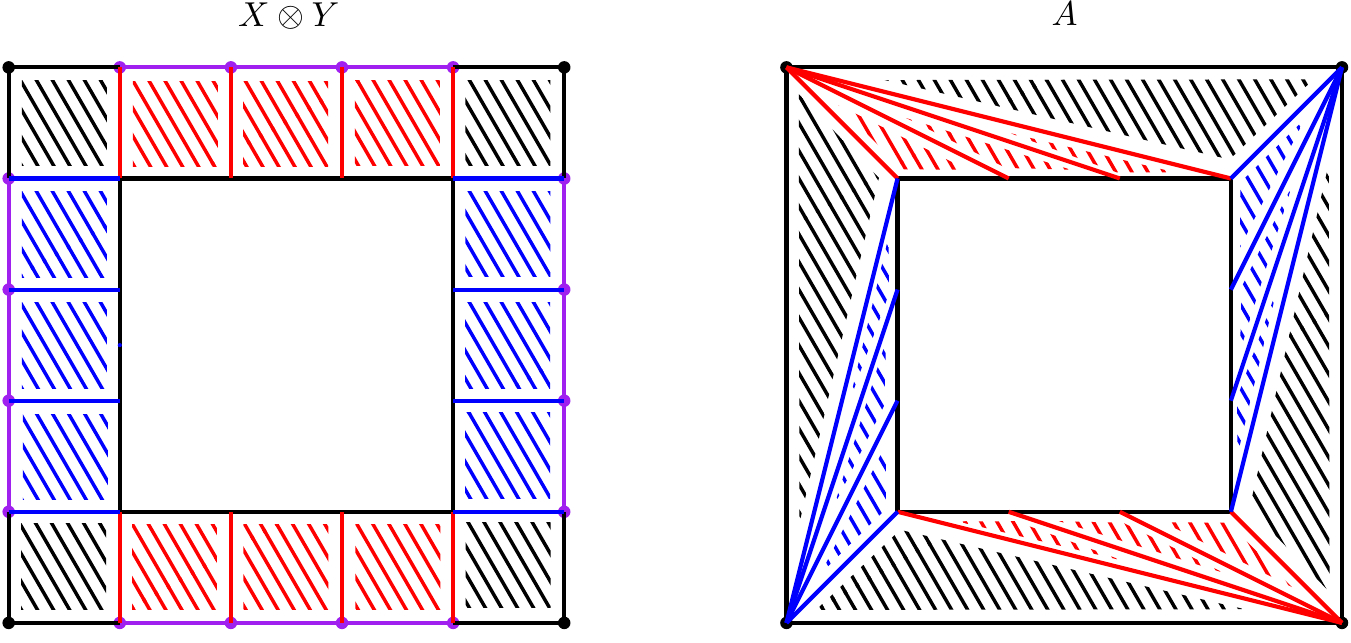}
\caption{Auxiliary complex $A$ (right), locally equivalent to $X \otimes Y$ (left).} \label{fig:11}
\end{figure}

This defines a geometric complex $A$ which we claim is locally equivalent to $X \otimes Y$. We begin by constructing a local map from $X \otimes Y$ to $A$ by defining a map $f$ between their skeleta. We let $f$ be given by the identity on the inner box, and define $f$ on the red and blue cells via the bijection indicated in Figure~\ref{fig:12}. Next, we send the four corners of $X \otimes Y$ to the four corners of $A$, and the four black 2-cells of $X \otimes Y$ to the four black 2-cells of $A$ via the correspondence shown in Figure~\ref{fig:12}. It remains to define $f$ on the black 1-cells and the side panels. 
\begin{figure}[h!]
\center
\includegraphics[scale=0.8]{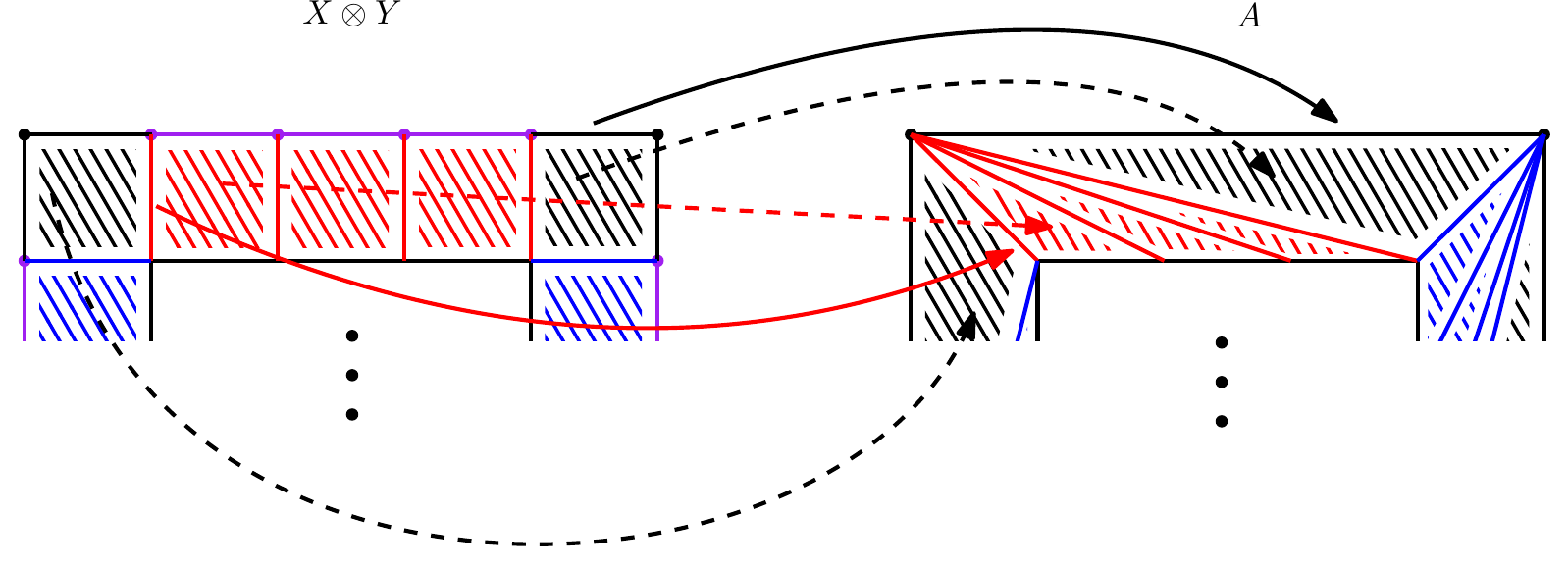}
\caption{Schematic depiction of a map from $X \otimes Y$ to $A$. Solid arrows denote maps of 1-cells and dashed arrows denote maps of 2-cells. We have only indicated the map on a single red 1-cell and 2-cell; the map on the other red cells is defined similarly. Extension to the rest of $X \otimes Y$ (not displayed in the figure) is achieved by repeatedly rotating the picture by ninety degrees.} \label{fig:12}
\end{figure}

Consider (for example) the uppermost face of $X \otimes Y$. We send all of the purple 0-cells contained in this face to the nearest counter-clockwise corner 0-cell, which in this case is the top-left corner of $A$. We define $f$ to be zero on all of the 1-cells contained in this face, except for the right-hand black 1-cell, which we map to the uppermost face in $A$. One should think of $f$ restricted to the top face of $X \otimes Y$ as a map from the interval to itself, fixing the endpoints, where everything to the left of the right-hand black 1-cell is crushed to the left endpoint and the remaining 1-cell is ``stretched out" over the full interval. We define $f$ similarly in a ``counter-clockwise" manner on the other three sides of $X \otimes Y$. It is straightforward to check that this map preserves the cellular differential.

Note that $f$ actually maps each cell of $X \otimes Y$ either to a single cell of $A$ or to zero. Because of the way $\gr$ is defined on $A$, in the former case $f$ actually preserves $\gr$ everywhere except for possibly on the purple 0-cells. Now, the purple 0-cells contained in the top face of $X \otimes Y$ have gradings 
\[
\gr = p_i + q_1, \text{with } 2 \leq i \leq m. 
\]
Due to the monotonicity of $X$, these are all less than or equal to $p_1 + q_1$, which is the grading of the top-left corner of $A$. A similar argument for the other purple 0-cells using the monotonicity of $Y$ shows that $f$ satisfies the lifting condition of Lemma~\ref{lem:3.3}. Since $f$ is evidently $J_0$-equivariant and maps each 0-cell of $X \otimes Y$ to exactly one 0-cell of $A$, applying Lemma~\ref{lem:3.3} yields the desired local map.

We now construct a local map from $A$ to $X \otimes Y$ by defining a map $g$ from the skeleton of $A$ to the skeleton of $X \otimes Y$. We set $g$ to be the inverse of $f$ on the inner box, as well as all black, red, and blue 2-cells. We define $g$ on the corner 0-cells via the obvious identity correspondence, and define $g$ on the black 1-cells by sending each face of $A$ to the sum of 1-cells constituting the corresponding face in $X \otimes Y$. Finally, we define $g$ on the red and blue 1-cells of $A$ according to the prescription of Figure~\ref{fig:13}. More precisely, we send a red or blue 1-cell in $A$ to the corresponding red or blue 1-cell in $X \otimes Y$, summed together with the chain of purple and black 1-cells running from that 1-cell to the nearest counter-clockwise corner 0-cell. With some care, one can check that $g$ preserves the cellular differential (over $\mathbb{Z}/2\mathbb{Z}$).
\begin{figure}[h!]
\center
\includegraphics[scale=1]{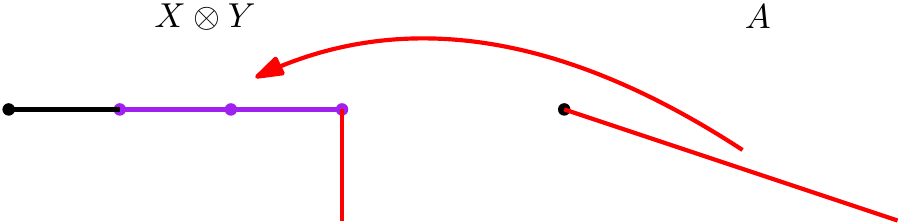}
\caption{Slanted red 1-cell in $A$ mapped to the sum of 1-cells in $X \otimes Y$ displayed on the left.} \label{fig:13}
\end{figure}

We now check that $g$ satisfies the lifting condition of Lemma~\ref{lem:3.3}. Note that, except in the case of the black 1-cells and bridge 1-cells of $A$, $g$ maps each cell of $A$ to a single cell in $X \otimes Y$ of the same grading. We thus consider the black 1-cells first. As described, $g$ takes the uppermost face of $A$ to the sum of 1-cells constituting the top face of $X \otimes Y$. The gradings of these latter 1-cells are given by
\[
\gr = s_i + q_1, \text{with }1 \leq i \leq m. 
\]
Due to the monotonicity of $X$, these are all greater than or equal to $s_1 + q_1$, which is the grading of the top face of $A$. A similar computation using the monotonicity of $Y$ holds for the other three sides, verifying the lifting condition for the black 1-cells.

Now consider a red 1-cell in $A$, as in Figure~\ref{fig:13}. This has grading $\gr = p_i + t_1$ for some fixed $|i| > 1$. Note that this is less than or equal to $p_2 + t_1$, by the monotonicity of $X$. The corresponding red 1-cell in the image of $g$ has exactly the same grading as the original, while the remaining 1-cells in the image of $g$ all lie in the uppermost face of $X \otimes Y$, and have gradings bounded below by $s_1 + q_1$ (by the previous paragraph). Thus, in this instance, to verify the monotonicity condition it suffices to establish the inequality
\[
p_2 + t_1 \leq s_1 + q_1.
\]
However, rearranging this yields the inequality $q_1 \geq p_2 - \Delta$, which is precisely the condition that $Y'$ is weakly monotone, as in (\ref{eqn:4}). A similar argument for the blue cells using the fact that $X'$ is weakly monotone proves that $g$ satisfies the lifting condition for all cells. Applying Lemma~\ref{lem:3.3} then yields the desired local equivalence.

We have thus constructed a somewhat simpler geometric complex $A$ which is locally equivalent to $X \otimes Y$. Note that (roughly speaking) $A$ consists entirely of the corner cells, inner box, and bridge cells of $X \otimes Y$, with the side panels having ``disappeared". We now construct a similar geometric complex $B$ which is locally equivalent to $X' \otimes Y'$, displayed on the right in Figure~\ref{fig:14}. This is defined in exactly the same way as $A$, except with one important modification. Whereas before we mapped the purple 0-cells in $X \otimes Y$ to the nearest \textit{counter-clockwise} corner 0-cell, we now map the purple 0-cells in $X' \otimes Y'$ to the nearest \textit{clockwise} corner 0-cell. (We likewise reverse the chirality of the various other maps needed to define the local equivalence.) This results in the complex $B$ shown in Figure~\ref{fig:14}. Again, the fact that $B$ is locally equivalent to $X' \otimes Y'$ follows from the fact that $X$, $Y$, $X'$, and $Y'$ are all weakly monotone.
\begin{figure}[h!]
\center
\includegraphics[scale=0.8]{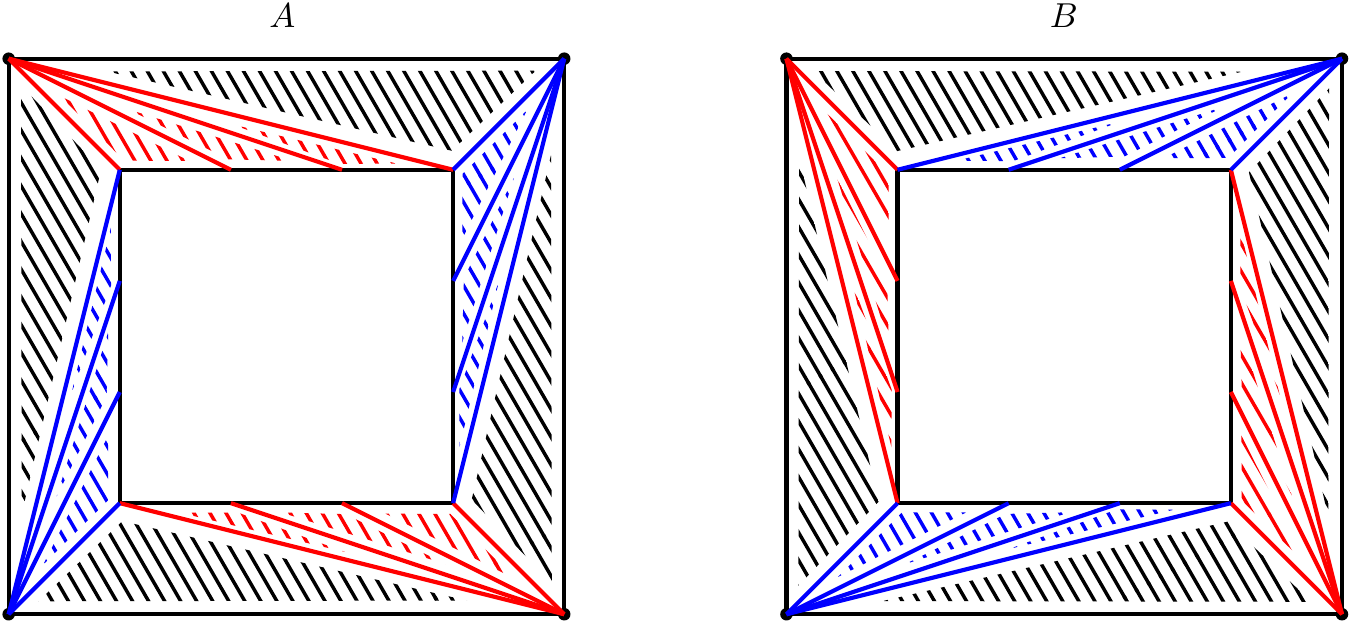}
\caption{Geometric complexes $A$ and $B$, locally equivalent to $X \otimes Y$ and $X' \otimes Y'$. Compare with Figure~\ref{fig:10}.} \label{fig:14}
\end{figure}

We now observe, however, that there is actually a $\gr$-preserving isomorphism between $A$ and $B$. This is most easily described geometrically, as follows. We view the black 0- and 1-cells as constituting a fixed outer frame, and we think of the black 2-cells together with the red and blue cells as being malleable strings/rubber sheets connecting the frame to the inner box. To map $A$ onto $B$, we rotate the inner box by ninety degrees counter-clockwise, while keeping the outer frame fixed. This ``drags" the remaining cells of $A$ into bijection with the cells of $B$. Moreover, because of our discussion at the beginning of the proof, this correspondence preserves $\gr$. Similarly, to map $B$ onto $A$, we rotate the inner box clockwise by ninety degrees. This shows that $A$ and $B$ are actually the same complex, and thus proves the theorem in the case that $m = n$ and $a = b = 1$.

We now relax the conditions on $m, n, a,$ and $b$ by allowing these to vary. For the moment, we will still assume that $m - a = n - b$.  As before, we again divide $X \otimes Y$ into four types of cells. These have been color-coded and displayed on the left in Figure~\ref{fig:15}. First, we consider cells of the form
\[
x_i \times y_j, \text{with } |i| > a \text{ and } |j| > b. 
\]
These constitute a $2(m-a) \times 2(m-a)$ box of cells which we again think of as being the inner box. Next, we have cells of the form
\[
x_i \times y_j, \text{with } |i| \leq a \text{ and } |j| \leq b,
\]
which generalize the corner cells from before. In Figure~\ref{fig:15}, we have avoided drawing most of the corner cells, and have instead covered the majority of them with four black corner wedges. Roughly speaking, these correspond to the corner 0-cells of the previous case. We also have appropriate generalizations of the side panels and the bridge cells. Cells in the side panels are now of the form
\begin{align*}
&x_i \times y_j, \text{with } |i| < a \text{ and } |j| > b, \text{ or}\\
&v_i \times y_j, \text{with } |i| = a \text{ and } |j| > b, \text{ or}\\
&x_i \times y_j, \text{with } |i| > a \text{ and } |j| < b, \text{ or}\\
&x_i \times w_j, \text{with } |i| > a \text{ and } |j| = b.
\end{align*}
Meanwhile, bridge cells are of the form
\begin{align*}
&\alpha_i \times y_j, \text{with } |i| = a \text{ and } |j| > b, \text{ or}\\
&x_i \times \beta_j, \text{with } |i| > a \text{ and } |j| = b. 
\end{align*}
As before, it is easily checked that the corner cells of $X \otimes Y$ and $X' \otimes Y'$ are identical, while the inner boxes and bridge cells are related by a ninety-degree rotation.
\begin{figure}[h!]
\center
\includegraphics[scale=0.8]{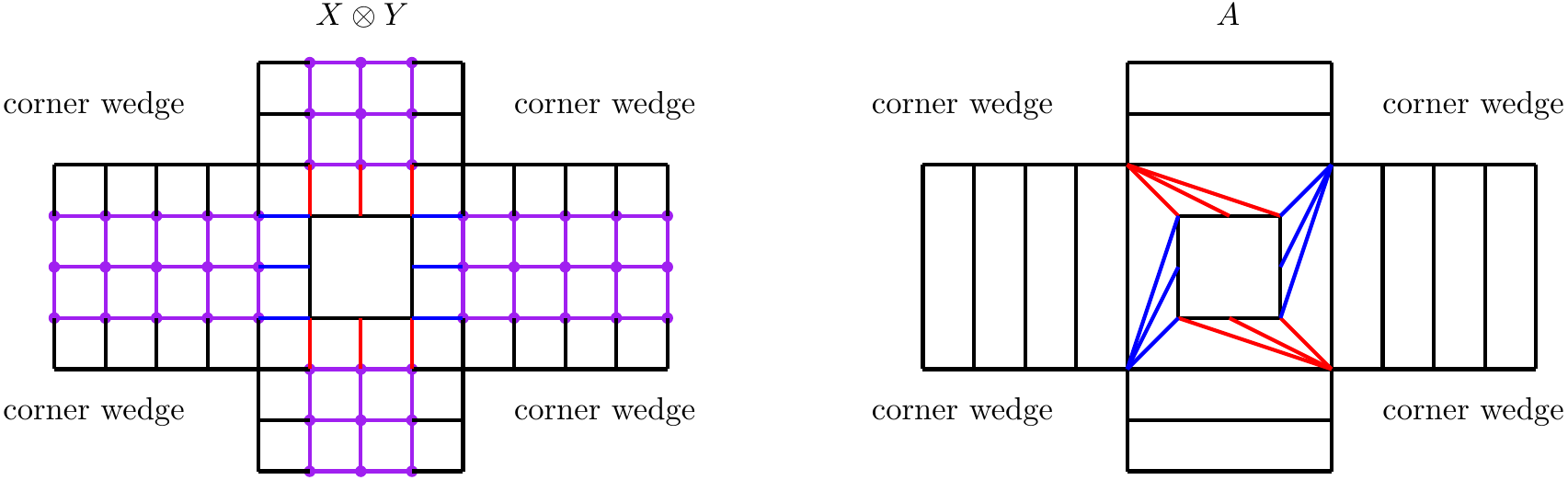}
\caption{The case where $m - a = n - b$. Auxiliary complex $A$ (right), locally equivalent to $X \otimes Y$ (left). We have avoided shading the 2-cells in order to prevent clutter in the diagram.} \label{fig:15}
\end{figure}

We will not re-formulate the entire argument here in the more general case. The idea is, of course, to construct an auxiliary complex $A$ which (roughly speaking) retains the corner cells, inner box, and bridge cells of $X \otimes Y$, while throwing out the side panels. Such a complex is displayed on the right in Figure~\ref{fig:15}. The map from $X \otimes Y$ to $A$ should again be thought of as taking (for example) the uppermost group of side panels and crushing them directly left towards the top-left corner wedge. In contrast to the previous case, there are now some purple 1-cells on which our map is non-zero, as well as some new black and purple 2-cells that must be considered. However, the desired maps on these cells are defined using the same intuition as before. We leave it to the reader to complete the generalization of the proof.

Finally, we consider the case when $m - a \neq n - b$. This initially seems more difficult, since if $m - a \neq n - b$, then the obvious inner box is no longer square, and hence cannot exhibit the desired rotational symmetry. However, if (without loss of generality) $m - a < n - b$, then as discussed in the beginning of the section we may pad the complex of $X$ with redundant parameters, in order to create an equivalent complex with a larger number of generators. More precisely, we append any number of copies of $(s_m, s_m)$ to the end of our parameter list to obtain a graded root
\[
\widetilde{X} = M(p_1, s_1; \ldots; p_m, s_m; s_m, s_m; \ldots; s_m, s_m).
\]
which is isomorphic to $X$. Replacing our original complex with this ``fattened" one does not alter the homotopy class or change the parameters $a$ and $b$ of the swapping operation. Hence we can always reduce to the case when $m - a = n - b$. This completes the proof. 
\end{proof}


\section{The Structure of $\Theta_{AR}$}
\label{sec:4}
In this section, we use the local equivalence of Theorem~\ref{thm:3.5} to prove Theorems~\ref{thm:1.1} and \ref{thm:1.2}. These will follow from a structural result expressing any monotone root as a linear combination of type-one roots and their duals. In order to establish this latter theorem, we will need the following special case of Theorem~\ref{thm:3.1}. For completeness, we give a sketch of the proof here in the language of geometric complexes:

\begin{lemma}\label{lem:4.1}
Let $M = M(h_1, r_1; \ldots; h_n, r_n)$ be a weakly monotone root. If $h_i = h_{i+1}$ for some $1 \leq i < n$, then $M$ is locally equivalent to the weakly monotone root
\[
\widetilde{M} = M(h_1, r_1; \ldots; h_{i-1}, r_{i-1}; h_{i+1}, r_{i+1}; \ldots; h_n, r_n)
\]
obtained by deleting the parameters $(h_i, r_i)$.
\end{lemma}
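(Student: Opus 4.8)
The plan is to deduce this directly from Lemma~\ref{lem:3.3}, working with the geometric realizations of $M$ and $\widetilde M$, whose skeleta are intervals subdivided into $2n-1$ and $2n-3$ edges respectively. I would exhibit two local maps: a ``collapse'' $f\co M\to\widetilde M$ and an ``inclusion'' $g\co\widetilde M\to M$, and check the hypotheses of Lemma~\ref{lem:3.3} in each direction. To set notation, write $v_1,\dots,v_{2n}$ and $\alpha_1,\dots,\alpha_{2n-1}$ for the leaf and angle generators of $M$, so that $J_0v_j=v_{2n+1-j}$, $J_0\alpha_j=\alpha_{2n-j}$, and $\gr(v_j)=h_j$, $\gr(\alpha_j)=r_j$ for $j\le n$. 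Since $i<n$, the edge $\alpha_i$ and its mirror $\alpha_{2n-i}$ are distinct and are not the central angle $\alpha_n$. The hypothesis $h_i=h_{i+1}$ together with weak monotonicity of $M$ (in particular $r_{i-1}\le r_i\le r_{i+1}$) shows immediately that $\widetilde M$ is again weakly monotone: its leaf parameters satisfy $h_1\ge\cdots\ge h_{i-1}\ge h_{i+1}\ge\cdots\ge h_n$ and its angle parameters $r_1\le\cdots\le r_{i-1}\le r_{i+1}\le\cdots\le r_n$, with $h_n\ge r_n$ unchanged. Let $\sigma\co\{1,\dots,2n-2\}\hookrightarrow\{1,\dots,2n\}$ be the order-preserving injection whose image is the complement of $\{i,\,2n+1-i\}$, and label the generators of $\widetilde M$ so that its $j$-th leaf $v_j'$ ``corresponds to'' $v_{\sigma(j)}$.

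For $f$ I would take the cellular map of skeleta that collapses the two edges $\alpha_i$ and $\alpha_{2n-i}=J_0\alpha_i$: set $f(\alpha_i)=f(\alpha_{2n-i})=0$, let $f(v_i)=f(v_{i+1})$ be the surviving leaf of $\widetilde M$ of grading $h_{i+1}$ (and symmetrically for $v_{2n-i},v_{2n+1-i}$), and send every other generator to its counterpart in $\widetilde M$. This is a cellular chain map since $f(\partial\alpha_i)=f(v_i+v_{i+1})=0$; it is manifestly $J_0$-equivariant, and it sends each $0$-cell to a single $0$-cell. Because $h_i=h_{i+1}$, the only generators where the image grading could differ from the source grading are $v_i,v_{i+1}$ and their mirrors, and there equality holds; so $f$ satisfies the lifting condition, and Lemma~\ref{lem:3.3} promotes it to a local map $M\to\widetilde M$.

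For $g$ I would send $v_j'\mapsto v_{\sigma(j)}$ and send each edge $\alpha_j'$ of $\widetilde M$ to the chain of edges in $M$ forming the path from $v_{\sigma(j)}$ to $v_{\sigma(j+1)}$. Since $\sigma$ increases by $1$ or $2$, this path is a single edge except for the one or two ``gap'' edges straddling the deleted positions; near position $i$ (when $i\ge2$) the source edge has grading $r_{i-1}$ and is sent to $\alpha_{i-1}+\alpha_i$, of gradings $r_{i-1}$ and $r_i$, and similarly at the mirror gap. The boundary of this doubled path is $v_{\sigma(j)}+v_{\sigma(j+1)}$, so $g$ is a chain map; it is $J_0$-equivariant because $\{i,\,2n+1-i\}$ is symmetric; and it sends each $0$-cell to a single $0$-cell. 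The only substantive point is the lifting condition for the gap edges, which reduces precisely to the inequality $r_{i-1}\le r_i$ — that is, to the weak monotonicity of $M$; everywhere else $g$ preserves $\gr$. (When $i=1$ one has $\sigma(j)=j+1$ for all $j$, there is no gap edge, and $g$ preserves $\gr$ outright.) Lemma~\ref{lem:3.3} then yields a local map $\widetilde M\to M$, and together with $f$ this shows $M$ and $\widetilde M$ are locally equivalent.

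I do not expect a genuine obstacle here: as the lemma is essentially one step of the monotone-subroot extraction of Theorem~\ref{thm:3.1}, the content is concentrated in the lifting estimate for $g$, which is immediate from weak monotonicity, while the rest is bookkeeping with the relabeling $\sigma$. The remaining points to dispatch are purely formal: the cases $i=1$ and small $n$ are covered by the same formulas with the evident degenerations (e.g.\ $\widetilde M$ of type $n-1\ge1$, possibly projective), and one may freely replace the parameterized complex of a weakly monotone root by its $J_0$-equivariantly homotopy-equivalent standard complex, so no separate argument is needed to move between the two models.
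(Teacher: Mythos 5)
Your proposal is correct and follows essentially the same route as the paper: both apply Lemma~\ref{lem:3.3} to the skeleta, using a ``collapse'' map that crushes $\alpha_i$, $J_0\alpha_i$ and identifies $v_i$ with $v_{i+1}$ (and their mirrors), together with an ``inclusion'' sending the gap edge $\alpha'_{i-1}$ to $\alpha_{i-1}+\alpha_i$, with weak monotonicity ($r_{i-1}\le r_i$) supplying the lifting estimate. The paper's version is terse and figure-driven, whereas you spell out the reindexing $\sigma$ and the $i=1$ degeneration explicitly, but the maps and the verification are the same.
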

\begin{proof}
In Figure~\ref{fig:16}, we have displayed the two monotone roots in question, together with their associated skeleta. As before, we apply Lemma~\ref{lem:3.3}. To define the map from the left-hand to the right-hand side, we send the two 0-cells marked $\gr = h_i$ on the left to the two 0-cells marked $\gr = h_{i+1}$ on the right, and the two 1-cells with $\gr = r_i$ on the left to zero. We map all the other 0- and 1-cells via the obvious $\gr$-preserving bijection. The map in the other direction is given by sending each of the two 1-cells with $\gr = r_{i-1}$ on the right (if any) to the appropriate sum of 1-cells on the left with $\gr = r_{i-1}$ and $\gr = r_i$, and again using the obvious bijection on the remaining cells. See Figure~\ref{fig:16}.
\end{proof}
\begin{figure}[h!]
\center
\includegraphics[scale=0.8]{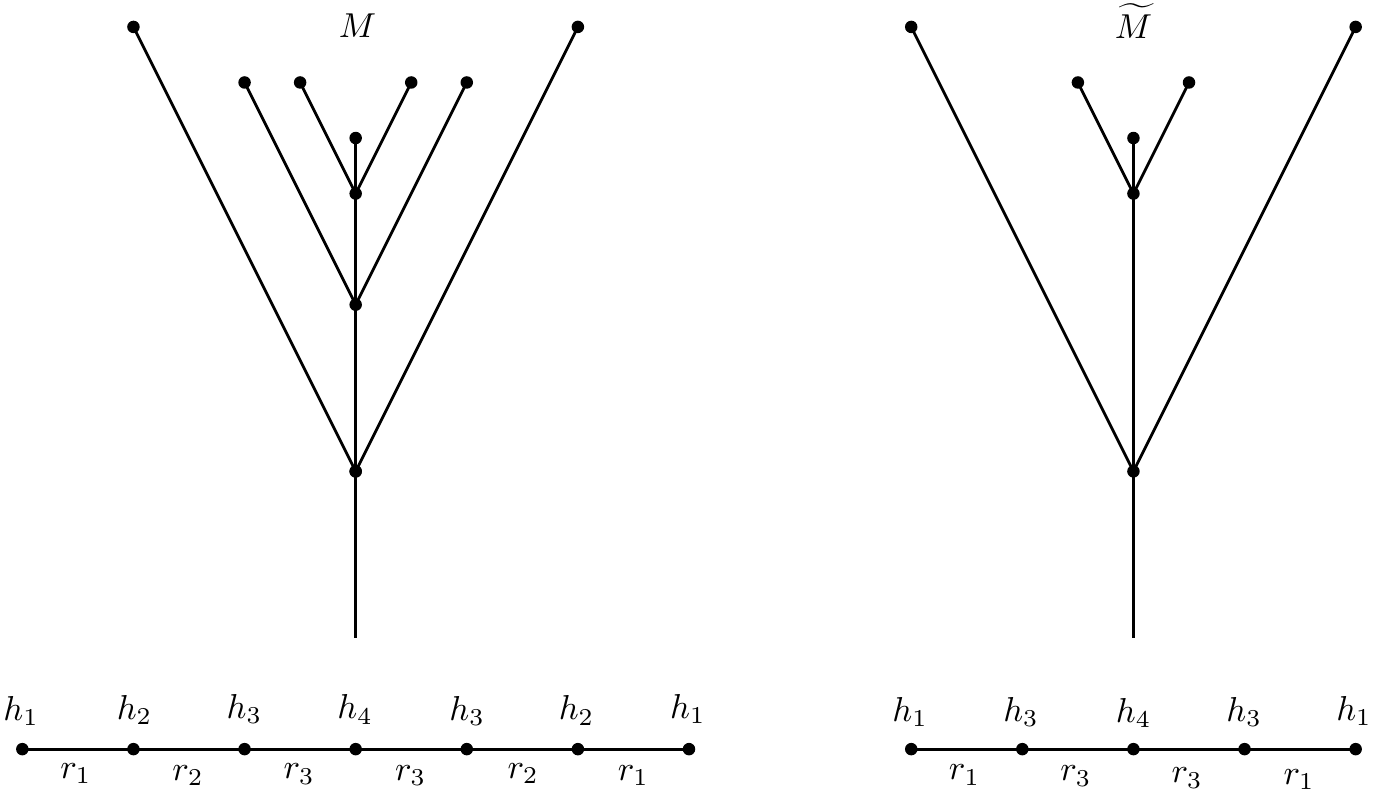}
\caption{Weakly monotone root (left) and its locally equivalent subroot (right). Gradings of cells are marked in the skeleta. Here $i = 2$.} \label{fig:16}
\end{figure}
With Lemma~\ref{lem:4.1} in hand, we now prove the central result of this section:

\begin{theorem}
\label{thm:4.2}
Let $M = M(h_1, r_1; \ldots; h_n, r_n)$ be a monotone root. Then we have the local equivalence
\[
M = \left(\sum_{i=1}^n M(h_i, r_i)\right)- \left(\sum_{i=1}^{n-1} M(h_{i+1}, r_i)\right).
\]
\end{theorem}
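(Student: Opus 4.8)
The plan is to prove Theorem~\ref{thm:4.2} by repeatedly applying the swap operation of Theorem~\ref{thm:3.5}, peeling off one type-one factor at a time. Starting from the right-hand side, I would write the claimed expression as a tensor product of type-one roots and duals of type-one roots, and then show that iterated swaps collapse this product down to the single monotone root $M$. Concretely, consider the tensor product
\[
P = M(h_1, r_1) \otimes M(h_2, r_2) \otimes \cdots \otimes M(h_n, r_n) \otimes M(h_2, r_1)^\vee \otimes \cdots \otimes M(h_n, r_{n-1})^\vee,
\]
where $M(a,b)^\vee$ denotes the dual projective root (so that $M(a,b)^\vee$ contributes $-M(a,b)$ in $\Inv_\Q$). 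Since local equivalence classes form a group under tensor product, it suffices to exhibit a local equivalence between $P$ and $M(h_1, r_1; \ldots; h_n, r_n)$.

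The key step is an induction on $n$. For $n = 1$ there is nothing to prove. For the inductive step, I would first combine $M(h_n, r_n)$ with $M(h_n, r_{n-1})^\vee$: this is exactly the situation of Example~\ref{ex:3.4} (with the roles of the parameters matched appropriately), which shows that $M(h_{n-1}, r_{n-1}; \cdot) \otimes (\text{type-one}) \otimes (\text{type-one})^\vee$ can be re-expressed. More precisely, the identity of Example~\ref{ex:3.4}, namely $M(0,x;y,y) + M(0,x-y) = M(0,x)$ rewritten as a statement about tensor products, is precisely the $n=2$ case of the theorem after a grading shift; the general inductive step says that appending the pair $M(h_n, r_n) \otimes M(h_n, r_{n-1})^\vee$ to $M(h_1, r_1; \ldots; h_{n-1}, r_{n-1})$ extends the parameter list by $(h_n, r_n)$. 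I would verify this extension by writing down an explicit geometric complex for $M(h_1, \ldots; h_{n-1}, r_{n-1}) \otimes M(h_n, r_n) \otimes M(h_n, r_{n-1})^\vee$ and using Lemma~\ref{lem:3.3} to build local maps in both directions, exactly as in Example~\ref{ex:3.4}: one crushes the "extra" leaf 0-cells coming from the dual factor onto the appropriate corner, sends the designated 1-cells to the new angle generator, and sends everything else to zero; the reverse map sends the new angle 1-cell to a sum of 1-cells tracing out the path to the nearest leaf. The monotonicity inequalities $h_1 > \cdots > h_n$ and $r_1 < \cdots < r_n$ guarantee the lifting condition is satisfied, and the $J_0$-equivariance and odd-0-cell conditions are immediate from the construction.

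Alternatively — and this may be cleaner to write up — one can derive the statement directly from Theorem~\ref{thm:3.5} by a telescoping argument: apply the swap operation with $a = b = 1$ to successively separate off $M(h_1, r_1)$ from the rest, reducing $M(h_1, \ldots, h_n, r_n)$ to $M(h_1, r_1) \otimes M(h_2 - \Delta, r_2 - \Delta; \ldots)$ with $\Delta = r_1 - r_1 = 0$ in the degenerate boundary case, or more usefully running the swap in reverse to merge a type-one root and a dual into a longer monotone root. The main obstacle I anticipate is bookkeeping: matching up the grading shifts so that the parameters $(h_{i+1}, r_i)$ appearing in the duals line up correctly with the swap operation's $\Delta = s_a - t_b$, and checking that at each stage the intermediate roots are still weakly monotone so that Theorem~\ref{thm:3.5} applies. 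I would handle this by being careful to always swap at $a = b = 1$ and to verify the two inequalities in~(\ref{eqn:4}) reduce to the strict monotonicity already assumed of $M$; the degenerate cases (where $h_i = h_{i+1}$ or $r_i = r_{i+1}$) are absorbed by Lemma~\ref{lem:4.1}, which lets one delete redundant parameters without changing the local equivalence class.
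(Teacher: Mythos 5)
Your proposal is essentially correct and identifies all the right tools: induction on $n$, the swap operation of Theorem~\ref{thm:3.5}, and Lemma~\ref{lem:4.1} to remove the redundant parameter that the swap leaves behind. The paper's actual proof is your ``alternative'' telescoping argument, peeling off $M(h_1, r_1)$ from the top: it first writes the trivial local equivalence $M = M + M(h_2, r_1) - M(h_2, r_1)$, applies Theorem~\ref{thm:3.5} to $X = M$, $Y = M(h_2, r_1)$ with $a = b = 1$ (so $\Delta = r_1 - r_1 = 0$ and $S_Y$ is empty), obtaining $X' = M(h_1, r_1)$ and $Y' = M(h_2, r_1; h_2, r_2; \ldots; h_n, r_n)$, and then deletes the repeated $h_2$ via Lemma~\ref{lem:4.1}. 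Your primary suggestion --- building the inductive step from the bottom by appending $M(h_n, r_n) \otimes M(h_n, r_{n-1})^\vee$ and checking the equivalence via an explicit geometric complex as in Example~\ref{ex:3.4} --- also works and is symmetric to the paper's argument (one can also see it as a swap at $a = n-1$, $b = 1$ after tensoring with $M(h_n, r_{n-1})$ and its dual).

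One point in your write-up needs tightening. You describe the telescoping step as ``apply the swap operation with $a = b = 1$ to successively separate off $M(h_1, r_1)$ from the rest, reducing $M(h_1, \ldots, h_n, r_n)$ to $M(h_1, r_1) \otimes M(h_2 - \Delta, r_2 - \Delta; \ldots)$.'' As stated, this isn't a legal move: Theorem~\ref{thm:3.5} takes a \emph{pair} of weakly monotone roots as input and exchanges subgraphs between them; it does not split a single root into a tensor product. The essential (and easy-to-miss) device is to first tensor $M$ with the identity element written as $M(h_2, r_1) \otimes M(h_2, r_1)^\vee$, and only then apply the swap to the pair $(M, M(h_2, r_1))$. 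You do gesture at this with ``running the swap in reverse to merge a type-one root and a dual,'' and your observation that $\Delta = r_1 - r_1 = 0$ signals you are implicitly choosing $Y = M(h_2, r_1)$, but the need to manufacture the auxiliary tensor factor before swapping should be made explicit, as it is the whole point of the choice $(h_2, r_1)$ for the cancelling pair.
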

\begin{proof}
We proceed by induction on $n$. If $n = 1$, then $M$ is already a type-one root, and there is nothing to prove. Otherwise, we have the local equivalence
\[
M = M + M(h_2, r_1) - M(h_2, r_1)
\]
trivially obtained by taking the tensor product of $M$ with $M(h_2, r_1)$ and its dual. We now apply Theorem~\ref{thm:3.5} to $X = M$ and $Y = M(h_2, r_1)$, with the swapping parameters $a = b = 1$. This yields the valid swapped pair 
\[
X' = M(h_1, r_1)
\]
and
\[
Y' = M(h_2, r_1; h_2, r_2; \ldots; h_n, r_n),
\]
as displayed in Figure~\ref{fig:17}. Note that in this case the subgraph $S_Y$ is empty. We thus have the chain of local equivalences
\begin{align*}
M &= M + M(h_2, r_1)- M(h_2, r_1) \\
&= M(h_1, r_1) + M(h_2, r_1; h_2, r_2; \ldots; h_n, r_n) - M(h_2, r_1) \\
&= M(h_1, r_1) + M(h_2, r_2; \ldots; h_n, r_n) - M(h_2, r_1),
\end{align*}
where in the last line we have applied Lemma~\ref{lem:4.1} to obtain a monotone root of type $n-1$. This establishes the inductive step and completes the proof. See Figure~\ref{fig:17}.
\end{proof}
\begin{figure}[h!]
\center
\includegraphics[scale=0.8]{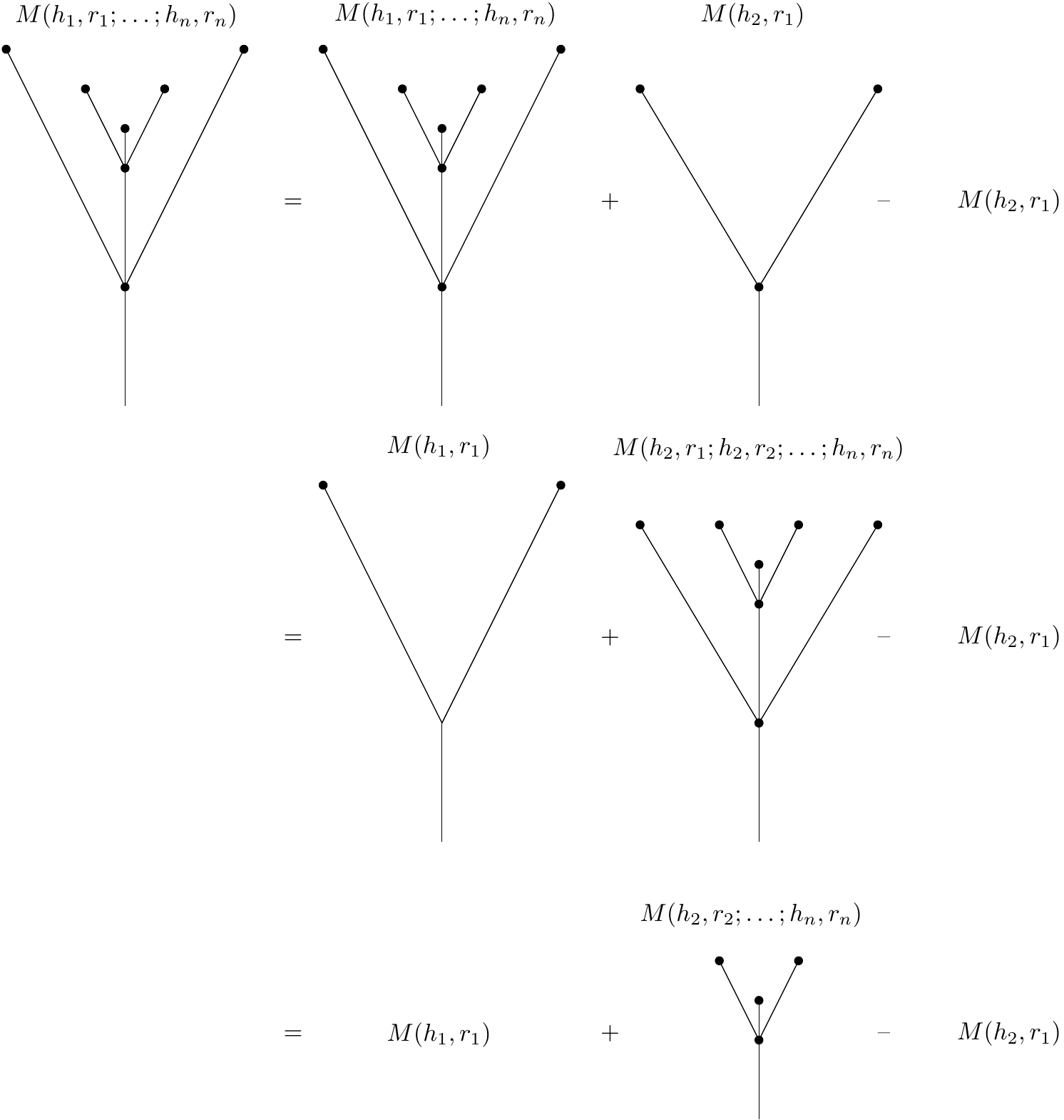}
\caption{Proof of Theorem~\ref{thm:4.2}.} \label{fig:17}
\end{figure}
\noindent
Note that in view of Theorems~\ref{thm:2.3} and \ref{thm:2.8}, this immediately implies that the $\inv$-complex of any connected sum of AR manifolds is locally equivalent to a linear combination of type-one roots.

Now let $\mathcal{M} = \{M_i\}_{i \geq 1}$ be a family of type-one roots such that
\[
\tilde{\delta}(M_i) = 2i
\]
for all $i \geq 1$. If this holds, then we call $\mathcal{M}$ a \textit{good family of roots}. If $\mathcal{M}$ is any good family of roots, then it is clear that every (nontrivial) type-one root is equal to an element of $\mathcal{M}$ up to a grading shift by some rational number. It follows that if $Y$ is any connected sum of AR manifolds, then $h(Y)$ is equal to a linear combination of type-one roots drawn from $\mathcal{M}$, together with an overall grading shift:
\begin{equation}\label{eqn:5}
h(Y) = \left(\sum_i c_i M_i\right)[\Delta].
\end{equation}
It is not hard to alter the proof of \cite[Theorem 1.7]{DM} to show that this decomposition is unique, as follows:

\begin{lemma}\cite[Theorem 1.7]{DM}\label{lem:4.3}
Let $\mathcal{M} = \{M_i\}$ be a good family of roots. If we have a local equivalence
\[
\left(\sum_i c_i M_i\right)[\Delta] = \left(\sum_i c_i' M_i\right)[\Delta'],
\]
then the linear combinations of the $M_i$ on either side are identical and $\Delta = \Delta'$.
\end{lemma}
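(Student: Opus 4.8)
The plan is to reduce the statement to a one-sided assertion and then detect the individual coefficients by means of an asymptotic invariant of iterated connected sums. Using the group structure on $\Inv_\Q$ (Definition~\ref{def:2.5}), written additively and with inverses given by duality, the hypothesis is equivalent to
\[
Z := \Bigl(\sum_i (c_i - c_i')\, M_i\Bigr)[\Delta - \Delta'] = 0 \qquad \text{in } \Inv_\Q.
\]
Setting $d_i = c_i - c_i'$ and $\delta = \Delta - \Delta'$, it therefore suffices to show that every $d_i$ vanishes and that $\delta = 0$. Observe that once we know all $d_i = 0$, the class $Z$ is represented by a single $\ff[U]$-tower with $\inv = \id$ placed in grading $\delta$; the existence of grading-preserving local maps in both directions between this and the trivial $\inv$-complex forces $\delta = 0$. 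So the heart of the matter is to prove that the $d_i$ all vanish.

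To that end I would introduce, for any $W \in \Inv_\Q$, the asymptotic quantities
\[
\Theta^-(W) = \lim_{k \to \infty}\bigl(\dl(W^{\otimes k}) - k\, d(W)\bigr), \qquad \Theta^+(W) = \lim_{k\to\infty}\bigl(\du(W^{\otimes k}) - k\, d(W)\bigr),
\]
where $d$ is the ordinary Ozsv\'ath--Szab\'o correction term (which descends to a homomorphism $\Inv_\Q \to \Q$) and $\dl, \du$ are as in Section~\ref{sec:2.1}. These limits exist (indeed the sequences are eventually constant), they depend only on the local equivalence class of $W$, and they are unchanged if $W$ is replaced by any grading shift of itself, since shifting $W$ by $\Delta \in \Q$ shifts $W^{\otimes k}$ by $k\Delta$ and $d(W)$ by $\Delta$. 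The crucial input is the formula: for $W$ in good-family normal form with coefficients $\{c_i\}$,
\[
\Theta^-(W) = \begin{cases} -2\max\{i : c_i > 0\} & \text{if } \max\{i : c_i > 0\} > \max\{i : c_i < 0\}, \\ 0 & \text{otherwise,} \end{cases}
\]
and symmetrically $\Theta^+(W) = 2\max\{i : c_i < 0\}$ if $\max\{i : c_i < 0\} > \max\{i : c_i > 0\}$ and $\Theta^+(W) = 0$ otherwise (with the convention $\max\emptyset = -\infty$). In particular, if $W$ has any nonzero coefficient then exactly one of $\Theta^-(W), \Theta^+(W)$ is nonzero, whereas both vanish for the trivial class. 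Applying this to $Z = 0$ yields $\Theta^-(Z) = \Theta^+(Z) = 0$, so $Z$ can have no nonzero coefficient; hence all $d_i = 0$, and then $\delta = 0$ by the first paragraph.

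The outstanding point --- and the step I expect to be the main obstacle --- is the proof of the displayed formula for $\Theta^\pm$. This is essentially the stabilization statement of Corollary~\ref{cor:1.6} and a special case of Theorem~\ref{thm:1.5}; but to keep the logic self-contained one should argue directly, in the spirit of the proof of $\cite[Theorem 1.7]{DM}$. By Theorem~\ref{thm:4.2} any element in good-family normal form is a signed sum of type-one roots, so $W^{\otimes k}$ is a tensor product of type-one roots and their duals, hence, by Theorem~\ref{thm:3.1}, locally equivalent to a single monotone root; and the correction terms $\dl, \du$ of a monotone root are computed from its ordered parameters by the standard formula (cf.\ $\cite[Corollary 1.4]{DM}$), a single positive type-one root of index $i$ having $\dl = d - 2i$, $\du = d$, and its dual the mirror of this. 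As $k \to \infty$ the quantities $\dl(W^{\otimes k}) - k\, d(W)$ and $\du(W^{\otimes k}) - k\, d(W)$ stabilize, and their limiting values are dominated by the largest index $N = \max\{i : c_i \ne 0\}$ occurring, with sign determined by whether the type-one root of index $N$ enters the sum with positive or negative coefficient. The delicate part is tracking how the monotone normal form of $W^{\otimes k}$ evolves with $k$: here the swap operation of Theorem~\ref{thm:3.5} does the real work, letting one collapse the $k$-fold tensor product to a single monotone root whose ordered parameter list can be controlled explicitly and the limits read off.
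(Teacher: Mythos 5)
Your high-level strategy is sound and correctly identifies the right kind of invariant (something that detects the largest index with nonzero coefficient), but it takes a noticeably different and substantially heavier route than the paper, and the step you flag as the ``main obstacle'' is not adequately closed. The paper does not pass to iterated connected sums at all: it simply moves all negative-coefficient summands to the other side, so that each side of the local equivalence is a same-orientation sum of type-one roots (together with a degenerate type-one root accounting for the grading shift), and then invokes the already-established formula from \cite[Corollary 1.4]{DM}, namely $\du(M) - \dl(M) = \max_i \tilde\delta(M_i)$ for a same-orientation sum $M = \sum M_i$. Since $\du - \dl$ is a local-equivalence invariant and the two index sets are disjoint, both must be empty, and the grading shift then matches. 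That is the whole proof; no stabilization limit is needed.

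The genuine gap in your argument is in the proposed self-contained derivation of the $\Theta^\pm$ formula. You write that $W^{\otimes k}$ is, ``by Theorem~\ref{thm:3.1}, locally equivalent to a single monotone root.'' But Theorem~\ref{thm:3.1} takes as input a \emph{symmetric graded root} coming from the lattice-homology picture (equivalently, a standard complex as in Section~\ref{sec:2.3}), not an arbitrary element of $\Inv_\Q$. When $W$ involves duals of type-one roots, $W^{\otimes k}$ is a tensor product of standard complexes and their duals; its homology is not of the form $\Hm(R)$ for a graded root $R$, and Theorem~\ref{thm:3.1} does not apply. (Indeed, Theorem~\ref{thm:1.2} itself tells you that only very special linear combinations of the $Y_i$ are realized by single AR manifolds, and in particular a general element of $\Inv_\Q$ need not be equivalent to any monotone root.) The correct computation of $\du,\dl$ for such mixed-sign products is precisely the content of Section~\ref{sec:5} (spherical complexes, Lemmas~\ref{lem:5.1}--\ref{lem:5.5}), i.e.\ of Theorem~\ref{thm:1.5}, and you cannot appeal to Theorem~\ref{thm:1.5} or Corollary~\ref{cor:1.6} without being careful about circularity, since those results are stated in terms of the decomposition whose uniqueness you are trying to prove. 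The cleanest fix is to follow the paper: rather than iterating, rearrange so that both sides are same-orientation sums of type-one roots and apply $\du - \dl = \max \tilde\delta$ from \cite{DM} directly.
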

\begin{proof}
After appropriately moving factors around, we obtain a local equivalence
\[
\left(\sum_{i\in I} a_i M_i\right)[\Delta] = \left(\sum_{j \in J} a_j M_j\right)[\Delta'],
\]
with all the $a_i, a_j > 0$ and $I \cap J = \emptyset$. According to \cite[Corollary 1.4]{DM}, if $M = \sum_i M_i$ is a sum of type-one roots (with the same orientation), then
\[
\du(M) - \dl(M) = \max_i \tilde{\delta}(M_i).
\]
(Note that here $\du(M) = d(M)$ by \cite[Theorem 1.3]{DM}.) Let $I_{\Delta}$ (respectively, $I_{\Delta'}$) be the graded root consisting of a single $\ff[U]$-tower starting in grading $-\Delta$ (respectively, $-\Delta'$). Then it is easily seen that applying a grading shift by $\Delta$ is equivalent to tensoring with $I_{\Delta}$. Viewing $I_\Delta$ and $I_{\Delta'}$ as degenerate type-one roots, applying \cite[Corollary 1.4]{DM} to both sides of the above local equivalence shows that in fact we must have $I = J = \emptyset$. This is only possible if the two original linear combinations are the same. We then additionally see that $I_\Delta = I_{\Delta'}$, showing that $\Delta = \Delta'$, as desired.
\end{proof}

There are two obvious choices for good families of roots. Let $\mathcal{X} = \{X_i\}$ and $\mathcal{Y} = \{Y_i\}$ be given by  
\[
X_i = M(0, -2i)
\]
and
\[
Y_i = M(2i, 0).
\]
The family $\mathcal{Y}$ has the benefit that it is explicitly realized by the Brieskorn homology spheres $\Sigma(p, 2p-1, 2p+1)$ for $p \geq 3 $ odd. More precisely, it is a consequence of \cite[Theorem 1.8]{Stoffregen2} that for such $p$,
\[
h(\Sigma(p, 2p-1, 2p+1)) = M(p-1, 0),
\]
as displayed in Figure~\ref{fig:1}. Hence as $p$ varies, we obtain all of the elements of $\mathcal{Y}$. This immediately yields a proof of Theorem~\ref{thm:1.1}:
\begin{proof}[Proof of Theorem~\ref{thm:1.1}]
Since $h(\Sigma(2, 3, 5)) = I_2$, we can effect any even grading shift with an appropriate multiple of $h(\Sigma(2, 3, 5))$. Noting that $\Sigma(2, 3, 5)$ and the $\Sigma(p, 2p-1, 2p+1)$ are themselves Seifert fibered, the claim then follows from Theorem~\ref{thm:4.2} (together with Theorems~\ref{thm:2.3} and \ref{thm:2.8}) and Lemma~\ref{lem:4.3}.
\end{proof}

We now make a more careful study of the grading shift in the decomposition (\ref{eqn:5}). Fix any good family of roots $\mathcal{M} = \{M_i\}$, and let $Y$ be a linear combination of AR plumbed manifolds equipped with a self-conjugate $\spinc$-structure $\s$. We define $\Delta_{\mathcal{M}}(Y, \s) \in \mathbb{Q}$ to be the grading shift associated to $h(Y, \s)$ when using the family $\mathcal{M}$ in (\ref{eqn:5}), so that
\[
h(Y, \s) = \left( \sum_i c_iM_i \right)[\Delta_{\mathcal{M}}(Y, \s)].
\]
In view of Lemma~\ref{lem:4.3}, $\Delta_{\mathcal{M}}(Y, \s)$ is well-defined and descends to a homomorphism from the subgroup of $\Inv_{\Q}$ spanned by (standard complexes of) graded roots to $\Q$. More precisely, the grading shift $\Delta$ of (\ref{eqn:5}) clearly changes sign under orientation reversal and is additive under connected sum. Moreover, by Lemma~\ref{lem:4.3}, $\Delta_{\mathcal{M}}(Y, \s)$ depends only on the local equivalence class $h(Y, \s)$. 

Now let us consider the specific families $\mathcal{X}$ and $\mathcal{Y}$. If $M$ is a single monotone root, then we may rearrange the decomposition of Theorem~\ref{thm:4.2} to read
\begin{align*}
M &= \left(\sum_{i=1}^n M(0, r_i-h_i)[-h_i] \right) - \left(\sum_{i=1}^{n-1} M(0, r_i- h_{i+1})[-h_{i+1}]\right) \\
&= \left(\sum_{i=1}^n M(0, r_i-h_i) - \sum_{i=1}^{n-1} M(0, r_i- h_{i+1}) \right)[-h_1].
\end{align*}
Hence in the case of a single monotone root, the overall grading shift for the family $\mathcal{X}$ is given by $-h_1$. Meanwhile,
\begin{align*}
M &= \left(\sum_{i=1}^n M(h_i-r_i, 0) [-r_i]\right) - \left(\sum_{i=1}^{n-1} M(h_{i+1}-r_i, 0)[-r_i]\right) \\
&= \left(\sum_{i=1}^n M(h_i - r_i, 0) - \sum_{i=1}^{n-1} M(h_{i+1} - r_i, 0)\right)[-r_n].
\end{align*}
Thus, the overall grading shift if we use $\mathcal{Y}$ is given by $-r_n$. We now quote the following result from \cite{DM} (see also \cite{Stoffregen}, \cite{Dai}):

\begin{theorem}\cite[Section 8]{DM}
\label{thm:4.4}
Let $Y$ be an AR plumbed three-manifold and let $\s$ be a self-conjugate $\spinc$-structure on $Y$. If $M(h_1, r_1; \ldots; h_n, r_n)$ is the monotone root parameterizing the local equivalence class of $h(Y, \s)$, then $h_1 = d(Y, \s)$ and $r_n = - 2\bar{\mu}(Y, \s)$.
\end{theorem}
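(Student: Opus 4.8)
The plan is to establish the two equalities separately; in each case the strategy is to reduce to the combinatorial model of Section~\ref{sec:2.3} and then invoke the relevant computation from \cite[Section~8]{DM} (with the monopole counterpart in \cite{Dai} and the Seifert-fibered case in \cite{Stoffregen}). Conceptually, both $d(Y,\s)$ and $\bar{\mu}(Y,\s)$ are already visible in the symmetric graded root of $(Y,\s)$ --- the former as its topmost grading, the latter as the top of its $\inv$-fixed stem --- so the task is to match these gradings with the independently defined classical invariants.

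For $h_1 = d(Y,\s)$, I would first note that $d$ is a local-equivalence invariant: if $F\co C\to C'$ and $G\co C'\to C$ are local maps, each is grading-preserving and an isomorphism after inverting $U$, hence carries the top-graded non-$U$-torsion class of one homology to a non-$U$-torsion class of the same grading on the other, forcing $d(C)=d(C')$. By Theorems~\ref{thm:2.8} and~\ref{thm:3.1} one may therefore replace $h(Y,\s)$ by a locally equivalent monotone root $M=M(h_1,r_1;\ldots;h_n,r_n)$, whose top leaf $v_1$ lies in grading $h_1$ and generates a free $\ff[U]$-submodule realizing the non-torsion part of $\Hm(M)$, no non-torsion class having grading exceeding $h_1$. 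Since the normalizations of Section~\ref{sec:2.1} are fixed so that the top of the graded root of $h(Y,\s)$ is exactly $d(Y,\s)$ --- this being the standard computation of the $d$-invariant of a plumbed manifold in terms of its graded root, \cite{Plumbed}, \cite{Nem} --- and the top of the graded root is a local-equivalence invariant, this yields $h_1 = d(Y,\s)$.

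For $r_n = -2\bar{\mu}(Y,\s)$, the first step is to identify $r_n$ intrinsically inside $\Hm(M)$: the vertex supporting the central angle $\alpha_n$ is the topmost $J_0$-invariant vertex, i.e.\ the top of the infinite $\inv$-fixed stem, and it lies in grading $r_n$. The substantive step, which I expect to be the main obstacle, is to match this grading with $-2\bar{\mu}$; for this I would pass through N\'emethi's lattice-homology model of $(Y,\s)$. There the graded root is built from the Riemann--Roch function $\ell \mapsto -\tfrac12\big(\langle \ell,\ell\rangle + \langle k,\ell\rangle\big)$ on $L$ along a conjugation-symmetric computation sequence (with $k$ a characteristic vector representing $\s$), the action of $\inv_*$ corresponds to conjugation of characteristic vectors $c\mapsto -c$, and the $J_0$-fixed central vertex is the one supported at the symmetric point of that sequence. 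One then computes the grading of this vertex and, via Neumann's formula for $\bar{\mu}$ as a signature defect attached to the Wu class of the plumbing \cite{Neu}, \cite{Sieb}, checks that it equals $-2\bar{\mu}(Y,\s)$. Reconciling the lattice-homology grading with Neumann's combinatorial recipe --- including tracking all normalization constants and the factor of two built into the present grading convention --- is the delicate part of the argument; but since this is exactly the identification carried out in \cite[Section~8]{DM} (see also \cite{Dai}), in the present context the theorem reduces to assembling these ingredients, everything other than the lattice-homology comparison being routine bookkeeping.
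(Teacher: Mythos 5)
The paper does not actually prove Theorem~\ref{thm:4.4}; it is quoted verbatim from \cite[Section 8]{DM}, so the only meaningful benchmark is the argument there. Your sketch is a correct high-level account of it. The argument for $h_1 = d(Y,\s)$ is complete as written: $d$ is a local-equivalence invariant because a grading-preserving chain map that becomes an isomorphism after inverting $U$ sends the top $U$-nontorsion class to a $U$-nontorsion class of the same grading; and in the monotone root $M$ the class of $v_1$ generates the $U$-nontorsion tower with top grading $h_1$ (while the $J_0$-symmetrized sums of leaves are $U$-torsion), so $h_1 = d(M) = d(Y,\s)$ after the $[-2]$ normalization built into $h$. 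Your identification of $r_n$ as the grading of the vertex supporting the unique $J_0$-fixed angle of $M$, and your plan to match this with $-2\bar{\mu}(Y,\s)$ by passing to N\'emethi's lattice-homology model and Neumann's Wu-class signature-defect formula, is exactly the route taken in \cite[Section 8]{DM} (see also \cite{Dai}).

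One elision worth spelling out: the lattice-homology construction produces the full symmetric graded root $R$ of $(Y,\s)$, whereas $r_n$ is a parameter of the extracted monotone subroot $M$ of Theorem~\ref{thm:3.1}. Your argument implicitly uses that the topmost $J_0$-fixed vertex of $M$ sits at the same grading as the topmost $J_0$-fixed vertex of $R$ (equivalently, that the monotone extraction does not lower the $J_0$-fixed stem). This is true and is part of what \cite[Section 8]{DM} checks, but it is not automatic from local equivalence alone — $r_n$ is neither $\du$, $\dl$, nor $d$ — so it deserves an explicit sentence rather than being absorbed into ``routine bookkeeping.''
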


\noindent
It follows that if $Y$ is a single AR plumbed three-manifold, then
\[
\Delta_{\mathcal{X}}(Y, \s) = - d(Y, \s)
\]
and
\[
\Delta_{\mathcal{Y}}(Y, \s) = 2\bar{\mu}(Y, \s).
\]
Consider the former. Since we already know that the $d$-invariant is a homomorphism, the fact that $\Delta_{\mathcal{X}}$ is also a homomorphism implies that $\Delta_{\mathcal{X}}(Y, \s) = - d(Y, \s)$ for any linear combination of AR plumbed manifolds. A similar argument for $\Delta_\mathcal{Y}$ yields a proof of Theorem~\ref{thm:1.2}:

\begin{proof}[Proof of Theorem~\ref{thm:1.2}]
We have already proven the existence and the uniqueness parts of the decomposition. It thus remains to establish the equality $\Delta = \Delta_{\mathcal{Y}} = 2\bar{\mu}(Y, \s)$. It is easily seen from the definitions that the Neumann-Siebenmann invariant changes sign under orientation reversal and is additive under connected sum; that is, 
\[
\bar{\mu}(-Y, \s) = -\bar{\mu}(Y, \s)
\]
and
\[
\bar{\mu}(Y_1 \# Y_2, \s_1 \# \s_2) = \bar{\mu}(Y_1, \s_1) + \bar{\mu}(Y_2, \s_2).
\]
Even though we do not \textit{a priori} know that $\bar{\mu}$ is a homology cobordism invariant, we do know that $\Delta_\mathcal{Y}$ is also additive under connected sum and changes sign under orientation reversal. Hence the fact that $\Delta_{\mathcal{Y}}(Y, \s)$ coincides with $2\bar{\mu}(Y, \s)$ for each individual AR plumbed manifold establishes the equality in general for connected sums.\footnote{Note that if $Y = Y_1 \# \cdots \# Y_k$ is a linear combination of plumbed manifolds, then any self-conjugate $\spinc$-structure $\s$ on $Y$ is of the form $\s = \s_1 \# \cdots \# \s_k$, where each $\s_i$ is a self-conjugate $\spinc$-structure on $Y_i$.} The fact that $\Delta_\mathcal{Y}$ depends only on the local equivalence class $h(Y, \s)$ then implies the same for $\bar{\mu}(Y, \s)$.

Finally, note that writing the decomposition of Theorem~\ref{thm:4.2} in order of decreasing $\tilde{\delta}$ yields an alternating sum in which the leading basis element appears with a coefficient of $+1$. Since this expression is invariant under homology cobordism, this (together with the fact that orientation reversal corresponds to multiplication by $-1$) establishes the last part of Theorem~\ref{thm:1.2}.
\end{proof}


\section{Involutive Floer Correction Terms}
\label{sec:5}
We now turn to a computation of $\du$ and $\dl$ for linear combinations of AR manifolds. In \cite{DM} this was done for connected sums of AR manifolds with the same orientation, but it turns out that allowing mixed orientations makes the calculation significantly more difficult. Our main advantage here will be to observe that in light of Theorem~\ref{thm:1.2}, it suffices to consider the case when all of the summands are of projective type. To this end, we begin by defining a class of geometric complexes that will be useful for describing products of type-one roots.

Let $B^n$ be the standard $n$-dimensional unit ball, and let $J_0$ be the usual involution on $B^n$ given by reflection through the origin. There is an obvious $J_0$-equivariant cellular decomposition of $B^n$ which consists of exactly one symmetric pair of cells in each dimension $0$ through $n-1$, and a single $J_0$-invariant cell in dimension $n$. We denote the cells lying in dimension $i$ by $e_i$ and $J_0e_i$, with the understanding that $e_n = J_0e_n$. The cellular boundary operator is given by 
\[
\partial(e_i) = \partial(J_0e_i) = e_{i-1} + J_0e_{i-1},
\]
with the convention that $e_i = J_0e_i = 0$ for $i < 0$. See Figure~\ref{fig:18}.

Now let $d \in \mathbb{Q}$ and let $(\Delta_1, \Delta_2, \ldots, \Delta_n)$ be a sequence of non-negative even integers. We associate a geometric complex to these parameters as follows. The skeleton of our complex is defined to be the above cellular decomposition of $B^n$. The function $\gr$ is defined by setting
\[
\gr(e_0) = \gr(J_0e_0) = d
\]
and
\[
\gr(e_i) = \gr(J_0e_i) = d - \left(\sum_{j = 1}^i \Delta_j\right)
\]
for $1 \leq i \leq n$. Note that this means the differential on our complex is given by
\[
\partial(e_i) = \partial(J_0e_i) = U^{\Delta_i/2}(e_{i-1} + J_0e_{i-1}).
\]
We call such a complex a \textit{spherical complex} and denote it by 
\[
S = S(d, n; \Delta_1, \ldots, \Delta_n).
\] 
See Figure~\ref{fig:18}. 
\begin{figure}[h!]
\center
\includegraphics[scale=1]{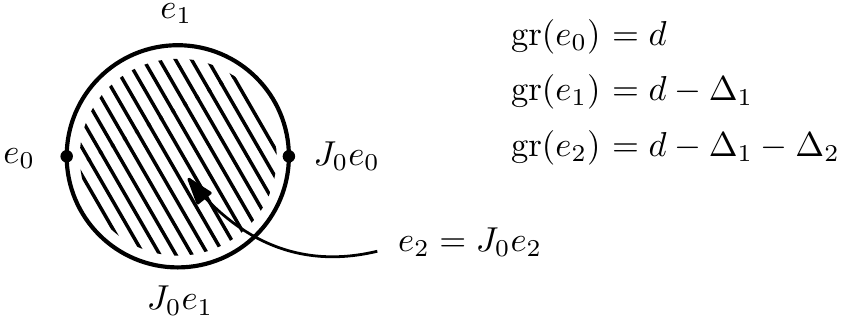}
\caption{Spherical complex $S(d, n; \Delta_1, \ldots, \Delta_n)$ with $n = 2$. } \label{fig:18}
\end{figure}
\noindent

\begin{remark}
Note that spherical complexes also arise naturally in the setting of $\mathrm{Pin}(2)$-equivariant Floer homology.  In particular, a connected sum of (negatively oriented) Seifert integer homology spheres $\sum_{i=1}^n \Sigma_i$, with $\tilde{\delta}_i:=\tilde{\delta}(\Sigma_i)$, is chain locally equivalent to the unreduced suspension of a certain free $\mathrm{Pin}(2)$-space $X$, with a natural projection map $\pi\co X \to \partial S(0,n;\tilde{\delta}_1,\dots,\tilde{\delta}_n)$ whose fibers are determined by the $\tilde{\delta}_i$ (cf.\ \cite[Section 4]{Stoffregen2}). 
\end{remark}

\begin{lemma}\label{lem:5.1}
Let $M_1, \ldots, M_n$ be a family of type-one roots parameterized by $M_i = (h^i, r^i)$ for $1 \leq i \leq n$. Without loss of generality, suppose that
\[
\tilde{\delta}(M_1) \geq \tilde{\delta}(M_2) \geq \cdots \geq \tilde{\delta}(M_n).
\]
Then we have the local equivalence
\[
M_1 + \cdots + M_n = S(d, n; \tilde{\delta}(M_1), \ldots, \tilde{\delta}(M_n)),
\]
where $d = h^1 + \cdots + h^n$.
\end{lemma}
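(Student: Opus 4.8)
The plan is to prove this by induction on $n$, using Theorem~\ref{thm:4.2} as the base of the argument together with the swapping operation of Theorem~\ref{thm:3.5}. For $n = 1$ there is nothing to check: $S(d, 1; \tilde\delta(M_1))$ is by definition a two-cell complex consisting of a symmetric pair $e_0, J_0 e_0$ in grading $d = h^1$ and a single invariant $1$-cell $e_1$ with $\gr(e_1) = d - \tilde\delta(M_1) = h^1 - (h^1 - r^1) = r^1$, so its differential is $\partial e_1 = U^{(h^1 - r^1)/2}(e_0 + J_0 e_0)$, which is exactly the standard complex of the projective root $M(h^1, r^1)$. For the inductive step, I would first consider the tensor product $(M_1 + \cdots + M_{n-1}) \otimes M_n$. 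By the inductive hypothesis the first factor is locally equivalent to $S(d', n-1; \tilde\delta(M_1), \ldots, \tilde\delta(M_{n-1}))$ where $d' = h^1 + \cdots + h^{n-1}$, so it suffices to show that the tensor product of a spherical complex with a projective root is again a spherical complex.

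The main work is therefore a local-equivalence computation: given $S = S(d', n-1; \Delta_1, \ldots, \Delta_{n-1})$ with $\Delta_1 \geq \cdots \geq \Delta_{n-1} \geq \Delta_n := \tilde\delta(M_n)$, and the projective root $P = M(h^n, r^n) = S(h^n, 1; \Delta_n)$, I want to produce a local equivalence $S \otimes P \simeq S(d' + h^n, n; \Delta_1, \ldots, \Delta_n)$. The skeleton of $S \otimes P$ is the product $B^{n-1} \times B^1$, which is homeomorphic to $B^n$; the issue is entirely in the grading function. The strategy here mirrors the proof of Theorem~\ref{thm:3.5}: I would exhibit explicit cellular maps $f$ and $g$ between the skeleton of $S \otimes P$ and the standard cellular decomposition of $B^n$ underlying $S(d'+h^n, n; \Delta_1, \ldots, \Delta_n)$, check $J_0$-equivariance (both involutions are reflection through the origin, so this is transparent), check that each $0$-cell goes to an odd number of $0$-cells, and — the crux — verify the lifting condition of Lemma~\ref{lem:3.3}. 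The lifting inequalities are where the hypothesis $\tilde\delta(M_1) \geq \cdots \geq \tilde\delta(M_n)$ gets used: the $n$-cell of the product lives in grading $d' + h^n - (\sum_{j<n}\Delta_j) - \Delta_n$, and the ordering of the $\Delta_j$ is precisely what guarantees the grading drops monotonically along the boundary chains so that $f$ (collapsing the extra interval direction appropriately) and $g$ (expanding along it) both satisfy $\gr(\text{image cell}) \geq \gr(\text{source cell})$. This is the step I expect to be the main obstacle, since getting the bookkeeping right for the product cell structure requires care; but it is morally identical to the collapsing/expanding maps already constructed in the proof of Theorem~\ref{thm:3.5}, so I would lean on that machinery.

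Alternatively — and this is likely cleaner to write — I would avoid the tensor-product-with-a-ball computation and instead argue directly from Theorem~\ref{thm:4.2} applied to a suitable \emph{monotone} root. Concretely, by Theorem~\ref{thm:4.2} every linear combination $M_1 + \cdots + M_n$ of type-one roots (after arranging them in decreasing $\tilde\delta$ and accounting for grading shifts via degenerate type-one roots as in the proof of Lemma~\ref{lem:4.3}) is the difference of the ``staircase'' monotone root with parameters built from the $h^i$ and $r^i$ and a smaller such root; iterating, one sees that $M_1 + \cdots + M_n$ has a canonical representative whose geometric realization is exactly the spherical complex. I would then simply observe that the spherical complex $S(d,n;\Delta_1,\ldots,\Delta_n)$ \emph{is} by construction the geometric realization of this representative: its $\ff[U]$-homology in low gradings is generated by a single $0$-cell (Lemma~\ref{lem:3.3}(a)), its top cell records the sum $\sum \Delta_i$ = total drop, and the contractibility of $B^n$ guarantees the $U^{-1}$-homology is a single tower, so $S$ is an $\inv$-complex. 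To finish, I would set up the two local maps between $S(d,n;\Delta_1,\ldots,\Delta_n)$ and the product complex $S(d',n-1;\ldots)\otimes P$ using the explicit cellular maps indicated above and invoke Lemma~\ref{lem:3.3}. Either way, the essential content is the same lifting-condition verification, which is the one place the ordering hypothesis on the $\tilde\delta(M_i)$ is indispensable.
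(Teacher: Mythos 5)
Your primary approach --- induction on $n$, reducing to a local equivalence between a spherical complex tensored with a single projective root and a one-larger spherical complex, verified by explicit cellular maps between skeleta and Lemma~\ref{lem:3.3} --- is essentially the paper's. The one variation is that you peel off the summand with the \emph{smallest} $\tilde\delta$ and append it, whereas the paper peels off the \emph{largest} (taking $h - r \geq \Delta_i$ for all $i$) and prepends it; this changes the explicit maps (in the paper's orientation $f$ kills $v \times x_i$ for $i \geq 1$ and sends $\alpha \times x_i \mapsto y_{i+1}$, whereas with your orientation $f$ must instead send $v \times x_i \mapsto y_i$, kill $\alpha \times x_i$ for $i < n-1$, and send the top cell $\alpha \times x_{n-1} \mapsto y_n$), but in either case the lifting condition reduces to the ordering of the $\Delta_i$, as you anticipate. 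Your ``alternative'' via Theorem~\ref{thm:4.2} does not stand on its own: that theorem expresses a single monotone root as a \emph{signed} combination of type-one roots, which is not obviously related to identifying the unsigned sum $M_1 + \cdots + M_n$ with a spherical complex, and your sketch of it circles back to ``the explicit cellular maps indicated above'' --- i.e., the same tensor-product computation --- to finish. You should simply drop the alternative and carry out the cellular maps, which is where all the content lies.
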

\begin{proof}
We proceed by induction on $n$. If $n = 1$, then the spherical complex in question is the same as the standard complex defined in Section~\ref{sec:2.3}. To establish the inductive step, it suffices to prove the local equivalence 
\[
M(h, r) + S(d, n; \Delta_1, \ldots, \Delta_n) = S(d+h, n+1; h-r, \Delta_1, \ldots, \Delta_n),
\]
where without loss of generality we have assumed that $h - r \geq \Delta_i$ for all $\Delta_i$. Denote the cells of $M(h, r)$ by $v$, $J_0v$, and $\alpha = J_0\alpha$. For convenience, we denote the cells of the left-hand spherical complex by $x_i$ for $0 \leq i \leq n$, and the cells of the right-hand spherical complex by $y_i$ for $0 \leq i \leq n+1$. Note that $x_n = J_0x_n$ and $y_{n+1} = J_0y_{n+1}$. 

As usual, we apply Lemma~\ref{lem:3.3}. The skeleta of the product complex on the left and the spherical complex on the right are displayed in Figure~\ref{fig:19}. To define a cellular map $f$ from the left- to the right-hand side, we send
\begin{align*}
&f(v \times x_0) = f(v \times J_0x_0) = y_0, \\
&f(v \times x_i) = f(v \times J_0x_i) = 0 \text{ for } 1 \leq i \leq n, \text{ and} \\
&f(\alpha \times x_i) = y_{i+1} \text{ for } 0 \leq i \leq n,
\end{align*}
extending $J_0$-equivariantly. It is easily checked that this preserves the cellular differential by computing (for example)
\[
\partial f(\alpha \times x_i) = \partial y_{i+1} = y_i + J_0y_i
\]
and
\[
f\partial (\alpha \times x_i) = f( \alpha \times (x_{i-1} + J_0x_{i-1}) + (v + J_0v) \times x_i ) = y_i + J_0y_i.
\]
To see that $f$ satisfies the lifting condition, we check that $f$ preserves $\gr$ wherever it is nonzero:
\[
\gr(v \times x_0) = \gr(v \times J_0x_0) = h + d = \gr(y_0)
\]
and
\[
\gr(\alpha \times x_i) = r + \left(d - \sum_{j=1}^i \Delta_j\right) = (d + h) - \left( (h-r) + \sum_{j=1}^i \Delta_j\right) = \gr(y_{i+1}).
\]
This gives the local map in one direction.

To define the map in the other direction, set
\begin{align*}
&g(y_i) = v \times x_i + \alpha \times J_0x_{i-1} \text{ for } 0 \leq i \leq n, \text{ and}\\
&g(y_{n+1}) = \alpha \times x_n,
\end{align*}
again extending $J_0$-equivariantly. Note that $g(y_0) = v \times x_0$. We check that this preserves the cellular differential (over $\mathbb{Z}/2\mathbb{Z}$) by computing
\begin{align*}
\partial g(y_i) &= \partial(v \times x_i + \alpha \times J_0x_{i-1}) \\
&= v \times (x_{i-1} + J_0x_{i-1}) + (v + J_0v) \times J_0x_{i-1} + \alpha \times (x_{i-2} + J_0 x_{i-2}) \\
&= v \times x_{i-1} + J_0v \times J_0x_{i-1} + \alpha \times x_{i-2} + \alpha \times J_0 x_{i-2}
\end{align*}
and
\begin{align*}
g\partial (y_i) &= g(y_{i-1} + J_0y_{i-1}) \\
&= v\times x_{i-1} + \alpha \times J_0x_{i-2} + J_0v\times J_0x_{i-1} + \alpha \times x_{i-2}
\end{align*}
for $1 \leq i \leq n$. A similar computation holds for $y_{n+1}$, utilizing the fact that $J_0x_n = x_n$. To see that $g$ satisfies the lifting condition, consider the two summands $v \times x_i$ and $\alpha \times J_0x_{i-1}$ of $g(y_i)$. The grading of the second term follows from the computation of the previous paragraph and is equal to $\gr(y_i)$. The grading of the first term is given by
\[
\gr(v \times x_i) = h + d - \left( \sum_{j = 1}^i \Delta_j \right).
\]
This is greater than or equal to $\gr(y_i)$, since $h-r \geq \Delta_i$ for all $\Delta_i$. Applying Lemma~\ref{lem:3.3} establishes the local equivalence and completes the proof. We leave it to the reader to give geometric interpretations to $f$ and $g$; see Figure~\ref{fig:19}.
\end{proof}
\begin{figure}[h!]
\center
\includegraphics[scale=1]{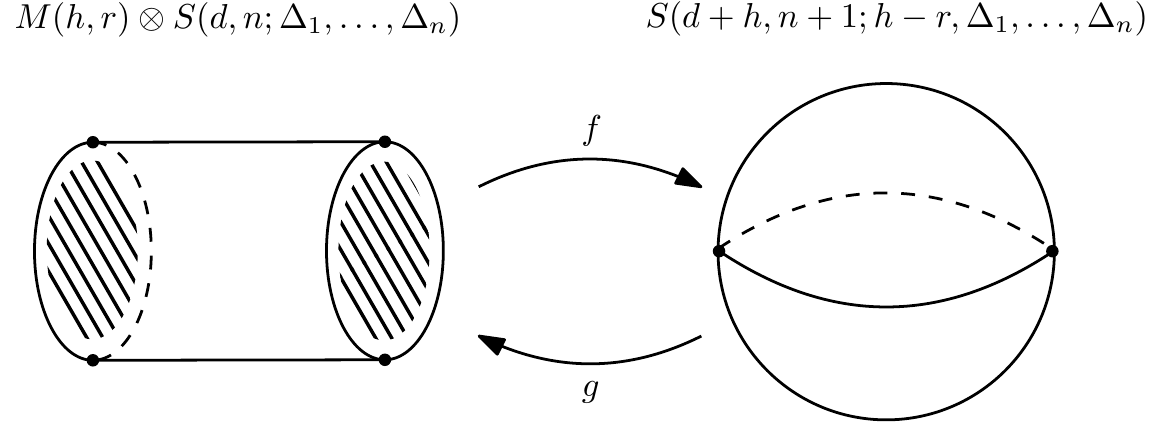}
\caption{Skeleta of the complexes considered in the proof of Lemma~\ref{lem:5.1}.} \label{fig:19}
\end{figure}

It follows from Lemma~\ref{lem:5.1} that to understand linear combinations of type-one roots, it suffices to consider the case of a single spherical complex, tensored with the inverse of another spherical complex. Denote the inverse complex of $S$ by $S^\vee = S(d, n; \Delta_1, \ldots, \Delta_n)^\vee$. This may be explicitly described as follows. Dualizing the skeleton of $S$ yields a complex with $2n+1$ generators, which we denote by $e_i^\vee$ and $J_0e_i^\vee$ for $1 \leq i \leq n$ (with the understanding that $e_n^\vee = J_0e_n^\vee$). The differential on this complex is given by  
\begin{align*}
&\partial (e_n^\vee) = 0, \\
&\partial (e_{n-1}^\vee) = \partial (J_0e_{n-1}^\vee) = e_n^\vee, \text{ and} \\
&\partial (e_i^\vee) = \partial(J_0e_i^\vee) = e_{i+1}^\vee + J_0e_{i+1}^\vee \text{ for } 0 \leq i < n-1.
\end{align*}
We refer to this data as the skeleton of $S^\vee$, even though technically we have not given it an interpretation as a cellular complex in the usual sense. (See however Remark~\ref{rem:5.2}.) An explicit computation (simply by taking the kernel and image of $\partial$) shows that the homology of the skeleton is isomorphic to $\ff$ and is generated by $e_0^\vee + J_0e_0^\vee$.

We define $\gr$ on the skeleton of $S^\vee$ to be the negative of $\gr$ on $S$ after dualizing, so that
\[
\gr(e_0^\vee) = \gr(J_0e_0^\vee) = - d
\]
and
\[
\gr(e_i^\vee) = \gr(J_0e_i^\vee) = -d + \sum_{j = 1}^i \Delta_j.
\]
We give the dual skeleton tensored with $\ff[U]$ a grading by subtracting the cellular dimension, so that $e_i^\vee$ has grading $\gr(e_i^\vee) - i$. For clarity, we refer to this quantity (and similarly its counterpart $\gr(e_i) + i$ in the non-dualized case) as the \textit{chain complex grading}, in order to distinguish it from $\gr$. As before, we declare $U$ to be of degree $-2$. Then the obvious prescription
\[
\partial (\square_i^\vee) = \sum _{\square_{i+1}^\vee \in \text{ bdry} (\square_i^\vee)} U^{(\gr(\square_{i+1}^\vee)-\gr(\square_i^\vee))/2}\square_{i+1}^\vee
\]
has degree $-1$ and turns this into a graded chain complex, which is evidently the inverse of $S$ in the sense of \cite[Section 8]{HMZ}. Explicitly, we have
\begin{align*}
&\partial (e_n^\vee) = 0, \\
&\partial (e_{n-1}^\vee) = \partial (J_0e_{n-1}^\vee) = U^{\Delta_n/2}e_n^\vee, \text{ and} \\
&\partial (e_i^\vee) = \partial(J_0e_i^\vee) = U^{\Delta_{i+1}/2}(e_{i+1}^\vee + J_0e_{i+1}^\vee) \text{ for } 0 \leq i < n-1.
\end{align*}

\begin{remark}\label{rem:5.2}
It is possible to give the dual complex $S^\vee$ a geometric interpretation by using a \textit{relative} cell complex, as in Figure~\ref{fig:20}. More precisely, we can give $B^n$ a $J_0$-equivariant decomposition in such a way so that the skeleton of $S^\vee$ is identified with the relative cellular complex $(B^n, S^{n-1})$. (This is simply a manifestation of Poincar\'e duality applied to the manifold-with-boundary $B^n$.) This picture can be used to gain some geometric intuition for Lemmas~\ref{lem:5.3} and \ref{lem:5.4} below. Note that the homology of the skeleton in this case is still isomorphic to $\ff$, even though its generator $[x]$ no longer has dimensional grading zero.
\end{remark}

\begin{figure}[h!]
\center
\includegraphics[scale=1]{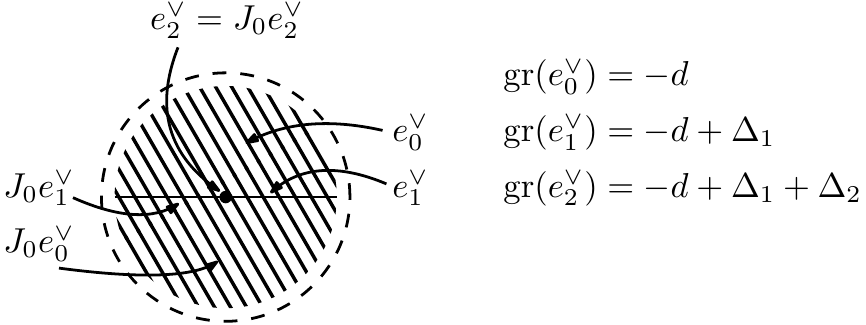}
\caption{Dual complex $S^\vee(d, n; \Delta_1, \ldots, \Delta_n)$ with $n = 2$.} \label{fig:20}
\end{figure}

We are now in a position to carry out our computation of $\du$ and $\dl$. We will henceforth suppress writing down powers of $U$, so that when we write a sum of generators $\Sigma$ (of the same dimension), we mean that each term of $\Sigma$ is implicitly multiplied by the appropriate power of $U$ so that its (chain complex) grading is equal to that of the lowest-grading generator in $\Sigma$. By $\gr(\Sigma)$ we thus mean the minimum of $\gr$ over the terms appearing in $\Sigma$. Similarly, when we write that two sums of generators are equal to each other, we mean equality after multiplying one or both sides by a sufficiently high power of $U$. 

We proceed by explicitly understanding the mapping cone of $S_1 \otimes S_2^\vee$. Recall from Section~\ref{sec:2.1} that if $(C, \inv)$ is an $\inv$-complex, then its mapping cone is given by
\[
C_* \xrightarrow{\phantom{o} Q (1+\inv) \phantom{o}} Q \cdot C_* [-1].
\]
Explicitly, this consists of the complex $C[-1] \otimes \ff[Q]/(Q^2)$, together with the total differential 
\[
\partial_{tot} = \partial + Q \cdot (1 + \inv),
\] 
where $\partial$ is the differential on $C$. Thus elements of $C$ in the mapping cone which are not decorated by a $Q$ have grading one higher than their usual chain complex grading in $C$, while multiplying by $Q$ shifts grading by $-1$. Where confusion with other gradings is possible, we will refer to this as the \textit{total grading} or the \textit{mapping cone grading}. We also remind the reader of our convention from Section~\ref{sec:2.1} regarding the definition of $\du$ and $\dl$ for $\inv$-complexes; these are shifted by two relative to their usual definition. See (\ref{eqn:dudl}).

Recall that in sufficiently low gradings, the involutive homology $\HFIm$ consists entirely of two $U$-nontorsion towers, one of which is taken to the other via multiplication by $Q$. We refer to these as the $\du$- and $\dl$-towers depending on their mod $2$ gradings relative to $\tau$. (The $\du$-tower is the tower in the image of $Q$.) It is easily checked that an element $x \in \HFIm$ satisfies the conditions in the definition of $\du$ or $\dl$ if and only if sufficiently high $U$-powers of $x$ lie in the $\du$- or $\dl$-tower, respectively. The following lemma gives preferred representatives of these elements in $\CFIm(S_1 \otimes S_2^\vee)$.

For brevity, we now stop writing the product symbol $\times$ and the subscript on $J_0$. 

\begin{lemma}\label{lem:5.3}
Let $S_1 = S(d_1, m; A_1, \ldots, A_m)$ and $S_2 = S(d_2, n; B_1, \ldots, B_n)$. Denote the generators of $S_1$ by $x_i$ and the generators of $S_2$ by $y_i$. In sufficiently low gradings, elements in the $\du$-tower of $\HFIm(S_1 \otimes S_2^\vee)$ may be represented by $U$-powers of
\[
Q \cdot x_0(y_0^\vee + Jy_0^\vee).
\]
Similarly, in sufficiently low gradings, elements in the $\dl$-tower may be represented by $U$-powers of
\[
x_0 (y_0^\vee + Jy_0^\vee) + Q \cdot x_1(y_0^\vee + Jy_0^\vee).
\]
Recall, in the latter equation, the convention that we implicitly homogenize by multiplying $x_0(y_0^\vee+Jy_0^\vee)$ by an appropriate power of $U$. 
\end{lemma}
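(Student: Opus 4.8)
The plan is to work directly with the mapping cone of the tensor complex $S_1 \otimes S_2^\vee$ and identify explicit cycles whose $U$-powers generate the two nontorsion towers of $\HFIm$. Recall that the involutive complex is $C[-1] \otimes \ff[Q]/(Q^2)$ with total differential $\partial_{tot} = \partial + Q(1+J)$, where here $J = J_1 \otimes J_2^\vee$. First I would record the structure of $S_1 \otimes S_2^\vee$ in sufficiently low grading: the underlying $\ff[U]$-module has a single nontorsion tower generated by $(x_0 + Jx_0) \otimes (y_0^\vee + Jy_0^\vee)$ after inverting $U$, since each factor has homology $\ff$ generated by the antisymmetric-looking element $x_0+Jx_0$ (resp.\ $y_0^\vee + Jy_0^\vee$) — but note $S_1$ has $H_*(S_1) \cong \ff$ generated by $x_0$ alone in low grading (the skeleton is a ball, contractible), whereas the dual skeleton of $S_2^\vee$ has homology generated by $y_0^\vee + Jy_0^\vee$. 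So after inverting $U$, $H_*(S_1\otimes S_2^\vee) \cong \ff[U,U^{-1}]$ generated by $x_0(y_0^\vee + Jy_0^\vee)$, which is $J$-invariant up to the relation $Jx_0 = x_0$ in homology. The point is that the conjugation action on this tower is trivial, so $(1+J)$ kills the generator on homology and the mapping cone has two towers: one ``undecorated'' ($\dl$) and one ``$Q$-decorated'' ($\du$).

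Next I would verify that $Q\cdot x_0(y_0^\vee + Jy_0^\vee)$ is a cycle in the mapping cone and is not a boundary in low grading. Being a cycle is immediate: $\partial(x_0(y_0^\vee+Jy_0^\vee)) = 0$ in low grading (it represents the homology generator), and the $Q(1+J)$ term vanishes on a $Q$-decorated element since $Q^2 = 0$. That $Q\cdot x_0(y_0^\vee+Jy_0^\vee)$ survives to homology and all its $U$-powers are nonzero follows because it generates the image-of-$Q$ part of $\HFIm$ in low grading, which is the $\du$-tower by definition. Its total grading is $\gr(x_0) + \gr(y_0^\vee) - 1 = d_1 - d_2 - 1$ (before the conventional shifts), matching where the $\du$-tower should sit. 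The main thing to check carefully is that this element is genuinely in the image of $Q$ on homology — i.e.\ that $x_0(y_0^\vee + Jy_0^\vee)$, viewed undecorated, maps under $Q(1+J)$ to something homologous to $Q\cdot x_0(y_0^\vee+Jy_0^\vee)$; this requires computing $(1+J)(x_0(y_0^\vee+Jy_0^\vee))$ and showing it is a $\partial$-boundary plus the target element. Since $J$ fixes $y_0^\vee+Jy_0^\vee$ and sends $x_0 \mapsto Jx_0$, we get $(1+J)(x_0(y_0^\vee+Jy_0^\vee)) = (x_0+Jx_0)(y_0^\vee+Jy_0^\vee)$, and one checks $x_0 + Jx_0 = \partial(e_1)$-type boundary in $S_1$ in low grading — hence this is a boundary, confirming the $Q$-decorated generator is a cycle that is also hit appropriately.

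For the $\dl$-tower I would produce a cycle of the form $x_0(y_0^\vee+Jy_0^\vee) + Q\cdot z$ and show $z = x_1(y_0^\vee + Jy_0^\vee)$ works. The cycle condition forces $\partial(Q z) = Q(1+J)(x_0(y_0^\vee+Jy_0^\vee))$, i.e.\ (after stripping $Q$) $\partial z = (1+J)(x_0(y_0^\vee+Jy_0^\vee)) = (x_0 + Jx_0)(y_0^\vee+Jy_0^\vee)$. In $S_1$ the boundary of the dimension-$1$ cell $x_1$ is $U^{A_1/2}(x_0 + Jx_0)$, so $\partial(x_1(y_0^\vee+Jy_0^\vee)) = U^{A_1/2}(x_0+Jx_0)(y_0^\vee+Jy_0^\vee)$; after the implicit homogenization (multiplying $x_0(y_0^\vee+Jy_0^\vee)$ by $U^{A_1/2}$) this is exactly the required relation. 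So $x_0(y_0^\vee+Jy_0^\vee) + Q\cdot x_1(y_0^\vee+Jy_0^\vee)$ (suitably homogenized) is a $\partial_{tot}$-cycle. Then I would argue it is $U$-nontorsion and not in the image of $Q$: its leading (undecorated) term $x_0(y_0^\vee+Jy_0^\vee)$ generates the nontorsion part of $\HFm \subset \HFIm$, so no $U$-power dies and no $U$-power lands in $\operatorname{Im}(Q)$, which by the characterization recalled just before the lemma means it represents the $\dl$-tower.

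The main obstacle I anticipate is bookkeeping the $U$-powers and total gradings correctly so that ``equality after multiplying by a high power of $U$'' and ``implicit homogenization'' are used consistently — in particular making sure the element $x_1(y_0^\vee+Jy_0^\vee)$ has the right grading relative to $x_0(y_0^\vee+Jy_0^\vee)$ and that the stated representatives genuinely live in $\CFIm(S_1\otimes S_2^\vee)$ rather than its localization. A secondary subtlety is justifying that these particular cycles generate the towers rather than merely lying in them: this rests on the low-grading computation that $\HFIm(S_1\otimes S_2^\vee)$ has exactly the expected two-tower form (which follows from the contractibility of the skeleton of $S_1$, the $\ff$-homology of the dual skeleton of $S_2^\vee$, Lemma~\ref{lem:3.3}, and the general structure of involutive homology in low gradings), so I would invoke that structural input rather than reprove it.
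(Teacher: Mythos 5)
Your proposal is correct and follows essentially the same route as the paper: identify $x_0(y_0^\vee + Jy_0^\vee)$ as the generator of the underlying $U$-nontorsion tower via the contractibility of the skeleton of $S_1$, the explicit $\ff$-homology of the dual skeleton of $S_2^\vee$, and K\"unneth; check that the two displayed expressions are $\partial_{tot}$-cycles (your derivation of $z = x_1(y_0^\vee + Jy_0^\vee)$ by solving $\partial z = (1+J)x_0(y_0^\vee + Jy_0^\vee)$, with the $U^{A_1/2}$ homogenization, is exactly the content of the paper's ``straightforward to check''); and invoke the mapping-cone structure to conclude these represent the $\du$- and $\dl$-towers. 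One small clarification worth flagging: $Q\cdot x_0(y_0^\vee + Jy_0^\vee)$ is trivially in the chain-level image of $Q$-multiplication (it equals $Q$ times the undecorated cycle), so the paragraph attempting to verify this via the map $Q(1+J)$ conflates two different maps; what actually requires justification is that this class is nonzero in homology --- which follows from the mapping cone exact sequence together with the $U$-nontorsion of the undecorated generator --- while your observation that $(x_0+Jx_0)(y_0^\vee + Jy_0^\vee)$ is $\partial$-exact is precisely the input you then correctly use to construct the $\dl$ cycle.
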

\begin{proof}
It is straightforward to check that $\partial_{tot}$ applied to both of the above expressions is zero. To prove the lemma, consider the term $x_0 (y_0^\vee + Jy_0^\vee)$, viewed as a cycle in the usual homology of $S_1 \otimes S_2^\vee$. Note that this element generates the homology of the skeleton of $S_1 \otimes S_2^\vee$, which is $\ff$ by the K\"unneth formula. By the same argument as in Lemma~\ref{lem:3.3}, we then see that $x_0 (y_0^\vee + Jy_0^\vee)$ is $U$-nontorsion in $H_*(S_1 \otimes S_2^\vee)$. An easy argument using the mapping cone exact sequence (see \cite[Proposition 4.6]{HMinvolutive}) shows that the $U$-powers of $Q \cdot x_0(y_0^\vee + Jy_0^\vee)$ must then generate the $\du$-tower. The claim for $x_0 (y_0^\vee + Jy_0^\vee) + Q \cdot x_1(y_0^\vee + Jy_0^\vee)$ follows from the fact that in sufficiently low gradings, multiplication by $Q$ is an isomorphism (on the level of homology) from the $\dl$-tower onto the $\du$-tower.
\end{proof}

We now establish lower bounds for $\du(S_1 \otimes S_2^\vee)$ and $\dl(S_1 \otimes S_2^\vee)$. Our strategy for doing this will be to exhibit explicit elements in the mapping cone of $S_1 \otimes S_2^\vee$ which are homologous (after multiplying by sufficient powers of $U$) to the generators of Lemma~\ref{lem:5.3}. Since this means that these elements satisfy the relevant conditions in the definitions of $\du$ and $\dl$, taking their (mapping cone) gradings lead to lower bounds for $\du$ and $\dl$. As with Remark~\ref{rem:5.2}, it is possible to frame this argument in a more geometric fashion, but since this will not be needed elsewhere in the paper, we take a more direct route and simply present the algebraic details.

\begin{lemma}\label{lem:5.4}
Let $S_1 = S(d_1, m; A_1, \ldots, A_m)$ and $S_2 = S(d_2, n; B_1, \ldots, B_n)$. Define
\begin{align*}
P_i &= \sum_{j=1}^i B_j - \sum_{j=1}^i A_j \text{ for } 0 \leq i \leq \min(m, n),\\
Q_i &= \sum_{j=1}^i B_j - \sum_{j=1}^{i + 1} A_j \text{ for } 0 \leq i \leq \min(m-1, n),\text{ and}\\
R_i &= \sum_{j=1}^i B_j - \sum_{j=1}^{i -1 } A_j \text{ for } 1 \leq i \leq \min(m + 1, n).
\end{align*}
Note that $P_0 = 0$ and $Q_0 = -A_1$. Then
\begin{align*}
\du(S_1 \otimes S_2^\vee) \geq d_1 - d_2 + \max \{ P_0, &\min(R_1, P_1), \\
&\min(R_1, R_2, P_2), \\
&\ \ \ \ \ \ \ \ \ \ \ \ \ \vdots \\
&\min(R_1, R_2, \ldots, R_{\min(m+1, n)}, P_{\min(m+1, n)})\},
\end{align*}
with the understanding that if $\min(m+1, n) = m + 1$, the $P_{\min(m+1, n)}$ in the last line should be deleted. Similarly,
\begin{align*}
\dl(S_1 \otimes S_2^\vee) \geq d_1 - d_2 + \max \{ &\min(P_0, Q_0), \\
&\min(P_0, P_1, Q_1), \\
&\ \ \ \ \ \ \ \ \ \ \ \ \ \vdots \\
&\min(P_0, P_1, P_2, \ldots, P_{\min(m,n)}, Q_{\min(m,n)})\},
\end{align*}
with the understanding that if $\min(m, n) = m$, the $Q_{\min(m, n)}$ in the last line should be deleted.
\end{lemma}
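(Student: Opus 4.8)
The plan is to prove both inequalities by exhibiting, for each term appearing in the two maxima, an explicit $\partial_{tot}$-cycle in $\CFIm(S_1 \otimes S_2^\vee)$ that sits in the appropriate total grading and represents an element of the $\du$- or $\dl$-tower; in view of Lemma~\ref{lem:5.3} and the definitions of the involutive correction terms, such a cycle immediately gives the corresponding lower bound, and taking the maximum over all the cycles we construct yields the stated formulas. Write the generators of $S_1$ as $x_i, Jx_i$ and those of $S_2^\vee$ as $y_j^\vee, Jy_j^\vee$, and recall $\partial_{tot} = \partial + Q(1+\iota)$. The first point is a grading computation: $\gr\big(x_i(y_i^\vee+Jy_i^\vee)\big) = d_1-d_2+P_i$, $\gr\big(x_{i+1}(y_i^\vee+Jy_i^\vee)\big) = d_1-d_2+Q_i$, and $\gr\big(x_{i-1}(y_i^\vee+Jy_i^\vee)\big) = d_1-d_2+R_i$. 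Thus $P_i, Q_i, R_i$ are precisely the $\gr$-values (relative to $d_1-d_2$) of the cells that will appear in the staircases below, and a minimum of such quantities is exactly the grading to which a sum of the corresponding cells must be $U$-homogenized.

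For the $\dl$ bound I would, for each $0 \le k \le \min(m,n)$, build a staircase cycle $\zeta_k$ supported on a width-one ladder of cells in the product grid running from the seed $x_0(y_0^\vee+Jy_0^\vee)$ out to the terminal step $x_{k+1}(y_k^\vee+Jy_k^\vee)$ (the terminal cell omitted when $k = m$), with the part of the ladder carrying $U$-nontorsion homology left undecorated and the rest multiplied by $Q$ — generalizing the $k = 0$ representative $U^{A_1/2}x_0(y_0^\vee+Jy_0^\vee) + Q\,x_1(y_0^\vee+Jy_0^\vee)$ from Lemma~\ref{lem:5.3}. Three checks are needed: (i) $\partial_{tot}\zeta_k = 0$, a direct computation in which the cross-terms $(\partial x_i)(\cdots)$, $x_i(\partial y_j^\vee)$ and the $Q(1+\iota)$-contributions telescope and cancel mod $2$ exactly as in Lemma~\ref{lem:5.3}; (ii) after homogenization the total grading of $\zeta_k$ is $d_1-d_2+\min(P_0,P_1,\dots,P_k,Q_k)$, the minimum of the gradings of the cells used; and (iii) $\zeta_k$ represents an element of the $\dl$-tower. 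For (iii) I would use the mapping-cone exact sequence (as in Lemma~\ref{lem:5.3} and \cite[Proposition 4.6]{HMinvolutive}): the undecorated part of $\zeta_k$ is a $U$-nontorsion cycle, so, using the K\"unneth identification $U^{-1}H_*(S_1\otimes S_2^\vee) \cong \ff[U,U^{-1}]$ with generator $x_0(y_0^\vee+Jy_0^\vee)$ and the vanishing of $(1+\iota)_*$ on this tower, $\zeta_k$ projects to a generator of $U^{-1}H_*$ and hence neither it nor any $U$-power of it lies in $\operatorname{Im}(Q)$. Each $\zeta_k$ then certifies $\dl(S_1\otimes S_2^\vee) \ge d_1-d_2+\min(P_0,\dots,P_k,Q_k)$, and maximizing over $k$ proves the first inequality.

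The $\du$ bound is proved in parallel, with three modifications: the $U$-nontorsion seed is now itself $Q$-decorated (matching the $\du$-tower representative $Q\,x_0(y_0^\vee+Jy_0^\vee)$), so the undecorated part of the resulting cycle $\eta_k$ is $U$-torsion (indeed zero when $k=0$) while its $Q$-decorated part carries the $U$-nontorsion class; the ``horizontal'' steps of the ladder run through the cells $x_{i-1}(y_i^\vee+Jy_i^\vee)$, with gradings $R_i$, rather than through the $x_{i+1}(y_i^\vee+Jy_i^\vee)$; and one additionally checks, again from the cone exact sequence, that a sufficiently high $U$-power of $\eta_k$ lands in $\operatorname{Im}(Q)$. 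The grading of $\eta_k$ works out to $d_1-d_2+\min(R_1,\dots,R_k,P_k)$, with $P_k$ deleted when the ladder runs off the $x$-side (i.e.\ when $\min(m+1,n) = m+1$), and maximizing over $k$ gives the second inequality.

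The main obstacle will be pinning down the exact combinatorial shape of the staircases $\zeta_k$ and $\eta_k$ — which cells $x_i(y_j^\vee+Jy_j^\vee)$ to include, precisely where the boundary between the undecorated and $Q$-decorated parts falls, and how to apportion the homogenizing powers of $U$ — so that properties (i)--(iii) hold simultaneously; the delicate part is the interaction between the $\partial$-part of $\partial_{tot}$ (which forces the whole undecorated piece to be a genuine cycle and anchors it to the lower rungs of the ladder) and the $Q(1+\iota)$-part (which constrains the decorated piece), and the $\du$ case carries noticeably more bookkeeping than the $\dl$ case because of the asymmetric roles of $S_1$ and $S_2^\vee$. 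A final, essentially routine, point is to treat the degenerate endpoints carefully: when $k = \min(m,n)$ the ladder hits the edge of the grid and the terminal term ($Q_k$ for $\dl$, $P_k$ for $\du$) drops out, and when $m=0$ or $n=0$ the construction collapses to the seed and the bounds reduce to $\dl,\du \ge d_1-d_2$.
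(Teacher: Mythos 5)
Your strategy---exhibit explicit $\partial_{\operatorname{tot}}$-cycles in the mapping cone whose total gradings realize each term $\min(P_0,\dots,P_k,Q_k)$ (resp.\ $\min(R_1,\dots,R_k,P_k)$), verify they represent the $\dl$- (resp.\ $\du$-) tower, and take the max---is exactly what the paper does. Its cycles
\[
L_k = \Sigma^l_k + x_k(y_k^\vee+Jy_k^\vee) + Q\cdot x_{k+1}(y_k^\vee+Jy_k^\vee),
\qquad
\Sigma^l_k = \sum_{i=0}^{k-1}\bigl(x_iJy_i^\vee + Jx_iy_i^\vee\bigr),
\]
and the analogous $U_k$, are precisely the width-one ladders you sketch, with the $k=0$ seed matching your $U^{A_1/2}x_0(y_0^\vee+Jy_0^\vee) + Q\,x_1(y_0^\vee+Jy_0^\vee)$. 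But that is where the agreement stops: you explicitly defer ``pinning down the exact combinatorial shape of the staircases'' as the ``main obstacle,'' and that deferred obstacle \emph{is} the proof. Writing down $\Sigma^l_k$ and $\Sigma^u_k$ correctly, checking $\partial_{\operatorname{tot}}$-closedness (the cross-terms cancel mod $2$ only because of the precise alternation $x_iJy_i^\vee + Jx_iy_i^\vee$), computing the homogenized gradings, and handling the endpoint cases $k=\min(m,n)$ and $k=\min(m+1,n)$ (where $J x_m = x_m$ or $y_n^\vee = Jy_n^\vee$ alters the differential, so the terminal $Q_k$ or $P_k$ drops out) together constitute essentially all of the paper's argument. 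As written, your items (i)--(iii) are assertions rather than verifications.

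Your route for (iii) also differs from the paper's in a way that shifts rather than saves effort. The paper shows $L_k + L_{k+1} = \partial_{\operatorname{tot}}\bigl(x_{k+1}y_k^\vee + Q\cdot x_{k+2}Jy_k^\vee\bigr)$ explicitly, so each $L_k$ is homologous to the preferred representative $L_0$ from Lemma~\ref{lem:5.3}; no further appeal to the exact sequence is needed. You instead want to argue directly that the undecorated part of $\zeta_k$ is $U$-nontorsion and then invoke the cone exact sequence to conclude $\zeta_k\notin\operatorname{Im}(Q)$. The deduction is sound, but the premise---that $\Sigma^l_k + x_k(y_k^\vee+Jy_k^\vee)$ generates $U^{-1}H_*(S_1\otimes S_2^\vee)$---is not free: it requires comparable combinatorial bookkeeping in the skeleton, since this chain is not simply $x_0(y_0^\vee + Jy_0^\vee)$. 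Finally, a bookkeeping slip to reconcile: with the paper's conventions the mapping cone grading of a generator $x_{i_1}y_{i_2}^\vee$ is $i_1 - i_2 + \gr + 1$ (and a $Q$-decoration subtracts $1$), so the homogenized total grading of $L_k$ is $d_1 - d_2 + \min(P_0,\dots,P_k,Q_k) + 1$, not what you state; the $+1$ is cancelled against the $+1-2$ shift built into $\dl$ on $\Inv_\Q$, so the final bound is as claimed, but you would need to track this offset consistently for the gradings to close up.
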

\begin{proof}
By shifting gradings, it is clear that we may assume $d_1 = d_2 = 0$. For convenience, define
\[
\Sigma^u_k = \sum_{i = 0}^{k-1} \left(x_{i-1} Jy_i^\vee + Jx_{i-1} y_i^\vee\right)
\]
for $0 \leq k \leq \min(m+1, n)$, and
\[
\Sigma^l_k = \sum_{i = 0}^{k-1} \left(x_i Jy_i^\vee + Jx_i y_i^\vee\right)
\]
for $0 \leq k \leq \min(m, n)$. (Here we set $x_{-1}=0$.) Note that the total grading of an element $x_{i_1}y_{i_2}$ in the mapping cone of $S_1 \otimes S_2^\vee$ is given by
\[
i_1 - i_2 + \gr(x_{i_1}y_{i_2}) + 1 = i_1 - i_2 - \sum_{j=1}^{i_1} A_j + \sum_{j=1}^{i_2} B_j + 1,
\]
since the chain complex grading of $x_{i_1}y_{i_2}$ is $i_1 - i_2 + \gr(x_{i_1}y_{i_2})$. Similarly, the total grading of $Q \cdot x_{i_1}y_{i_2}$ is given by the above expression, minus one.

We begin with the desired lower bound for $\du$. To do this, we define a sequence of generators $U_k$ for all $0 \leq k \leq \min(m+1, n)$. There is some extra casework when $k = \min(m+1, n)$, so for the moment, let $0 \leq k < \min(m+1, n)$. Define
\[
U_k = \Sigma^u_k + x_{k-1}(y_k^\vee + Jy_k^\vee) + Q\cdot x_k(y_k^\vee + Jy_k^\vee).
\]
Note that $U_0$ is the generator of the $\du$-tower from Lemma~\ref{lem:5.3}. We claim that all of the $U_k$ are homologous to each other, and thus to $U_0$. To see this, let $0 \leq k < \min(m+1, n) - 1$. Then one can check that
\begin{align*}
U_k + U_{k+1} &= \left(x_{k-1} Jy_k^\vee + Jx_{k-1} y_k^\vee\right) + x_{k-1}(y_k^\vee + Jy_k^\vee) + x_{k}(y_{k+1}^\vee + Jy_{k+1}^\vee) \ + \\
&\ \ \ \ Q\cdot x_k(y_k^\vee + Jy_k^\vee) + Q\cdot x_{k+1}(y_{k+1}^\vee + Jy_{k+1}^\vee) \\
&= (x_{k-1} + Jx_{k-1})y_k^\vee + x_{k}(y_{k+1}^\vee + Jy_{k+1}^\vee) \ + \\
&\ \ \ \ Q\cdot x_k(y_k^\vee + Jy_k^\vee) + Q\cdot x_{k+1}(y_{k+1}^\vee + Jy_{k+1}^\vee) \\
&= \partial_{tot}(x_ky_k^\vee + Q \cdot x_{k+1}Jy_k^\vee),
\end{align*}
where in the first line we have canceled most of the terms in $\Sigma^u_{k+1}$ against $\Sigma^u_k$. We also define $U_k$ in the special case that $k = \min(m + 1, n)$, as follows. If $n < m + 1$, then we define $U_n$ by replacing all instances of $(y_k^\vee + Jy_k^\vee)$ in the usual definition of $U_k$ with $y_n^\vee$, so that
\[
U_n = \Sigma^u_n + x_{n-1}y_n^\vee + Q\cdot x_ny_n^\vee.
\]
This is simply to reflect the fact that $\partial y_{n-1}^\vee = y_n^\vee$, rather than $y_n^\vee + Jy_n^\vee$. It is easily checked that $U_n$ is still homologous to $U_{n-1}$ via essentially the same chain of equalities as above. If $m + 1 < n$, we instead define
\[
U_{m+1} = U_m + \partial_{tot}(x_my_m^\vee) = \Sigma^u_{m+1}+ x_m(y_{m+1}^\vee + Jy_{m+1}^\vee),
\]
where here we have used the fact that $Jx_m = x_m$. If $m + 1 = n$, then the $(y_{m+1}^\vee + Jy_{m+1}^\vee)$ in the expression for $U_{m+1}$ above should be replaced by $y_{m+1}^\vee$. This defines $U_k$ for all $0 \leq k \leq \min(m+1, n)$. The total gradings of the elements $U_k$ in the mapping cone are given by
\begin{align*}
&\text{total grading of }U_k = \min\{R_1, \ldots, R_k, P_k \} \text{ if } k \leq m, \text{ and} \\
&\text{total grading of }U_{m+1} = \min\{R_1, \ldots, R_{m+1} \} \text{ else}.
\end{align*}
This establishes the desired lower bound for $\du$.

We now turn to $\dl$. Let $0 \leq k < \min(m, n)$. Define
\[
L_k = \Sigma^l_k + x_k(y_k^\vee + Jy_k^\vee) + Q\cdot x_{k+1}(y_k^\vee + Jy_k^\vee).
\]
Note that $L_0$ is the generator of the $\dl$-tower from Lemma~\ref{lem:5.3}. We claim that all of the $L_k$ are homologous to each other, and thus to $L_0$. To see this, let $0 \leq k < \min(m, n) - 1$. Then
\begin{align*}
L_k + L_{k+1} &= (x_kJy_k^\vee + Jx_ky_k^\vee) + x_k(y_k^\vee + Jy_k^\vee) + x_{k+1}(y_{k+1}^\vee + Jy_{k+1}^\vee)\ + \\
&\ \ \ \ Q\cdot x_{k+1}(y_k^\vee + Jy_k^\vee) + Q\cdot x_{k+2}(y_{k+1}^\vee + Jy_{k+1}^\vee) \\
&= (x_k + Jx_k)y_k^\vee + x_{k+1}(y_{k+1}^\vee + Jy_{k+1}^\vee)\ + \\
&\ \ \ \ Q\cdot x_{k+1}(y_k^\vee + Jy_k^\vee) + Q\cdot x_{k+2}(y_{k+1}^\vee + Jy_{k+1}^\vee) \\
&= \partial_{tot}(x_{k+1}y_k^\vee + Q \cdot x_{k+2}Jy_k^\vee).
\end{align*}
We also define $L_k$ in the special case that $k = \min(m, n)$. As before, if $n < m$, we define $L_n$ by replacing all instances of $(y_k^\vee + Jy_k^\vee)$ with $y_n^\vee$:
\[
L_n = \Sigma^l_n + x_ny_n^\vee + Q\cdot x_{n+1}y_n^\vee.
\]
If $m < n$, we instead define
\[
L_m = L_{m-1} + \partial_{tot}(x_my_{m-1}^\vee) = \Sigma^l_m + x_m(y_m^\vee + Jy_m^\vee),
\]
where if $m = n$, we replace $y_m^\vee + Jy_m^\vee$ with $y_m^\vee$. The mapping cone gradings of the $L_k$ are given by
\begin{align*}
&\text{total grading of }L_k = \min\{P_0, \ldots, P_k, Q_k \} + 1 \text{ if } k < m, \text{ and} \\
&\text{total grading of }L_m = \min\{P_0, \ldots, P_m \} + 1 \text{ else}.
\end{align*}
This establishes the desired lower bound for $\dl$ and completes the proof.
\end{proof}

We now use the lower bound for $\du$ derived in Lemma~\ref{lem:5.4} to provide an upper bound for $\dl$ by interchanging the roles of $S_1$ and $S_2$. More precisely, since
\[
\dl(S_1 \otimes S_2^\vee) = - \du(S_1^\vee \otimes S_2),
\]
we obtain an upper bound for $\dl(S_1 \otimes S_2^\vee) $ by exchanging the roles of $S_1$ and $S_2$ in the lower bound for $\du(S_1 \otimes S_2^\vee)$ and multiplying through by $-1$. Swapping $A_i$ and $B_i$ sends $P_i$ to $-P_i$, $R_i$ to $-Q_{i-1}$, and interchanges $m$ and $n$. Applying this to the lower bound for $\du$ yields the expression 
\[
\min\{ P_0, \max(Q_0, P_1), \max(Q_0, Q_1, P_2), \ldots, \max(Q_0, \ldots, Q_{\min(m-1, n)}, P_{\min(m, n+1)}) \},
\]
with the understanding that if $\min(m, n+1) = n+1$, the final $P_{\min(m, n+1)}$ should be deleted. (Note that in the above expression, the $P_i$ and $Q_i$ are still are as defined in Lemma~\ref{lem:5.4} for $S_1 \otimes S_2^\vee$, and not the analogous definitions for $S_1^\vee \otimes S_2$.) We claim that this upper bound for $\dl(S_1 \otimes S_2^\vee)$ coincides with the lower bound derived in Lemma~\ref{lem:5.3}. This is a purely combinatorial lemma, which we prove below:
\begin{lemma}\label{lem:5.5}
Let $P_i$ and $Q_i$ be as in Theorem~\ref{thm:1.5}. Define
\[
S = \max \{ \min(P_0, Q_0), \min(P_0, P_1, Q_1), \ldots, \min(P_0, P_1, P_2, \ldots, P_{\min(m,n)}, Q_{\min(m,n)})\},
\]
where if $\min(m, n) = m$, we delete the final $Q_{\min(m,n)}$. Define
\[
T = \min\{ P_0, \max(Q_0, P_1), \max(Q_0, Q_1, P_2), \ldots, \max(Q_0, \ldots, Q_{\min(m-1, n)}, P_{\min(m, n+1)}) \},
\]
where if $\min(m, n+1) = n+1$, we delete the final $P_{\min(m, n+1)}$. Then $T \leq S$.
\end{lemma}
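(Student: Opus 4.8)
The plan is to recast the inequality in terms of an auxiliary zigzag sequence and then to exhibit a single term of the maximum defining $S$ which already dominates $T$.

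First I would record the monotonicity hidden in the definitions. From the formulas for $P_i$ and $Q_i$ one has $P_0 = 0$ and, wherever both quantities are defined, $Q_i = P_i - 2s_{i+1}$ and $P_{i+1} = Q_i + 2t_{i+1}$ with $s_{i+1}, t_{i+1} \geq 1$. Hence the interleaved sequence
\[
P_0 \;\geq\; Q_0 \;\leq\; P_1 \;\geq\; Q_1 \;\leq\; P_2 \;\geq\; \cdots
\]
is a zigzag. Introduce the prefix extrema $H_j = \min(P_0, \ldots, P_j)$, nonincreasing in $j$ (with $H_{-1} = +\infty$), and $L_k = \max(Q_0, \ldots, Q_{k-1})$, nondecreasing in $k$ (with $L_0 = -\infty$). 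Using $Q_j \leq P_j$, the generic term of $S$ becomes $S_j = \min(P_0, \ldots, P_j, Q_j) = \min(H_{j-1}, Q_j)$, while $T_k = \max(Q_0, \ldots, Q_{k-1}, P_k) = \max(L_k, P_k)$ directly; in the one anomalous term of each expression the last entry ($Q_j$ in $S$, $P_k$ in $T$) is deleted, so there $S_j = H_j$ and $T_k = L_k$ respectively. This step is only bookkeeping.

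Next I would locate the index realizing the inner minimum $T = \min_k T_k$. Let $k^*$ be the least $k \geq 1$ such that $P_k$ appears in the expression for $T$ and $P_k \leq L_k$. For $0 \leq k < k^*$ one has $T_k = P_k$ (immediate for $k = 0$, and from minimality of $k^*$ when $k \geq 1$); for $k \geq k^*$ we have $T_k \geq L_k \geq L_{k^*} = T_{k^*}$ since $L$ is nondecreasing. Therefore
\[
T \;=\; \min(P_0, \ldots, P_{k^*-1}, L_{k^*}) \;=\; \min(H_{k^*-1}, L_{k^*}).
\]
If no such $k$ exists there are two subcases. When $m > n$ the final term of $T$ carries no $P$-part and equals $L_{n+1}$, so setting $k^* = n+1$ the same identity $T = \min(H_n, L_{n+1})$ holds. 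When $m \leq n$ every $T_k$ equals $P_k$, whence $T = \min(P_0, \ldots, P_m)$, which is precisely the final term $S_{\min(m,n)}$ of $S$ (its $Q_m$ having been deleted); thus $T \leq S$ already, and this subcase is finished.

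Finally, in the principal case I would choose $p$ with $0 \leq p \leq k^*-1$ and $Q_p = L_{k^*}$. A short check against the index ranges in the definition of $S$ shows that $S_p = \min(P_0, \ldots, P_p, Q_p)$ is a genuine term of $S$; in particular $Q_p$ is not the one deleted entry, because $p \leq k^* - 1$ lies strictly below $\min(m,n)$ in each of the cases considered. Since $\{P_0, \ldots, P_p\}$ is a subset of $\{P_0, \ldots, P_{k^*-1}\}$, we have $H_p \geq H_{k^*-1}$, and therefore
\[
S \;\geq\; S_p \;=\; \min(H_p, Q_p) \;\geq\; \min(H_{k^*-1}, L_{k^*}) \;=\; T,
\]
as required. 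The step I expect to be most delicate is not any of these manipulations but the bookkeeping around the two deleted entries, the conventions $H_{-1} = +\infty$ and $L_0 = -\infty$, and the degenerate cases (no $k^*$, or $m$ or $n$ equal to zero, or $p$ at a boundary index): one has to be careful that every quantity invoked — above all the term $S_p$ used in the last display — really occurs in the expressions as written.
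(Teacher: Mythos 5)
Your proof is correct and follows the same core strategy as the paper: locate the term of $T$ achieving the minimum, then exhibit a term of $S$ dominating it. The packaging is cleaner, though. You introduce the prefix extrema $H_j$ and $L_k$ and reduce directly to the identity $T = \min(H_{k^*-1}, L_{k^*})$, after which a single choice of $p$ with $Q_p = L_{k^*}$ closes the argument. The paper instead splits on whether $T$ equals some $Q_i$ or some $P_k$, and the $P_k$ branch requires a further sub-split; your use of $L_{k^*}$ absorbs both branches at once, since $\min(H_p, Q_p) \geq \min(H_{k^*-1}, L_{k^*})$ holds regardless of which of the two quantities realizes $T$.

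One small imprecision, which you flag as a worry and which is worth fixing: in justifying that $Q_p$ is never the deleted entry you say that $p \leq k^*-1$ ``lies strictly below $\min(m,n)$ in each of the cases considered.'' In the subcase $m > n$ with no eligible $k^*$ one sets $k^* = n+1$, and the maximizer $p$ of $L_{n+1} = \max(Q_0,\ldots,Q_n)$ can be $p = n = \min(m,n)$, so the inequality is not strict there. The conclusion is nevertheless correct: $Q_{\min(m,n)}$ is only deleted when $\min(m,n) = m$, which is impossible when $m > n$. So the correct justification is that either $p < \min(m,n)$, or $p = \min(m,n) = n < m$ and the deletion condition $\min(m,n)=m$ fails; the statement as written should be adjusted accordingly.
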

\begin{proof}
We proceed by casework on the value of $T$. Let $k$ be the least index for which $T$ is equal to the term
\[
T = \max(Q_0, \ldots, Q_{k-1}, P_k),
\]
where as usual the $P_k$ in the above expression may be deleted if $k = \min(m, n+1)$. (That is, we consider the first term in the expression for $T$ which achieves the desired minimum.) Suppose moreover that
\[
T = \max(Q_0, \ldots, Q_{k-1}, P_k) = Q_i
\]
for some $0 \leq i < k$. We claim that then $Q_i < P_j$ for all $j \leq i$. Indeed, by minimality of $k$, we have the strict inequality 
\[
Q_i < \max(Q_0, \ldots, Q_{j-1}, P_j)
\]
for all such $j$. Since we already know that $Q_0, \ldots, Q_{k-1}$ are bounded above by $Q_i$, this implies the claim. In particular, we thus see that 
\[
\min(P_0, \ldots, P_i, Q_i) = Q_i.
\]
Since this term appears in the above expression for $S$, we conclude that $T = Q_i \leq S$, as desired. Note that $i < k \leq m$, so there is no problem with the possible deletion of $Q_{\min(m,n)}$.

Now suppose instead that
\[
T = \max(Q_0, \ldots, Q_{k-1}, P_k) = P_k.
\]
Note that implicitly this means $k \leq n$. We claim that $P_k < P_j$ for all $j < k$. Indeed, as before, we have
\[
P_k < \max(Q_0, \ldots, Q_{j-1}, P_j) 
\]
for all $j < k$, so the fact that $Q_0, \ldots, Q_{k-1}$ are bounded above by $P_k$ implies the claim. In addition, for all $r > k$, we have 
\[
P_k \leq \max(Q_0, \ldots, Q_{r-1}, P_r)
\]
and hence
\[
P_k \leq \max(Q_k, \ldots, Q_{r-1}, P_r),
\]
again by the fact that $Q_0, \ldots, Q_{k-1}$ are bounded above by $P_k$. (As usual, we delete $P_r$ from the above expression if $r = \min(m, n+1)= n+1$.) There are now two subcases. First, suppose that for all $r > k$, the above maximum is given by $P_r$. Then we have that $P_k \leq P_j$ for all possible values of $j$. Note that implicitly in this case $P_{\min(m, n+1)}$ is not deleted, so we know that $m < n + 1$. Hence $\min(m, n) = m$, and the final term in the expression for $S$ is given by
\[
\min(P_0, P_1, \ldots, P_m) = P_k.
\]
This shows that $T \leq S$, as desired.

In the second subcase, let $r > k$ be the least index such that 
\[
\max(Q_k, \ldots, Q_{r-1}, P_r) = Q_s
\]
for some $k \leq s < r$. (That is, we consider the first value of $r > k$ for which the expression $\max(Q_k, \ldots, Q_{r-1}, P_r)$ is equal to some $Q_s$, rather than $P_r$.) Then $P_k \leq P_j$ for all $j < r$, and also $P_k \leq Q_s$. Thus
\[
\min(P_0, \ldots, P_s, Q_s) = P_k.
\]
Since this term appears in the expression for $S$, we conclude that $T \leq S$. This completes the proof.
\end{proof}

Putting everything together, we thus obtain:
\begin{proof}[Proof of Theorem~\ref{thm:1.5}]
According to Lemma \ref{lem:5.1}, we need only calculate the $\dl$-invariant of $S(0,m; 2s_1,\dots,2s_m)\otimes S(0,n; 2t_1,\dots,2t_n)^\vee$.  We obtain lower bounds for $\dl$ of the tensor product from Lemma~\ref{lem:5.4}, as well as an upper bound using duality applied to the lower bound for $\du$ in the same lemma.  Lemma~\ref{lem:5.5} shows that the upper and lower bounds are equal, giving the calculation of $\dl$ as in Theorem \ref{thm:1.5}. 
\end{proof}
\noindent
For the purposes of distinguishing linear combinations of AR manifolds, Theorem~\ref{thm:1.5} is of course overshadowed by the explicit decomposition afforded by Theorem~\ref{thm:1.2}. However, since $\du$ and $\dl$ are defined for all rational homology spheres, Theorem~\ref{thm:1.5} can be used to compare linear combinations of AR manifolds with other rational homology spheres whose involutive Floer correction terms have been computed by different methods. 


\section{Examples and Applications}
\label{sec:6}
In this section, we give some examples and applications of Theorems~\ref{thm:1.2} and \ref{thm:1.5}. We begin by illustrating the basis decomposition into $Y_i$ afforded by Theorem~\ref{thm:1.2}:

\begin{example}\label{ex:ex1}
Let $Y = \Sigma(5,8,13)$. The graded root associated to $h(Y)$ can be computed using the numerical semigroups algorithm of Can and Karakurt \cite{Can}, and the relevant monotone subroot can then be extracted by using the procedure described in \cite[Section 6]{DM}. We have drawn part of the graded root associated to $h(Y)$ on the left in Figure~\ref{fig:21}; the monotone subroot is displayed on the right. (The full graded root corresponding to $h(Y)$ has several other length-one branches occurring in lower gradings which we have suppressed for brevity.) Applying Theorem~\ref{thm:4.2}, we see that
\begin{align*}
h(\Sigma(5, 8, 13)) &= M(4, 0; 2, 2) \\
&= M(4, 0) + M(2, 2) - M(2, 0) \\
&= (Y_2 - Y_1)[-2].
\end{align*}
\begin{figure}[h!]
\center
\includegraphics[scale=1]{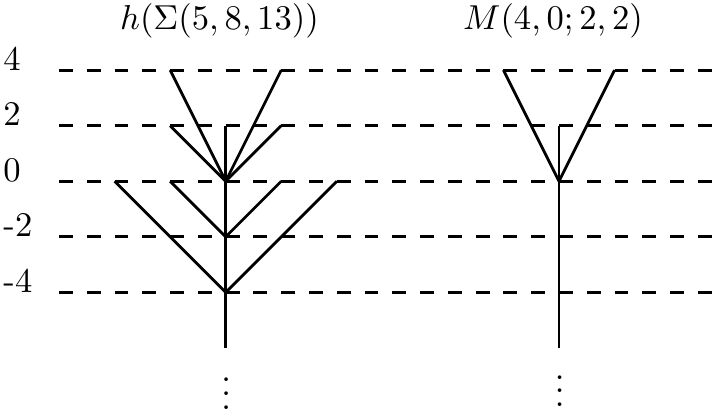}
\caption{Graded root and monotone subroot for $h(\Sigma(5,8,13))$. See also \cite{Stoffregen}.} \label{fig:21}
\end{figure}
\end{example}

\begin{example}\label{ex:ex2}
Let $Y = \Sigma(13,21,34)$. We have drawn part of the graded root associated to $h(Y)$ on the left in Figure~\ref{fig:22}; the monotone subroot is displayed on the right. (The full graded root is significantly more complicated, but we have displayed only the portion which is relevant for computing the local equivalence class.) Applying Theorem~\ref{thm:4.2}, we see that
\begin{align*}
h(\Sigma(13, 21, 34)) &= M(12, 0; 10, 2) \\
&= M(12, 0) + M(10, 2) - M(10, 0) \\
&= (Y_6 + Y_4 - Y_5)[-2].
\end{align*}
\begin{figure}[h!]
\center
\includegraphics[scale=1]{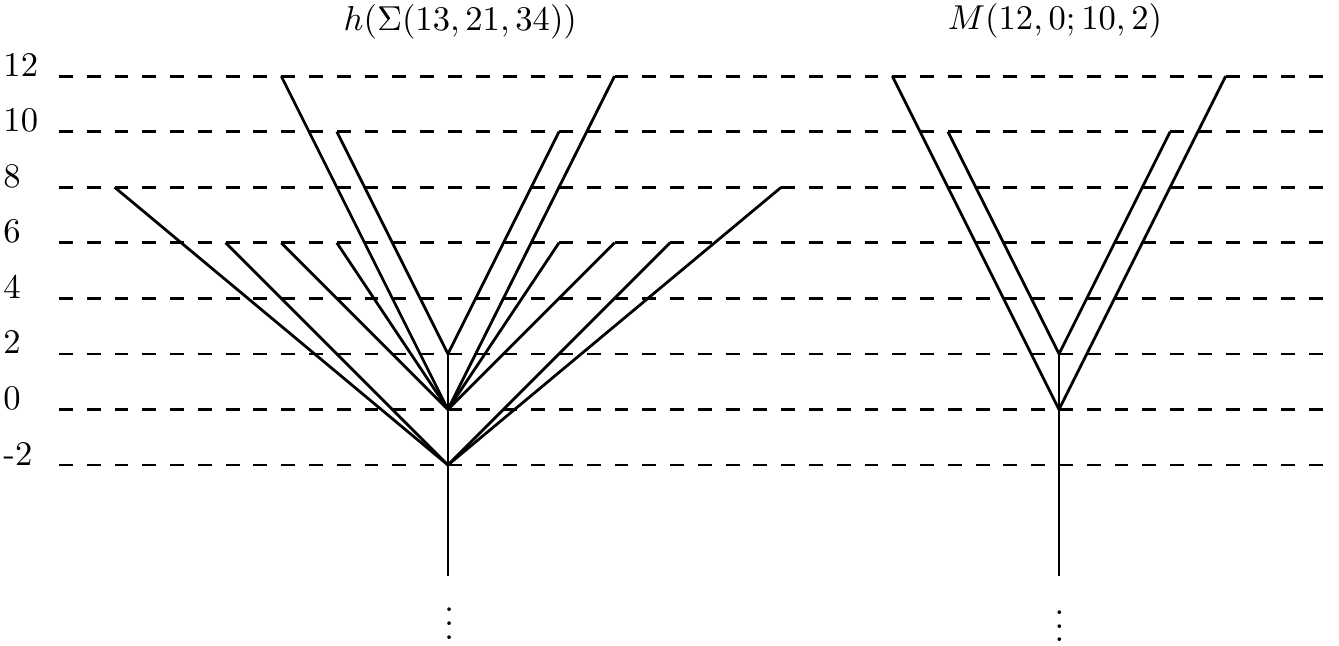}
\caption{Graded root and monotone subroot for $h(\Sigma(13,21,34))$.} \label{fig:22}
\end{figure}
\end{example}

As evidenced by the above examples, most ``small" Brieskorn spheres have a fairly simple decomposition into the $Y_i$. Indeed, several familiar classes of Brieskorn spheres are actually of projective type, as discussed in \cite{Stoffregen}. However, it appears that arbitrarily high complexity can be achieved by considering Brieskorn spheres with sufficiently large parameters; for example, using parameters on the order of $100\sim500$, one can construct examples that decompose into a linear combination of upwards of twenty basis elements. (These, as well as Example~\ref{ex:ex2} above, are due to Duncan McCoy.)

We now turn to the proof of Corollaries~\ref{cor:1.6} and \ref{cor:1.7}:

\begin{proof}[Proof of Corollary~\ref{cor:1.6}]
Let $h(Y, \s)$ be as in the statement of the corollary. Putting $\#_k(Y, \s)$ into the statement of Theorem~\ref{thm:1.5}, observe that for $0 \leq i \leq k$, we then have
\[
P_i = 2i(t_1 - s_1),
\]
and that for $0 \leq i < k$, we have
\[
Q_i = 2i(t_1 - s_1) - 2s_1.
\]
Now assume $t_1 > s_1$. Suppose $k$ is large enough so that
\[
Q_{k-1} = 2(k-1)(t_1 - s_1) - 2s_1 \geq 0.
\]
Since $P_1, \ldots, P_{k-1}$ are all also greater than zero, this means that we have
\[
\min(P_0, P_1, \ldots, P_{k-1}, Q_{k-1}) = P_0 = 0.
\]
Every other term appearing inside the $\max$ in the expression for $\dl$ is certainly bounded above by zero, since $P_0 = 0$. Hence $\dl(\#_k(Y, \s)) = k \cdot d(Y, \s)$.

Now assume $t_1 < s_1$. Suppose $k$ is large enough so that
\[
P_k = 2k(t_1 - s_1) \leq -2s_1.
\]
We claim that every term appearing inside the $\max$ in the expression for $\dl$ is bounded above by $Q_0 = - 2s_1$. To see this, observe that $Q_i \leq Q_0$ for all $0 \leq i < k$. Since every such term in the expression for $\dl$ either contains such a $Q_i$ or contains $P_k$, this implies the bound. Hence $\dl(\#_k(Y, \s)) = k \cdot d(Y, \s) - 2s_1$. Considering $-Y$ instead of $Y$ establishes the theorem for $\du$. 
\end{proof}

\begin{proof}[Proof of Corollary~\ref{cor:1.7}] For simplicity, we first apply an overall grading shift so that $\bar{\mu} = 0$. It then suffices to exhibit distinct linear combinations of basis elements $Y_i$ with the invariants $d, \du,$ and $\dl$. Let 
\begin{align*}
\du - d &= 2M \text{ and}\\
d - \dl &= 2N,
\end{align*}
with the understanding that $M$ and $N$ are not both zero. In order to give some intuition for the proof, we begin by considering the ansatz
\[
\Sigma = Y_a - Y_b - Y_c - n Y_1,
\]
for non-negative integers $a \geq b \geq c \geq 1$ and non-negative parameter $n$. Applying Theorem~\ref{thm:1.5}, we have
\begin{align*}
\dl(\Sigma) &= d(\Sigma) + \max\{ \min(0, -2a), \min(0, 2(b - a)) \} \\
&= d(\Sigma) - 2(a-b).
\end{align*}
Computing $\du$, we have
\begin{align*}
\dl(-\Sigma) &= d(-\Sigma) + \max\{ \min(0, -2b), \min(0, 2(a-b), 2(a-b-c)) \} \\
&= d(-\Sigma) + \min(0, 2(a-b-c)),
\end{align*} 
which shows that $\du(\Sigma) = d(\Sigma) + \max(0, 2(-a + b + c))$. Note that the differences $\du(\Sigma) - d(\Sigma)$ and $d(\Sigma) - \dl(\Sigma)$ are independent of $n$. The system
\begin{align*}
2(-a + b + c) &= 2M \\
2(a-b) &= 2N
\end{align*}
is solved by
\begin{align*}
c &= M + N \\
b &= M + N + k \\
a &= M + 2N + k
\end{align*}
for any $k \geq 0$. (Note that $b + c \geq a$.) Linear combinations $\Sigma$ with these parameters thus yield the desired differences $\du - d$ and $d - \dl$. Moreover, we have 
\[
d(\Sigma) = 2(a - b - c - n) = -2M - 2n. 
\]
Hence if $d \leq -2M$, then there is some fixed $n \geq 0$ for which the one-parameter family
\[
\Sigma_{n,k} = Y_{M+2N+k} - Y_{M+N+k} - Y_{M+N} - n Y_1
\]
has the desired invariants for all $k \geq 0$. In the case that $d > -2M$, we construct the slightly more complicated ansatz
\[
\Sigma_{n,k}' = Y_{M + 2N + 1 + k} - Y_{M + N + 1 + k} - Y_{M + N + 1} + 2Y_1 + nY_1
\]
with $n \geq 0$ and $k \geq 0$. Then
\[
d(\Sigma_{n,k}') = -2M + 2 + 2n.
\]
Applying Theorem~\ref{thm:1.5}, we obtain
\begin{align*}
\dl(\Sigma_{n,k}') = d(\Sigma_{n,k}') + \max\{ &\min(0, -2(M + 2N + 1+ k)), \\
&\min(0, -2N, -2N - 2), \\
&\min(0, -2N, 2M, 2M - 2) \} \\
&\hspace{-3.08cm}= d(\Sigma_{n,k}') - 2N.
\end{align*}
Similarly,
\begin{align*}
\dl(-\Sigma_{n,k}') = d(-\Sigma_{n,k}') + \max\{ &\min(0, -2(M + N + 1+ k)), \\
&\min(0, 2N, -2M - 2), \\
&\min(0, 2N, -2M) \} \\
&\hspace{-3.38cm}= d(-\Sigma_{n,k}') - 2M,
\end{align*}
which shows that $\du(\Sigma_{n, k}') = d(\Sigma_{n, k}') + 2M$. Hence if $d > -2M$, then the appropriate choice of $n \geq 0$ again yields a one-parameter family of examples with the desired $d, \du,$ and $\dl$.
\end{proof}

\end{document}